\documentclass[11pt]{amsart}
\usepackage{geometry}
\usepackage{amsfonts}
\usepackage{amsmath}
\usepackage{amssymb}
\usepackage{amsthm}
\usepackage{mathtools}
\usepackage{enumitem}
\usepackage{hyperref}
\usepackage{url}
\usepackage{color,xcolor}
\usepackage[all]{xy}
\usepackage{bbm}
\usepackage{tikz}

\newtheorem{theorem}{Theorem}[section]
\numberwithin{equation}{section}
\newtheorem{lemma}[theorem]{Lemma}
\newtheorem{definition}[theorem]{Definition}
\newtheorem{notion}[theorem]{}

\newtheorem{proposition}[theorem]{Proposition}
\newtheorem{corollary}[theorem]{Corollary}
\theoremstyle{definition}
\newtheorem{remark}[theorem]{Remark}

\newcommand{\K}{\mathrm{K}}

\newcommand{\Aut}{\operatorname{Aut}}

\newcommand{\Prim}{\operatorname{Prim}}

\newcommand{\id}{\operatorname{id}}

\title[The Necessity of Coefficient Transformations]
{Bockstein operations and AD algebras with  unbounded torsion in ${\rm K}_1$}

\author{Qingnan An}
\address{School of Mathematics and Statistics, Northeast Normal University, Changchun, {\rm130024}, China}
\email{qingnanan1024@outlook.com}

\author{Zhichao Liu}
\address{School of Mathematical Sciences,
Dalian University of Technology,
Dalian, {\rm 116024}, China }
\email{lzc.12@outlook.com}

\author{Xin Ma}
\address{Institute for Advanced Study in Mathematics, Harbin Institute of Technology, Harbin, China, {\rm 150001}}
\email{xma17@hit.edu.cn}

\subjclass[2020]{Primary 46L35; Secondary 19K35, 46L80}

\keywords{AD algebra,
Total K-theory,
Bockstein operations, Classification}

\begin{document}

\begin{abstract}
Eilers showed that for AD algebras of real rank zero with bounded torsion in $\mathrm{K}_1$, the coefficient transformations $\kappa$ are redundant in the classification by ordered scaled total $K$-theory. In this paper we treat the unbounded torsion case and prove that, in contrast, $\kappa$ becomes necessary. Specifically, we construct two non-isomorphic unital AD algebras of real rank zero, $E_0$ and $E_1$, such that their ordered scaled total $K$-theory invariants agree when the $\kappa$-maps are forgotten, i.e.,
\[ \bigl( \underline{\mathrm{K}}(E_0), \underline{\mathrm{K}}(E_0)_+, [1_{E_0}] \bigr)_{\underline{\mathrm{K}}_{\langle\kappa\rangle}} \cong \bigl( \underline{\mathrm{K}}(E_1), \underline{\mathrm{K}}(E_1)_+, [1_{E_1}] \bigr)_{\underline{\mathrm{K}}_{\langle\kappa\rangle}} \]
but are not isomorphic under the full $\Lambda$-module structure:
\[ \bigl( \underline{\mathrm{K}}(E_0), \underline{\mathrm{K}}(E_0)_+, [1_{E_0}] \bigr)_{\Lambda} \not\cong \bigl( \underline{\mathrm{K}}(E_1), \underline{\mathrm{K}}(E_1)_+, [1_{E_1}] \bigr)_{\Lambda} .\]
This completes the picture for the necessity of all three operations $\rho$, $\beta$, and $\kappa$ in this context.
\end{abstract}

\maketitle

\section{Introduction}

The classification of nuclear $C^*$-algebras by K-theoretic invariants requires more than the ordered ordinary K-groups alone. For large classes of  non-simple $C^*$-algebras, including AD algebras of real rank zero, the appropriate invariant is the total K-theory:
$$
 \underline{\rm K}(A)
 =
 \K_0(A)\oplus\K_1(A)
 \oplus
 \bigoplus_{n\geq 2}
 \left(
   \K_*(A;\mathbb Z_n)
   \oplus
   \K_1(A;\mathbb Z_n)
 \right),
$$
This invariant contains the natural coefficient transformations
$$
 \rho_n^i:
 \K_i(A)\longrightarrow\K_i(A;\mathbb Z_n),\quad
  \kappa_{m,n}^i:
 \K_i(A;\mathbb Z_n)
 \longrightarrow
 \K_i(A;\mathbb Z_{m})
$$
and the Bockstein maps
$$
 \beta_n^i:
 \K_i(A;\mathbb Z_n)\longrightarrow\K_{i+1}(A).
$$

The category of all these maps and their compositions is denoted by $\Lambda$. The total ${\rm K}$-theory  also encodes an order structure. If $I$ is an ideal of $A$, the image
$\underline{\mathrm{K}}(I\mid A)$ of $\underline{\mathrm{K}}(I)$ in $\underline{\mathrm{K}}(A)$
carries information about the position of $I$ inside $A$.
To capture the order structure coming from projections, Dadarlat and Gong
\cite[Definition 4.6]{DadarlatGong1997} introduced a positive cone
$\underline{\mathrm{K}}(A)_+$. For a separable ${\rm C}^*$-algebra of stable rank one, denote by
\begin{align*}
\underline{\mathrm{K}}(A)_+ = \Bigl\{ \bigl([e],\mathfrak{u},\bigoplus_{n\ge1}(\mathfrak{s}_{n,0},\mathfrak{s}_{n,1})\bigr) \Bigm|
& [e]\in\mathrm{K}_0^+(A), \\
\bigl([e],\mathfrak{u},\bigoplus_{n\ge1}
(\mathfrak{s}_{n,0},\mathfrak{s}_{n,1})\bigr)
&\in\underline{\mathrm{K}}(I_e\mid A)
\Bigr\},
\end{align*}
where $I_e$ is the ideal generated by a projection representing $[e]$ and
$\underline{\mathrm{K}}(I_e\mid A)$ is the image of $\underline{\mathrm{K}}(I_e)$ in
$\underline{\mathrm{K}}(A)$. (In particular, we may also denote
$\mathrm{K}_*^+(A):=\mathrm{K}_*(A)\cap \underline{\mathrm{K}}(A)_+,$
where $\mathrm{K}_*(A)$ is identified with its natural image in $\underline{\mathrm{K}}(A)$.)

\begin{definition}\rm The total K-theory for a ${\rm C}^*$-algebra $A$ as an invariant is the triple $(\underline{\mathrm{K}}(A),\underline{\mathrm{K}}(A)_+,\Sigma A)$
consisting of the total ${\rm K}$-theory group, its positive cone, and the scale $\Sigma A$
(the image of the set of projections in $A$ under the natural map into
$\mathrm{K}_0(A)$). If $A$ is unital, we will take the distinguished element
$[1_A]$ instead of  $\Sigma A$.
\end{definition}

Total $K$-theory has proven to provide a more effective description of $C^*$-algebras of real rank zero. Eilers (\cite{Eiphd, Eilers1996}) showed that $\underline{\mathrm{K}}(A)$ serves as a complete invariant for AD algebras with real rank zero and bounded torsion in $\mathrm{K}_1$. Subsequently, Dadarlat and Gong \cite{DadarlatGong1997} established an important classification for AHD algebras of real rank zero. This naturally raises the question of whether all operations in the $\Lambda$-module structure---namely, $\rho$, $\beta$, and $\kappa$---are necessary invariants.

Regarding this question, the necessity of the Bockstein map $\beta$ was established by Dadarlat and Eilers in \cite{DadarlatEilers1999}. They constructed two non-isomorphic AD algebras of real rank zero that become indistinguishable when the operation $\beta$ is omitted. More recently, the necessity of $\rho$ was demonstrated in \cite{AL}, following a similar approach.

As for the remaining operation $\kappa$, Eilers \cite{Eilers1996} established that, for AD algebras of real rank zero with bounded torsion $K_1$-groups, the coefficient transformations $\kappa$ are redundant in the classification. Whether this redundancy persists for algebras with unbounded torsion $K_1$-groups---or in broader settings such as AHD algebras---has remained an open (see \cite{Eiphd, Eilers1996}). The present paper settles this issue definitively. In stark contrast to the bounded torsion case, we prove that for AD algebras of real rank zero with unbounded torsion $K_1$-groups, the Bockstein map $\kappa$ is in fact necessary. This result reveals a fundamental dichotomy between the two torsion regimes, which further indicates the indispensable role of $\kappa$ in the general classification theory.

\begin{definition}\rm
Denote the resulting invariant $\underline{\rm K}_{\langle\kappa\rangle}(-)$ if one deletes all the $\kappa$ maps from total K-theory $\underline{\rm K}(-)$ with Bockstein operations.
We say a map $\eta$ is a \emph{$\Lambda$-isomorphism} if  $\eta$ is a group graded isomorphism and preserves the Bockstein operations $\rho,\kappa,\beta$; 
we say a map $\eta$ is a \emph{$\underline{\rm K}_{\langle\kappa\rangle}$-isomorphism} if  $\eta$ is a  graded group isomorphism and preserves the Bockstein operations $\beta,\rho$, 
but not necessarily preserves the map $\kappa$. 

Let $A, B$ be ${\rm C}^*$-algebras. An \emph{ordered scaled $\star$-isomorphism} (for $\star=\Lambda,$ $ \underline{\mathrm{K}}_{\langle\kappa\rangle}$)
$$
(\underline{\rm K}(A),\underline{\rm K}(A)_+,\Sigma A)_{\star}
\cong (\underline{\rm K}(B),\underline{\rm K}(B)_+,\Sigma B)_\star
$$
is a graded group isomorphism that preserves
the positive cone, the scale (and the unit if present), and the relevant operations:
all of $\Lambda$, or all except the $\kappa$ maps,
respectively.

\end{definition}
Our main result is the following.

\begin{theorem}\label{thm:main}
There exist two non-isomorphic unital  AD algebras of real rank zero $E_0$ and
$E_1$ such that
$$
(\underline{\rm K}(E_0),\underline{\rm K}(E_0)_+,[1_{E_0}])_{\underline{\mathrm{K}}_{\langle\kappa\rangle}}
\cong (\underline{\rm K}(E_1),\underline{\rm K}(E_1)_+,[1_{E_1}])_{\underline{\mathrm{K}}_{\langle\kappa\rangle}},
$$
while
$$
(\underline{\rm K}(E_0),\underline{\rm K}(E_0)_+,[1_{E_0}])_{\Lambda}
\ncong (\underline{\rm K}(E_1),\underline{\rm K}(E_1)_+,[1_{E_1}])_{\Lambda}.
$$  
\end{theorem}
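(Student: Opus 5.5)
We follow the strategy of Dadarlat--Eilers \cite{DadarlatEilers1999} and \cite{AL}. We build $E_0$ and $E_1$ as extensions (or inductive limits of dimension-drop building blocks $\widetilde{\mathbb I}_n$, i.e.\ dimension drop interval algebras, together with matrix algebras), so that they are AD algebras of real rank zero. The two share the same ideal $I$ and the same quotient $B$, and differ only in a gluing datum $\theta_0,\theta_1$. Concretely, let $B$ be an AD algebra of real rank zero with $\K_1(B)=\bigoplus_{k}\mathbb Z_{p^k}$ (unbounded torsion, all $p$-primary for a fixed prime $p$) and $\K_0(B)$ a suitable dimension group. Let $I$ be, say, a stable AF or AD ideal with $\K_0(I)$ chosen so that the extension's index/exponential maps are injective on the relevant torsion. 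The extension $0\to I\to E_j\to B\to 0$ is determined by $\mathrm{KK}$-class data. Because of unbounded torsion, $\mathrm{Ext}(\K_1(B),\K_0(I))$ involves $\varprojlim{}^1$-type phenomena, namely $\mathrm{Pext}$ elements. These are invisible to the $\mathbb Z_n$-coefficient groups taken one $n$ at a time, but visible through the compatibility maps $\kappa_{m,n}$. We choose $\theta_0=0$ (split) and $\theta_1$ a nonzero element of $\mathrm{Pext}(\K_1(B),\K_0(I))$. Real rank zero follows by arranging the boundary map $\K_0(B)\to\K_1(I)$ to vanish and using Brown--Pedersen/Lin--R{\o}rdam extension results. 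The AD structure follows by writing the extension as an inductive limit of finite-stage extensions of building blocks, using the local splitting of pure extensions.

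\textbf{Step 1 (forgetting $\kappa$).} We compute $\underline{\K}(E_j)$ via the six-term sequences with coefficients. Since $\theta_1$ is pure, each $\mathbb Z_n$-sequence splits: for every $n$, $\K_*(E_1;\mathbb Z_n)\cong \K_*(E_0;\mathbb Z_n)$ compatibly with $\rho_n$ and $\beta_n$. The point is that a pure extension becomes split after tensoring with each $\mathbb Z_n$ and restricting to each finite subgroup. We then choose these isomorphisms \emph{independently} for each $n$, taking a different splitting for each $n$. This yields a graded isomorphism preserving $\rho,\beta$, unit and order. For the order, we use the explicit description of $\underline{\K}(E)_+$ via $\underline{\K}(I_e\mid E)$. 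The ideals of $E_j$ correspond to those of $I$ and $B$ in the same way for $j=0,1$, and the per-$n$ isomorphisms can be chosen to respect the images $\underline{\K}(J\mid E_j)$.

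\textbf{Step 2 (with $\kappa$).} Suppose a $\Lambda$-isomorphism $\eta$ existed. We use that it preserves the order, hence maps $\underline{\K}(I\mid E_0)$ onto $\underline{\K}(I\mid E_1)$ (recovering $I$ as the ideal generated by a canonical positive class). It therefore induces a $\Lambda$-morphism of the extensions of $\Lambda$-modules $0\to\underline{\K}(I)\to\underline{\K}(E_j)\to\underline{\K}(B)\to0$. Compatibility with all $\kappa_{m,n}$ forces the family of $\mathbb Z_n$-splittings to be coherent, i.e.\ to assemble into an element of $\varprojlim$ over $n$. Coherent splittings for $E_1$ would yield a splitting of the pure extension class modulo automorphisms of $\K_*(I)$ and $\K_*(B)$. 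We choose $\theta_1$ so that it is not in the orbit of $0$ under these automorphisms; since $0$ is fixed by every automorphism, it suffices that $\theta_1\ne0$. This contradiction shows the $\Lambda$-invariants differ, and hence $E_0\not\cong E_1$.

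\textbf{Main obstacle.} The hardest part is making Step 2 rigorous: showing that an abstract $\Lambda$-isomorphism must respect the ideal $I$ and hence induce a morphism of extensions. Equivalently, $\underline{\K}(I\mid E_j)$ must be intrinsically determined by the ordered invariant. We would try to arrange $I$ to be the unique largest ideal of the relevant type, for instance the only ideal with nonzero $\K_0$ that is order-essential. A second difficulty is realizing the pure class $\theta_1$ concretely by AD connecting maps while keeping real rank zero. Here we would use the dimension-drop building blocks with connecting maps that twist by $p^k$-multiples growing along the sequence.
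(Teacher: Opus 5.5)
There is a genuine gap; in fact there are two.

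First, the class $\theta_1$ you need does not exist as stated. $\K_1(B)=\bigoplus_k\mathbb Z_{p^k}$ is a direct sum of cyclic groups, hence pure-projective, so $\operatorname{Pext}(\K_1(B),H)=0$ for every $H$. The degree is also off. When the boundary maps vanish, the extension data are the group extensions $0\to\K_i(I)\to\K_i(E)\to\K_i(B)\to0$, i.e.\ classes in $\operatorname{Ext}(\K_i(B),\K_i(I))$, not in $\operatorname{Ext}(\K_1(B),\K_0(I))$.

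Second, and more fundamentally, suppose you choose $\K_*(B)$ so that a nonzero pure class does exist. It is then already visible in ordinary K-theory: $0\to\K_*(I)\to\K_*(E_1)\to\K_*(B)\to0$ is non-split, while the corresponding sequence for $E_0$ splits.
\begin{itemize}
\item An ordered $\underline{\K}_{\langle\kappa\rangle}$-isomorphism has an $n=0$ component and preserves the positive cone.
\item Your Step~2 argument that the order recovers $\underline{\K}(I\mid E_j)$ uses no $\kappa$ whatsoever.
\item Applied to the $n=0$ component, it gives an isomorphism of group extensions up to automorphisms of the end terms, which forces $\theta_1=0$.
\end{itemize}
So Step~1 fails precisely when Step~2 works. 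Choosing ``the isomorphisms independently for each $n$'' hides this: $\rho_n$ and $\beta_n$ tie every coefficient level to one common map on $\K_*$, and that map is exactly where your obstruction sits. This matches Dadarlat--Loring: the Pext part of $\mathrm{KK}$ is precisely what $\Lambda$-morphisms cannot detect, so it is not a $\kappa$-phenomenon.

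The paper avoids this by making the $n=0$ data literally identical, so the obstruction lives purely in coherence across coefficients. It takes a simple AD algebra $A$ with rigid $\K_0(A)=G_A$ and $\K_1(A)=\mathbb Z(p^\infty)$, and a simple AF algebra $B$ with rigid $\K_0(B)=G_B$. It then builds two $\Lambda$-morphisms $\Phi_0,\Phi_1$ that agree on $\K_*$ and are conjugate at each level $p^r$ by a shear $U_r$ fixing $\rho$ and $\beta$. The correction terms $d_r(t_r)=\sum_{j<r}p^jf_j$ are $\kappa$-coherent. However, any lift of them through $\overline{\alpha}_{p^r}$ (for example $\ell_r(t_r)=\sum_{j<r}e_j$) has coefficients prime to $p$ on $e_1,\dots,e_{r-1}$. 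So no $\kappa$-coherent conjugator exists, and rigidity rules out all other candidates (Theorem~\ref{thm:no-global}). The continuous fields $\mathcal F[\varphi_i,\varphi_i,\dots]$ turn this into algebras. The levelwise shears assemble into a $\underline{\K}_{\langle\kappa\rangle}$-isomorphism, while an isomorphism $E_0\cong E_1$ would produce the forbidden $\Lambda$-conjugacy on the fiber at infinity.
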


We outline the construction, which relies on unbounded $p$-primary torsion.  We first construct
two simple unital $C^*$-algebras of real rank zero $A$ and $B$, together with two
unital homomorphisms $\varphi_0,\varphi_1:A\to B.$
At each fixed $n$, the induced maps
$$
\K(\varphi_0;\mathbb{Z}_n),\K(\varphi_0;\mathbb{Z}_n): \K(A;\mathbb{Z}_n)\to \K(B;\mathbb{Z}_n)
$$
are conjugate by an order-preserving shear.  These shears
preserve the operations $\rho$ and $\beta$, but $\kappa$.  The obstruction is that the shear between mod-$p^r$ groups uses the first $r-1$ coordinates.  

Finally, we construct the desired algebras as continuous field algebras
$$
 E_i=\mathcal{F}[\varphi_i,\varphi_i,\ldots],
 \qquad i=0,1,
$$
in the sense of Dadarlat and Eilers.


The paper is organized as follows. In Section 1, we recall some preliminaries for K-theory and the continuous-field construction. In Section 2, we construct the ordered K-groups and the corresponding $C^*$-algebras. In Section 4, we construct the coefficient system and the two gluing maps, we show that
proves that no coherent
$\kappa$-compatible conjugacy exists. In Section 5, we construct two continuous fields algebras to show our main result.

\section{Preliminaries}\label{sec:preliminaries}
\subsection{Terminologies}
\begin{definition}\rm
   Let $A$ be a unital $\mathrm{C}^*$-algebra. $A$ is said to have stable rank one, written $sr(A)=1$, if the set of invertible elements of $A$ is dense. $A$ is said to have real rank zero, written $rr(A)=0$, if the set of invertible self-adjoint elements is dense in the set $A_{sa}$ of self-adjoint elements of $A$. If $A$ is not unital, let us denote the unitization of $A$ by $\widetilde{A}$. A non-unital $\mathrm{C}^*$-algebra is said to have stable rank one (or real rank zero) if its unitization has stable rank one (or real rank zero).
\end{definition}
\begin{definition}\rm
We shall say a ${\rm C}^*$-algebra is an AD algebra, if it is an inductive limit of finite direct sums of algebras $M_k(\widetilde{\mathbb{I}}_p)$ and $M_k(C(X))$, where $$
\mathbb{I}_p=\{f\in M_p(C_0(0,1]):\,f(1)=\lambda\cdot1_p,\,1_p {\rm \,is\, the\, identity\, of}\, M_p\}
$$ is the Elliott-Thomsen dimension drop interval algebra
and $X$ is one of the following finite connected CW complexes: $\{pt\},~\mathbb{T},~[0, 1].$
\end{definition}

\subsection{The total K-theory}

\begin{definition}\label{def k-total}\rm (\cite[Section 4]{DadarlatGong1997})
For a $\mathrm{C}^{*}$-algebra $A$, one defines the total K-theory of $A$ by
$$
\underline{{\rm K}}(A)=\bigoplus_{n=0}^{\infty} {\rm K}_{*}(A ; \mathbb{Z}_n),
$$
where
$\mathrm{K}_*(A;\mathbb{Z}_0)=\mathrm{K}_*(A)$,
$\mathrm{K}_*(A;\mathbb{Z}_1)=0$ and
 for $n\ge 2$, $\mathrm{K}_* (A;\mathbb{Z}_n)=\mathrm{K}_*(A\otimes C_0(W_n)).
$

It is a $\mathbb{Z}_2 \times \mathbb{Z}^{+}$graded group. For $m\in \mathbb{Z}$, denote $[m]_n:=m+n\mathbb{Z}\in\mathbb{Z}_n.$ It was shown in \cite{Schochet1984} that the coefficient maps
$$
\begin{gathered}
\rho: \mathbb{Z} \rightarrow \mathbb{Z}_n, \quad \rho(1)=[1]_n, \\
\kappa_{mn, m}: \mathbb{Z}_m \rightarrow \mathbb{Z}_{mn}, \quad \kappa_{m n, m}([1]_m)=n[1]_{mn}, \\
\kappa_{n, m n}: \mathbb{Z}_{mn} \rightarrow \mathbb{Z}_n, \quad \kappa_{n, m n}([1]_{mn})=[1]_{n},
\end{gathered}
$$
induce natural transformations
$$
\rho_{n}^{j}: {\rm K}_{j}(A) \rightarrow {\rm K}_{j}(A ; \mathbb{Z}_n),
$$
$$
\kappa_{m n, m}^{j}: {\rm K}_{j}(A ; \mathbb{Z}_m) \rightarrow {\rm K}_{j}(A ; \mathbb{Z}_{mn}),
$$
$$
\kappa_{n, m n}^{j}: {\rm K}_{j}(A ; \mathbb{Z}_{mn}) \rightarrow {\rm K}_{j}(A ; \mathbb{Z}_n) .
$$
The Bockstein map
$$
\beta_{n}^{j}: {\rm K}_{j}(A ; \mathbb{Z}_n) \rightarrow {\rm K}_{j+1}(A)
$$
appears in the six-term exact sequence
$$
\xymatrixcolsep{3pc}
\xymatrix{
{\mathrm{K}_0(A)}  \ar[r]^-{\times n}
& {\mathrm{K}_0(A)}  \ar[r]^-{\rho_{n}^{0}}
& {\mathrm{K}_0(A; \mathbb{Z}_n)} \ar[d]_-{\beta_{n}^{0}}
 \\
{\mathrm{K}_1(A; \mathbb{Z}_n)} \ar[u]_-{\beta_{n}^{1}}
& {\mathrm{K}_1(A)} \ar[l]_-{\rho_{n}^{1}}
& {\mathrm{K}_1(A)} \ar[l]_-{\times n}
}
$$
induced by the cofiber sequence
$$
A \otimes S C_{0}\left(\mathbb{T}\right) \longrightarrow A \otimes C_{0}\left(W_{n}\right) \stackrel{\beta}{\longrightarrow} A \otimes C_{0}\left(\mathbb{T}\right) \stackrel{n}{\longrightarrow} A \otimes C_{0}\left(\mathbb{T}\right),
$$
where $S C_{0}\left(\mathbb{T}\right)$ is the suspension algebra of $C_{0}\left(\mathbb{T}\right)$.

For any $n\in \mathbb{N},\,j=0,1$, if $\psi:\, A\to B$ is a homomorphism, we will denote
 $\underline{\rm K}(\psi):\, \underline{{\rm K}}(A)\to\underline{{\rm K}}(B)$ and ${\rm K}_j (\psi;\mathbb{Z}_n):\, {{\rm K}}_j(A;\mathbb{Z}_n)\to{{\rm K}}_j(B;\mathbb{Z}_n)$ to be the induced maps.

 If $\lambda:\,\underline{{\rm K}}(A)\to\underline{{\rm K}}(B)$ is a graded map, we will denote
$$
\lambda_n^j:\,{\rm K}_j(A;\mathbb{Z}_n)\to{\rm K}_j(B;\mathbb{Z}_n),\,\,n\in \mathbb{N},\,j=0,1
$$
to be the restriction maps. We also write $\lambda_n^*:=\lambda_n^0\oplus\lambda_n^1.$

\end{definition}

We also use the Rosenberg--Schochet's Universal Coefficient Theorem (UCT) (\cite{RS}).
\begin{theorem}[UCT]
\label{thm:UCT}
Let $C$ and $D$ be separable nuclear $C^*$-algebras in the bootstrap
category $\mathcal{N}$.  There is a natural short exact sequence
$$
0
\longrightarrow
\operatorname{Ext}_{\mathbb Z}^{1}
\bigl(
\mathrm K_{*-1}(C),\mathrm K_*(D)
\bigr)
\longrightarrow
\mathrm{KK}(C,D)
\longrightarrow
\operatorname{Hom}_{\mathbb Z}
\bigl(
\mathrm K_*(C),\mathrm K_*(D)
\bigr)
\longrightarrow
0.
$$
In particular, every graded homomorphism
$$
\theta_*:\mathrm K_*(C)\longrightarrow\mathrm K_*(D)
$$
is induced by some class
$$
x\in\mathrm{KK}(C,D).
$$
\end{theorem}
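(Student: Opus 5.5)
The plan is to prove the Universal Coefficient Theorem \ref{thm:UCT} in two stages, following Rosenberg--Schochet. First, I would establish it when $\mathrm K_*(D)$ is injective, i.e.\ divisible, where the $\operatorname{Ext}$ term vanishes. Then I would reduce the general case to this one by a geometric injective resolution of $\mathrm K_*(D)$.

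Throughout, let $\gamma:\mathrm{KK}(C,D)\to\operatorname{Hom}(\mathrm K_*(C),\mathrm K_*(D))$ send $x$ to the map given by Kasparov product with $x$ on $\mathrm K_*(C)=\mathrm{KK}_*(\mathbb C,C)$. This map is natural in both variables.

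\textbf{Step 1 ($\mathrm K_*(D)$ divisible).} Fix such a $D$ and let $\mathcal C_D$ be the class of separable nuclear $C$ for which $\gamma$ is an isomorphism. I would check the following closure properties.
\begin{itemize}
\item $\mathcal C_D$ contains $\mathbb C$, since $\mathrm{KK}(\mathbb C,D)=\mathrm K_0(D)$.
\item It is closed under KK-equivalence and under suspension, by Bott periodicity.
\item It is closed under countable direct sums.
\item It has the two-out-of-three property for extensions $0\to J\to C\to C/J\to0$. Both $\mathrm{KK}_*(-,D)$ and $\operatorname{Hom}(\mathrm K_*(-),\mathrm K_*(D))$ give six-term exact sequences. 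The second is exact precisely because $\operatorname{Hom}(-,\mathrm K_*(D))$ is an exact functor when $\mathrm K_*(D)$ is injective. Since $\gamma$ commutes with the connecting maps, the five lemma applies.
\item It is closed under inductive limits. The Milnor $\varprojlim^1$ sequence for $\mathrm{KK}$ (via the mapping telescope, which reduces to extensions and direct sums) is matched with the corresponding sequence for $\operatorname{Hom}$ into an injective group.
\end{itemize}
Since the bootstrap class $\mathcal N$ is the smallest class with these properties containing $\mathbb C$, we get $\mathcal N\subseteq\mathcal C_D$.

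\textbf{Step 2 (geometric resolution).} For general $D$, I would embed $\mathrm K_*(D)$ into the divisible group $\mathrm K_*(D)\otimes\mathbb Q$. This embedding is realized by $D\to D_1:=D\otimes Q$, where $Q$ is the universal UHF algebra. One may first stabilize and replace $D$ by $D\otimes\mathcal O_\infty$ or use a mapping cone so that $\mathrm K_*(D)\to\mathrm K_*(D_1)$ is injective. Completing to a cofiber sequence $D\to D_1\to D_2$, where $D_2$ is built from the mapping cone, gives $\mathrm K_*(D_2)\cong \mathrm K_*(D)\otimes\mathbb Q/\mathbb Z$ up to suspension. This group is again divisible, so
$$
0\to\mathrm K_*(D)\to\mathrm K_*(D_1)\to\mathrm K_{*}(D_2)\to0
$$
is an injective resolution realized geometrically.

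\textbf{Step 3 (splicing).} Apply $\mathrm{KK}(C,-)$ to the cofiber sequence to obtain a six-term exact sequence. By Step~1, $\mathrm{KK}(C,D_i)\cong\operatorname{Hom}(\mathrm K_*(C),\mathrm K_*(D_i))$ for $i=1,2$. Comparing with the Hom--Ext long exact sequence of the resolution, the kernel of $\mathrm{KK}(C,D)\to\mathrm{KK}(C,D_1)$ is the cokernel of
$$
\operatorname{Hom}(\mathrm K_{*-1}(C),\mathrm K_{*}(D_1))\to\operatorname{Hom}(\mathrm K_{*-1}(C),\mathrm K_{*}(D_2)),
$$
which is $\operatorname{Ext}^1(\mathrm K_{*-1}(C),\mathrm K_*(D))$ with the appropriate degree shift. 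The image of $\mathrm{KK}(C,D)\to\mathrm{KK}(C,D_1)$ is the kernel of the next map, which is identified with $\operatorname{Hom}(\mathrm K_*(C),\mathrm K_*(D))$, and under this identification the resulting map $\mathrm{KK}(C,D)\to\operatorname{Hom}(\mathrm K_*(C),\mathrm K_*(D))$ is $\gamma$. This gives the exact sequence, and naturality follows from naturality of $\gamma$ and of the resolution. Surjectivity of $\gamma$, which is the ``in particular'' clause of the theorem, is immediate from exactness.

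\textbf{Main obstacle.} The delicate points are in Step~1: verifying that $\gamma$ commutes with the boundary maps up to sign, and handling inductive limits. For the limits I would first show closure under countable direct sums, then pass to the mapping telescope, which reduces the limit case to the extension case. For Step~3, I would check the degree shift carefully, using the fact that $D_2$ sits in odd degree relative to $D$.
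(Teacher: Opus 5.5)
The paper gives no proof of this statement; it is quoted from Rosenberg--Schochet, so your outline has to stand on its own. Step~1 is the standard first step and is fine. Step~2, however, fails as soon as $\mathrm{K}_*(D)$ has torsion.

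The map $\mathrm{K}_*(D)\to\mathrm{K}_*(D\otimes Q)\cong\mathrm{K}_*(D)\otimes\mathbb Q$ induced by $d\mapsto d\otimes 1$ is $x\mapsto x\otimes 1$, and its kernel is exactly $\operatorname{tor}\mathrm{K}_*(D)$. Passing to $D\otimes\mathcal O_\infty$ changes nothing, since $\mathcal O_\infty$ is KK-equivalent to $\mathbb C$, and ``use a mapping cone'' is not a construction. Correspondingly, the mapping cone of $D\to D\otimes Q$ has $\mathrm{K}$-theory $\mathrm{K}_*(D;\mathbb Q/\mathbb Z)$ up to shift. This is an extension of $\operatorname{tor}\mathrm{K}_*(D)$ by $\mathrm{K}_*(D)\otimes\mathbb Q/\mathbb Z$ and need not be divisible: for $D=A_{r+1}$ it is $0$ in one degree and $(G_A\otimes\mathbb Q/\mathbb Z)\oplus\mathbb Z_{p^{r+1}}$ in the other. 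So neither the injectivity nor the divisibility that Step~3 relies on is available. Your argument only proves the theorem for torsion-free $\mathrm{K}_*(D)$, which excludes precisely the case the paper uses: $C=A_r$, $D=A_{r+1}$, with $\mathrm{K}_1$ cyclic of order $p^r$ and $p^{r+1}$.

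No other tensor factor helps. If $\mathrm{K}_0(E)$ is divisible, then $d\mapsto d\otimes e$ kills torsion, because $nx=0$ and $[e]=ny$ give $x\times[e]=(nx)\times y=0$. Embedding a group with torsion into an injective group is inherently non-functorial. Realizing such an embedding by a $*$-homomorphism out of $D$ amounts to constructing maps with prescribed $\operatorname{Ext}$-components, i.e.\ to the theorem itself. That is the real obstacle, not the boundary-map and $\varprojlim^1$ bookkeeping you flagged in Step~1.

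The standard remedy is to resolve the first variable instead, where realization is automatic. Every element of $\mathrm{K}_*(SC)$ is the image of a generator under a $*$-homomorphism from $C_0(\mathbb R)\otimes\mathcal K$ or $C_0(\mathbb R^2)\otimes\mathcal K$ into $SC\otimes\mathcal K$.
\begin{enumerate}
\item Summing countably many of these gives $\phi\colon F_0\to SC\otimes\mathcal K$ surjective on $\mathrm{K}_*$.
\item Its mapping cone $M$ lies in $\mathcal N$, and $\mathrm{K}_*(M)\cong\ker\phi_*$ is free. The same device then gives $\psi\colon F_1\to SM\otimes\mathcal K$ inducing an isomorphism on $\mathrm{K}_*$.
\item The mapping cone $N$ of $\psi$ lies in $\mathcal N$ and has $\mathrm{K}_*(N)=0$, which is divisible. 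Your Step~1 with $C=D=N$ gives $\mathrm{KK}(N,N)\cong\operatorname{Hom}(0,0)=0$. Hence $N$ is KK-contractible and $\psi$ is a KK-equivalence.
\item $\gamma$ is trivially an isomorphism for $F_0$ and $F_1$, hence also for $M$. Applying $\mathrm{KK}(-,D)$ to the cone sequence of $\phi$ therefore identifies its six-term sequence with the $\operatorname{Hom}(-,\mathrm{K}_*(D))$ sequence of the free resolution $0\to\mathrm{K}_*(M)\to\mathrm{K}_*(F_0)\to\mathrm{K}_*(SC)\to 0$, with the evident shifts.
\end{enumerate}
This yields the exact sequence for arbitrary $D$, with no restriction on torsion. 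You can keep Step~1 verbatim and replace Steps~2 and~3 by this argument.
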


Every class $x\in\mathrm{KK}(C,D)$ induces a homomorphism by the UMCT of Dadarlat-Loring \cite{DadarlatLoring1996B}
$$
\underline{\mathrm K}(x):
\underline{\mathrm K}(C)
\longrightarrow
\underline{\mathrm K}(D)
$$
which commutes with all Bockstein operations
$\rho,\,\beta,\,\kappa.$
Thus $\underline{\mathrm K}(x)$ is automatically $\Lambda$-linear.

\subsection{Dadarlat--Gong classification and existence}

We a weak version of the classification theorem of Dadarlat and Gong (see also \cite{DadarlatLoring1996A}).

\begin{theorem}[Dadarlat--Gong;
\cite{DadarlatGong1997}, Theorem~9.1]
\label{thm:DG-classification}
Let $A$ and $B$ be  AD algebras of real rank zero.
Suppose that
$$
\Gamma:
\bigl(
\underline{\mathrm K}(A),
\underline{\mathrm K}(A)_+,
\Sigma A
\bigr)_{\Lambda}
\longrightarrow
\bigl(
\underline{\mathrm K}(B),
\underline{\mathrm K}(B)_+,
\Sigma B
\bigr)_{\Lambda}
$$
is an ordered scaled $\Lambda$-isomorphism.  Then there exists a
$*$-isomorphism
$
\varphi:\,A\rightarrow B
$
such that
$
\underline{\mathrm K}(\varphi)=\Gamma.
$
\end{theorem}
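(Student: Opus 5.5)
The plan is the standard Elliott approximate intertwining, reduced to an existence theorem and a uniqueness theorem for maps from building blocks into an AD algebra of real rank zero. Write $A=\varinjlim(A_k,\phi_{k,l})$ and $B=\varinjlim(B_k,\psi_{k,l})$. Each $A_k,B_k$ is a finite direct sum of blocks $M_r(\widetilde{\mathbb I}_p)$ and $M_r(C(X))$ with $X\in\{pt,\mathbb T,[0,1]\}$. These blocks lie in the bootstrap class and have finitely generated $\mathrm K_*$. Hence, by Theorem~\ref{thm:UCT} together with the Dadarlat--Loring UMCT, a $\Lambda$-linear map $\underline{\mathrm K}(A_k)\to\underline{\mathrm K}(D)$ is the same thing as an element of $\mathrm{KK}(A_k,D)=\mathrm{KL}(A_k,D)$. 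Moreover, it suffices to test it on finitely many coefficient groups $\mathrm K_*(\,\cdot\,;\mathbb Z_n)$, namely those with $n$ dividing a bound for the torsion of $\mathrm K_*(A_k)$. This is exactly why $\kappa$ and $\beta$ must be part of $\Gamma$: without them, $\Gamma$ restricted to the image of $\underline{\mathrm K}(A_k)$ need not come from a KK-class.

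The first key step is \emph{existence}. Let $D$ be an AD algebra of real rank zero, and let $\alpha:\underline{\mathrm K}(A_k)\to\underline{\mathrm K}(D)$ be $\Lambda$-linear. Suppose $\alpha$ maps $\underline{\mathrm K}(A_k)_+$ into $\underline{\mathrm K}(D)_+$ and the class of the unit (or the scale) into $\Sigma D$. Then there is a $*$-homomorphism $A_k\to D$ inducing $\alpha$; if $\alpha$ is only given into $\underline{\mathrm K}(D)$, the homomorphism factors through some $D_m$ after composing to a later stage. I would prove this block by block. For $M_r(C(X))$ one realises the $\mathrm K_0$ and $\mathrm K_1$ data by projections and unitaries, using real rank zero and stable rank one of $D$. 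For $M_r(\widetilde{\mathbb I}_p)$ the Dadarlat--Gong cone is decisive. An element is positive only if it lies in $\underline{\mathrm K}(I_e\mid D)$, and this forces the $\mathbb Z_p$-coefficient part of $\alpha$ to be carried by the ideal generated by the image of the relevant projection. That is what allows the dimension-drop relation to be realised inside a corner, via a $p$-torsion unitary class placed in the correct ideal.

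The second step is \emph{uniqueness}. Let $\mu,\nu:A_k\to D$ with $D$ of real rank zero and stable rank one. If $\underline{\mathrm K}(\mu)=\underline{\mathrm K}(\nu)$, then $\mu$ and $\nu$ are approximately unitarily equivalent on any finite set. Here the UMCT gives $[\mu]=[\nu]$ in $\mathrm{KL}$. One then uses real rank zero to decompose $\mu$ and $\nu$ approximately, up to small error, into point evaluations plus a homotopically trivial part, and applies a homotopy/small-variation lemma (Dadarlat--Gong, Dadarlat--Loring) to produce the unitary.

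With existence and uniqueness in hand, I would build the intertwining inductively. Apply existence to $\Gamma\circ\underline{\mathrm K}(\phi_{k,\infty})$ to get $\mu_k:A_k\to B_{m_k}$, and to $\Gamma^{-1}\circ\underline{\mathrm K}(\psi_{m_k,\infty})$ to get $\nu_k:B_{m_k}\to A_{k'}$. Uniqueness then makes the triangles approximately commute after conjugating by unitaries, with tolerances summable along dense finite sets. Elliott's intertwining argument yields a $*$-isomorphism $\varphi$ with $\underline{\mathrm K}(\varphi)=\Gamma$; here one uses continuity of $\underline{\mathrm K}$ under inductive limits and the fact that $\underline{\mathrm K}(\mu_k)$ agrees with $\Gamma$ on the image of $\underline{\mathrm K}(A_k)$.

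I expect the main obstacle to be the existence step for dimension-drop blocks. It requires showing that the Dadarlat--Gong positivity condition, namely membership in $\underline{\mathrm K}(I_e\mid D)$ and not merely positivity of $\mathrm K_0$, is sufficient to realise the torsion and coefficient data by an honest homomorphism. The first approach I would try is the reduction, via real rank zero, to finite-dimensional pieces tensored with $\widetilde{\mathbb I}_p$ inside $D$.
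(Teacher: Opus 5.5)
\textbf{The gap is in your uniqueness step.} The paper does not prove this statement; it imports it from Dadarlat--Gong (Theorem~9.1) as a black box. Your architecture (existence and uniqueness for maps out of building blocks, then Elliott's approximate intertwining) is the one used there, but your uniqueness step is false as formulated.

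Take $A_k=C[0,1]$, let $D$ be any unital algebra of real rank zero and stable rank one (even $D=\mathbb{C}$), and put $\mu(f)=f(0)1_D$ and $\nu(f)=f(1)1_D$. These maps are homotopic, so they define the same class in $\mathrm{KK}(A_k,D)$ and $\underline{\K}(\mu)=\underline{\K}(\nu)$. Yet $\|u\mu(\iota)u^*-\nu(\iota)\|=1$ for the identity function $\iota(t)=t$ and every unitary $u$. The same happens for $C(\mathbb{T})$ (evaluation at $1$ versus $-1$) and for $\widetilde{\mathbb{I}}_p$ (two distinct interior point evaluations). Total K-theory does not see the spectral data of a homomorphism, and that is exactly what approximate unitary equivalence detects.

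Your proposed remedy does not repair this. The counterexample is already a pure point evaluation, so decomposing into point evaluations gains nothing. A homotopy lemma yields approximate unitary equivalence only for homotopies of small variation, or in the presence of an absorbing summand, and both are again spectral hypotheses. Since your existence step controls only $\underline{\K}$, nothing forces $\nu_k\circ\mu_k$ to be approximately unitarily equivalent to $\phi_{k,k'}$, so the intertwining does not close.

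\textbf{The missing idea} is to use real rank zero of the inductive systems of $A$ and $B$ themselves, not merely of the target. In the real rank zero classification literature (Elliott's AT case, Eilers, Dadarlat--Gong), uniqueness is available only in a weaker form. Typically it is a stable statement: $\mu\oplus\sigma\approx_u\nu\oplus\sigma$ on a finite set, for a suitable $\sigma$ with finite-dimensional range.

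This is paired with a decomposition theorem coming from real rank zero of the limit. After passing to a later stage, each connecting map is approximately a small summand plus a dominant summand factoring through a finite-dimensional algebra $F_0$. Homomorphisms $F_0\to A$ are determined up to unitary equivalence by $\K_0$. So the spectral data of the dominant summand becomes $\K_0$-data of projections, which $\Gamma$ transports exactly. The dominant summand is also what absorbs the discrepancy left in the small summand, which carries the $\K_1$ and torsion information.

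Consequently, the maps produced by existence must be built relative to this decomposition, not just from $\Gamma\circ\underline{\K}(\phi_{k,\infty})$. This reduction is the technical core of the argument, more so than the dimension-drop existence step you singled out. Without it, or an equivalent spectral-matching device, your outline does not produce the isomorphism.
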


The morphism version of this result is the existence theorem used below (see also \cite{DadarlatEilers1998}).

\begin{theorem}[Dadarlat--Gong;
\cite{DadarlatGong1997}, Corollary~9.2]
\label{thm:DG-existence}
Let $A$ and $B$ be as in Theorem~\ref{thm:DG-classification}.  Suppose that
$$
\Gamma:
\underline{\mathrm K}(A)
\rightarrow
\underline{\mathrm K}(B)
$$
is a positive $\Lambda$-linear morphism which preserves the scale in the
sense that
$
\Gamma(\Sigma A)\subseteq\Sigma B.
$
Then there exists a $*$-homomorphism
$
\varphi:A\rightarrow B
$
such that
$
\underline{\mathrm K}(\varphi)=\Gamma.
$
If $A$ and $B$ are unital and
$
\Gamma([1_A])=[1_B],
$
then $\varphi$ may be chosen unital.
\end{theorem}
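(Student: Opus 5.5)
The plan is the standard route for such existence results: approximate intertwining over an inductive limit decomposition of $A$, building the $*$-homomorphism block by block. Write $A=\varinjlim(A_n,\mu_{n,n+1})$, where each $A_n$ is a finite direct sum of blocks $M_k(\widetilde{\mathbb I}_p)$ and $M_k(C(X))$ with $X\in\{pt,\mathbb T,[0,1]\}$, and let $\mu_n:A_n\to A$ be the canonical maps. Composing gives positive $\Lambda$-linear maps
$$\Gamma_n:=\Gamma\circ\underline{\mathrm K}(\mu_n):\underline{\mathrm K}(A_n)\to\underline{\mathrm K}(B).$$
Each $\Gamma_n$ sends the scale of $A_n$ into $\Sigma B$, and in the unital case it sends $[1_{A_n}]$ to $[1_B]$. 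Each $A_n$ lies in the bootstrap class and has finitely generated K-theory. By Theorem~\ref{thm:UCT} together with the UMCT of Dadarlat--Loring, $\Gamma_n$ is therefore induced by a class $x_n\in\mathrm{KK}(A_n,B)$, and $\Gamma_n$ is determined by finitely many elements of $\underline{\mathrm K}(A_n)$.

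\textbf{Step 1 (existence for one block).} I would show that a positive, scale-preserving $\Lambda$-linear map $\underline{\mathrm K}(D)\to\underline{\mathrm K}(B)$ out of a single building block $D$ is induced by a $*$-homomorphism $D\to B$. Positivity is used as follows. The image $[e]$ of the unit of $D$ lies in $\mathrm K_0^+(B)$, and the whole image of $\underline{\mathrm K}(D)$ lies in $\underline{\mathrm K}(I_e\mid B)$. Since $B$ has real rank zero and stable rank one, the scale condition lets me choose an actual projection $e\in B$ representing $[e]$.
\begin{itemize}[label={}, leftmargin=1em]
\item For $D=C(\mathbb T)$, I would realize the prescribed $\mathrm K_1$-class by a unitary in $eBe$. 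Since the class lies in $\mathrm K_1(I_e)$, stable rank one gives $\mathrm K_1(eBe)\cong\mathrm K_1(I_e)$, so such a unitary exists.
\item For $D=[0,1]$ or a point, only the projection $e$ is needed.
\item For $D=\widetilde{\mathbb I}_p$, the $\Lambda$-compatibility says that the element of $\mathrm K_0(B;\mathbb Z_p)$ assigned to the generator is a $p$-torsion lift compatible with $\beta$. I would realize it using real rank zero: write $e$ as $p$ equivalent subprojections plus a remainder, then use a path of projections or a unitary to build a map out of the dimension drop algebra.
\end{itemize}
Matrix amplification and direct sums are handled by orthogonal decomposition of projections in $B$.

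\textbf{Step 2 (uniqueness and factoring).} The induced map $\Gamma_n$ factors, at the level of a finite set of K-elements, through a finite stage $B_m$, provided $B$ is itself written as such a limit. I would then invoke the Dadarlat--Gong uniqueness theorem: two homomorphisms from a building block into a real rank zero AD algebra that agree on $\underline{\mathrm K}$ are approximately unitarily equivalent. This lets me correct the homomorphism $\varphi_{n+1}$ from Step 1 so that $\varphi_{n+1}\circ\mu_{n,n+1}$ is close to $\varphi_n$ on a prescribed finite set, within a prescribed tolerance. An Elliott approximate intertwining argument then gives a limit homomorphism $\varphi:A\to B$ with $\underline{\mathrm K}(\varphi)\circ\underline{\mathrm K}(\mu_n)=\Gamma_n$ for all $n$, and hence $\underline{\mathrm K}(\varphi)=\Gamma$.

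\textbf{Step 3 (unital case).} If $\Gamma([1_A])=[1_B]$, then at each stage $\varphi_n(1)$ is a projection with $[\varphi_n(1)]=[1_B]$. Since $B$ has stable rank one, $\varphi_n(1)$ is unitarily equivalent to $1_B$, so conjugating by a unitary makes each $\varphi_n$ unital. The intertwining unitaries then preserve unitality, so the limit $\varphi$ is unital.

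The main obstacle is Step 1 for the dimension drop blocks. There one must turn the purely algebraic data, namely positivity in $\underline{\mathrm K}(I_e\mid B)$ together with $\beta$- and $\rho$-compatibility, into an honest homomorphism. This is where real rank zero, through the divisibility of projections and the lifting of mod-$p$ classes, is indispensable.
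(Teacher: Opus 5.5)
The paper gives no proof of this statement. It quotes Dadarlat--Gong (Corollary~9.2) and uses it as a black box.

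Your architecture has the right shape: realize maps out of building blocks, correct them by uniqueness, then run a one-sided approximate intertwining. However, both block-level statements you rely on are false as formulated, and for the same reason. Your argument never uses that $A$ has real rank zero, and that hypothesis is exactly what makes them true.

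\textbf{Uniqueness (Step 2).} Two homomorphisms from a building block into a real rank zero AD algebra that agree on $\underline{\mathrm K}$ need not be approximately unitarily equivalent. Take $D=C([0,1])$ and $B=\mathbb C$. The evaluations $\mathrm{ev}_0,\mathrm{ev}_1$ are homotopic, so $\underline{\mathrm K}(\mathrm{ev}_0)=\underline{\mathrm K}(\mathrm{ev}_1)$. Yet they differ by $1$ on $\iota(t)=t$, and conjugation by unitaries of $\mathbb C$ is trivial. The same happens for $D=C(\mathbb T)$ with $f\mapsto f(1)1_B$ and $f\mapsto f(v)$, where $v$ is a full-spectrum unitary in a UHF algebra.

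What is available is uniqueness relative to the connecting maps. Given a finite $F\subset A_n$ and $\epsilon>0$, there is $m>n$ such that any two homomorphisms $A_m\to B$ with the same $\underline{\mathrm K}$ become $\epsilon$-close on $F$, up to unitary conjugation, after precomposition with $\mu_{n,m}$. Real rank zero of $A$ enters through the small eigenvalue variation of the $\mu_{n,m}$. The intertwining must then be arranged so that consecutive maps are both of the form $\chi\circ\mu_{n_k,n_{k+1}}$. For instance, take $\varphi_k:=\psi_{n_{k+1}}\circ\mu_{n_k,n_{k+1}}$, where $\psi_j$ realizes $\Gamma\circ\underline{\mathrm K}(\mu_j)$.

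\textbf{Existence (Step 1).} A positive, scale-preserving, even unital, $\Lambda$-map out of a single block need not be induced by any homomorphism. Let $p\ge 3$, $D=\widetilde{\mathbb I}_p$ and $B=\mathbb C$.

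Consider the extension $0\to SM_p\to\widetilde{\mathbb I}_p\to\mathbb C^2\to 0$. We have $\K_0(SM_p;\mathbb Z_p)=0$, and the mod-$p$ index map vanishes because the integral one is $(a,b)\mapsto\pm p(a-b)$. Hence $\K_0(\widetilde{\mathbb I}_p;\mathbb Z_p)\cong\mathbb Z_p^2$ via $(\mathrm{ev}_0,\mathrm{ev}_1)$.

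Now take $x=2[\mathrm{ev}_0]-[\mathrm{ev}_1]\in\mathrm{KK}(\widetilde{\mathbb I}_p,\mathbb C)$. It induces a $\Lambda$-map with $[1]\mapsto 1$. This map is positive and unital, because in both algebras the Dadarlat--Gong cone consists of $0$ and the elements with strictly positive $\K_0$-component. But it acts on $\K_0(\,\cdot\,;\mathbb Z_p)$ by $(a,b)\mapsto 2a-b$. The only nonzero homomorphisms $\widetilde{\mathbb I}_p\to\mathbb C$ are $\mathrm{ev}_0$ and $\mathrm{ev}_1$, which act by $(a,b)\mapsto a$ and $(a,b)\mapsto b$. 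So $x$ is not realized.

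Consequently, your dimension-drop construction (splitting $e$ into $p$ equivalent pieces plus a remainder) cannot work in general; here $e$ is a minimal projection. The realizability of $\Gamma\circ\underline{\mathrm K}(\mu_n)$ is true a posteriori, since it is induced by $\varphi\circ\mu_n$. But it has to be derived from real rank zero of $A$, typically after passing to later stages, not from positivity alone.

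These relative existence and uniqueness results are the real content of the Dadarlat--Gong (and Eilers) machinery. Once they are in place, the intertwining and your unital bookkeeping in Step~3 are routine.
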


\subsection{Coefficient complexes}

For an abelian group $G$, we denote its subgroup consisting of all the torsion elements of $G$   by $\operatorname{tor}(G)$. Denote by $G[n]$ the subgroup of elements of $G$ annihilated by $n \in \mathbb{N}$.

\begin{definition}\rm 
An ordered abelian group $\left(G, G_{+}\right)$ is called weakly unperforated if it satisfies both of the following conditions:

(i) $G/\operatorname{tor}(G)$ is unperforated, i.e, in the quotient ordered group, $ng \geq 0$ with
$ n\in \mathbb{N}^+$  implies $g\geq0$;

(ii) Given $g\in G_+$, $t\in  \operatorname{tor}(G)$, $n\in \mathbb{N}^+$ and $m\in \mathbb{Z}$ with $ng + mt\in G_+$, then
$t = t' + t''$ for some $t', t''\in\operatorname{tor}(G)$ such that  $mt' = 0$ and $g+ t''\in G_+$.
\end{definition}

Let $G$ be an ordered abelian group, $H$ an abelian group, and $f: G \rightarrow H$ a surjective group homomorphism. Say that $h \in H$ is positive if it is the image of a positive element in $G$. The important and obvious feature of the order on $H$ thus defined (the so-called quotient order) is that every positive element in $H$ lifts to a positive element in $G$.

Recall the finite-coefficient complexes defined by Eilers and Toms.

\begin{definition}[\cite{EilersToms2008}, Definition~2.1]\rm
\label{def:n-coefficient-complex}
Let $n\geq2$.  An \emph{$n$-coefficient complex} is an exact sequence
$$
G_0
\xrightarrow{\rho}
G_n
\xrightarrow{\beta}
G_1
$$
of abelian groups such that, after setting
$$
G_*:=G_0\oplus G_1,
\quad
G_{\square \hspace{-2mm} \tiny{n}}:=G_0\oplus G_n,
$$
the following conditions hold:
(i) $n \mathrm{G}_n=0$.

(ii) $\operatorname{ker} \rho=n \mathrm{G}_0, \operatorname{im} \beta=\mathrm{G}_1[n]$.

(iii) $\mathrm{G}_*$ and $\mathrm{G}_{\square \hspace{-2mm} \tiny{n}}$ are graded ordered groups restricting to the same order on $\mathrm{G}_0$.

(iv) $\mathrm{G}_*$ has the Riesz interpolation property.

(v) $\mathrm{G}_0 \oplus \rho\left(\mathrm{G}_0\right)$ has the quotient order coming from $\operatorname{id}_{G_0} \oplus \rho$.

(vi) $\mathrm{G}_0 \oplus \beta\left(\mathrm{G}_n\right)$ has the quotient order coming from $\mathrm{id}_{G_0} \oplus \beta$.

(vii) $G_0$ is unperforated and $G_*$ is weakly unperforated.

We say that an element $(x, y, z)$ is positive in $\overline{G}$ if and only if
$$
\mathrm{G}_{\square \hspace{-2mm} \tiny{n}} \ni(x, y) \geq 0 \text { and } \mathrm{G}_* \ni(x, z) \geq 0 .
$$

A morphism of $n$-coefficient complexes is a positive triple
$$
(\theta_0,\theta_n,\theta_1)
$$
which commutes with $\rho$ and $\beta$.
\end{definition}

\begin{definition}\rm \label{E def k}
  For real rank zero $\mathrm{AD}$ algebras with $n\cdot$tor\,${\rm K}_1(A)=0$,
$$
\mathbf{K}(A; n): {\rm K}_0(A) \xrightarrow{\rho_n^0} {\rm K}_0(A ; \mathbb{Z}_n) \xrightarrow{\beta_n^0} {\rm K}_1(A)
$$
is a complete invariant \cite{Eilers1996} (both the scale and the order are also required,
where by identifying ${\rm K}_0(A)\oplus{\rm K}_0(A ; \mathbb{Z}_n)\oplus{\rm K}_1(A)$
with its natural image in $\underline{\rm K}(A)$, we say
$$(x_0,x_n,x_1)\in {\rm K}_0(A)\oplus{\rm K}_0(A ; \mathbb{Z}_n)\oplus{\rm K}_1(A)$$
is positive, if $(x_0,x_n,x_1)\in \underline{{\rm K}}(A)_+$).
For any *-homomorphism $\phi:A\to B$, $\phi$ induces $\underline{\rm K}(\phi):\underline{\rm K}(A)\to \underline{\rm K}(B)$, we identify $\mathbf{K}(\phi; n)$ as the triple
$$
({\rm K}_0(\phi),  {\rm K}_0(\phi ; \mathbb{Z}_n), {\rm K}_1(\phi)).
$$
\end{definition}

Eilers and Toms also give a finite-coefficient range theorem.

\begin{theorem}[\cite{EilersToms2008}, Theorem~5.4]
\label{thm:ET-range}
Let $n\geq2$. 
The following conditions are equivalent:
\begin{enumerate}[label=\textup{(\roman*)}]
\item $\mathcal G$ is an $n$-coefficient complex;
\item $\mathcal G$ is an inductive limit of finite direct sums of the
coefficient complexes associated with $\mathbb{C},\,\mathbb I_p^{\sim},\, C(\mathbb{T}),$
and its ordinary graded part has the Riesz interpolation property;
\item there exists an AD algebra of real rank zero $A$ such that
$$
\mathcal G\cong\mathbf K(A;n)
$$
as ordered $n$-coefficient complexes.
\end{enumerate}
\end{theorem}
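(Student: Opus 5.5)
The plan is to prove the cycle (iii)$\Rightarrow$(i)$\Rightarrow$(ii)$\Rightarrow$(iii). For (iii)$\Rightarrow$(i), take $A=\varinjlim A_k$ with each $A_k$ a finite direct sum of the blocks $\mathbb C$, $\widetilde{\mathbb I}_p$, $C(\mathbb T)$ (up to matrix amplification) and $rr(A)=0$. Conditions (i) and (ii) follow from the six-term sequence in Definition~\ref{def k-total} together with $n\,\mathrm K_*(A;\mathbb Z_n)=0$. For each block one computes $\mathbf K(A_k;n)$ directly and checks (iii), (v), (vi) and (vii). These properties are preserved under inductive limits of positive maps, since the cones of $\underline{\mathrm K}(A)_+$ are unions of images of the cones at finite stages. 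The Riesz interpolation property (iv) is the only place where real rank zero enters: projections have the interpolation property, and Dadarlat--Gong's description of $\underline{\mathrm K}(A)_+$ via the ideals $I_e$ carries it over to $G_*$.

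For (ii)$\Rightarrow$(iii), write $\mathcal G=\varinjlim \mathcal G_k$, where each $\mathcal G_k$ is a finite sum of block complexes. Realize $\mathcal G_k$ as $\mathbf K(A_k;n)$. Each connecting morphism $\mathcal G_k\to\mathcal G_{k+1}$ is a positive morphism of $n$-coefficient complexes, and we lift it to a $*$-homomorphism $A_k\to A_{k+1}$. For the blocks $\mathbb C,\widetilde{\mathbb I}_p,C(\mathbb T)$ this is a concrete existence computation: point evaluations and winding-number maps realize the $\mathrm K_0,\mathrm K_1$ data, and dimension-drop maps realize the $\mathbb Z_n$-part. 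Telescoping the sequence if necessary, we choose the lifts so that in the limit the spectra become dense enough. The Riesz property of $G_*$ then lets us arrange that the limit has real rank zero, by the standard criterion that an AD limit whose K-theory has interpolation can be rearranged to satisfy the eigenvalue-variation condition. Continuity of K-theory then gives $\mathbf K(A;n)\cong\mathcal G$.

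The main obstacle is (i)$\Rightarrow$(ii), an Effros--Handelman--Shen type local factorization. We need the following: for every finitely presented positive morphism $\alpha:\mathcal H\to\mathcal G$ from a finite block complex and every finite set killed by $\alpha$, $\alpha$ factors as $\mathcal H\to\mathcal H'\to\mathcal G$ through another finite block complex $\mathcal H'$, killing the given elements. The approach is the following sequence of steps.
\begin{enumerate}[label=\textup{(\alph*)}]
\item Apply Shen's interpolation argument to the ordered group $G_*$, which has the Riesz property (iv) and is weakly unperforated (vii). This produces the torsion-free part of the factorization through sums of $\mathbb Z$ and $\mathbb Z\oplus\mathbb Z$ (the latter coming from $C(\mathbb T)$).
\item Handle the torsion in $G_1[n]=\operatorname{im}\beta$. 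Use (vi) to lift positive elements through $\beta$, and realize each cyclic summand $\mathbb Z_{d}$ with $d\mid n$ by a dimension-drop block $\widetilde{\mathbb I}_d$.
\item Use (v) to make the $\rho$-component compatible with the order on $G_{\square \hspace{-2mm} \tiny{n}}$.
\item Use condition (ii) of Definition~\ref{def:n-coefficient-complex}, namely $\ker\rho=nG_0$, to guarantee that the two gradings glue consistently.
\end{enumerate}
I expect the delicate point to be simultaneous positivity in both $G_*$ and $G_{\square \hspace{-2mm} \tiny{n}}$. My first attempt there is to use weak unperforation (ii) to split torsion perturbations $t=t'+t''$, and to absorb $t''$ into a block while $t'$ is killed by passing to a larger multiple. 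Once the local factorization holds, a standard intertwining argument writes $\mathcal G$ as an inductive limit of finite block complexes, which gives (ii).
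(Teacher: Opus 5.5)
\textbf{Verdict: genuine gap.} The paper gives no proof of this statement. It is imported from \cite{EilersToms2008} and used only as a black box, through Corollary~\ref{cor:simple-unital-range}. So your sketch has to stand on its own. Your cycle (iii)$\Rightarrow$(i)$\Rightarrow$(ii)$\Rightarrow$(iii) is the standard architecture for a range theorem, but the implication that carries the content, (i)$\Rightarrow$(ii), is only announced. Steps (a)--(d) name tools, and the point you call delicate (simultaneous positivity in $G_0\oplus G_1$ and $G_0\oplus G_n$) is left as a ``first attempt''. The proposed split is also not adequate:
\begin{itemize}
\item Step (a) invokes Shen's criterion, which needs an unperforated ordered group. $G_*$ is only weakly unperforated, and its order is not determined by the order on $G_0$: in the non-simple case a $\K_1$-component is positive over $x$ only if it comes from the ideal of $x$. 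So you cannot first factor a ``torsion-free part'' and then add torsion. You need a graded, Elliott-type local factorization in which the free and torsion parts of $G_1$ are handled together with the order.
\item Step (b) models only $G_1[n]=\operatorname{im}\beta$, using blocks $\widetilde{\mathbb I}_d$ with $d\mid n$. Nothing in Definition~\ref{def:n-coefficient-complex} forces $n\operatorname{tor}(G_1)=0$. For the algebra $A$ of Proposition~\ref{prop:properties-A}, $\mathbf K(A;p)$ is an $n$-coefficient complex with $G_1=\mathbb Z(p^\infty)$.
\item A direct limit of torsion-free groups is torsion-free, so elements of order $p^r$ with $r\geq 2$ must come from summands $\widetilde{\mathbb I}_d$ with $d\nmid n$. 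Their coefficient complex is $\mathbb Z\to\mathbb Z_n\oplus\mathbb Z_{\gcd(d,n)}\to\mathbb Z_d$. The positivity of that extra torsion is controlled only through $G_*$, since $G_n$ sees only $G_1[n]$. Your plan does not address this.
\end{itemize}

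The other two implications also rest on unproved steps.
\begin{itemize}
\item \textbf{Lifting in (ii)$\Rightarrow$(iii).} A morphism of $n$-coefficient complexes is only a triple $(\theta_0,\theta_n,\theta_1)$, whereas a $*$-homomorphism fixes a positive $\Lambda$-morphism on all of $\underline{\K}$. Before realizing anything by homomorphisms, you must show that each such triple between block complexes extends to a positive $\Lambda$-morphism. For blocks this can be done via the UCT, with the $\operatorname{Ext}(\mathbb Z_d,\mathbb Z)$-term supplying the freedom in $\theta_n$.
\item \textbf{Real rank zero in (ii)$\Rightarrow$(iii).} This is the main analytic content of the range theorem, not a ``standard criterion'' you can invoke. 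You must choose the lifts so that their eigenvalue variation tends to zero. Deriving this from interpolation in $G_*$ needs an explicit telescoping and decomposition argument of the kind in Elliott's real-rank-zero range theorem for AT algebras.
\item \textbf{Riesz interpolation in (iii)$\Rightarrow$(i).} The Riesz property of $\K_*(A)$ in the Dadarlat--Gong order already fails for the building blocks $C(\mathbb T)$ and $\widetilde{\mathbb I}_d$ themselves. It is a genuine theorem about real-rank-zero AD algebras, and it does not follow from Riesz decomposition for projections alone. You should cite it rather than gesture at it.
\end{itemize}
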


We use the following simple-unital form of the preceding theorem.

\begin{corollary}
\label{cor:simple-unital-range}
Let
$$
(G,G^+,u)
$$
be a countable simple scaled dimension group, let $H$ be a countable
abelian group annihilated by $n$, and suppose that
$$
G
\xrightarrow{\rho}
G_n
\xrightarrow{\beta}
H
$$
is an $n$-coefficient complex whose order is strict over $G$.
Then there exists a simple unital  AD algebra of real rank zero $D$ such that
$$
\bigl(
\mathrm K_0(D),\mathrm K_0(D)_+,[1_D]
\bigr)
\cong
(G,G^+,u),
\qquad
\mathrm K_1(D)\cong H,
$$
and
$$
\mathbf K_n(D)
\cong
\left(
G\xrightarrow{\rho}G_n\xrightarrow{\beta}H
\right).
$$
\end{corollary}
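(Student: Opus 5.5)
The plan is to obtain Corollary~\ref{cor:simple-unital-range} from the equivalence (i)$\Leftrightarrow$(iii) of Theorem~\ref{thm:ET-range}, and then show that the algebra produced there can be taken simple and unital with the prescribed unit class. The first step is to pass from the scaled data to an unscaled one. Since $(G,G^+)$ is a simple dimension group and the order on $G_{\square\hspace{-2mm}\tiny{n}}$ and $G_*=G\oplus H$ is strict over $G$, the complex $G\to G_n\to H$ is an $n$-coefficient complex by hypothesis. Theorem~\ref{thm:ET-range} then gives an AD algebra $A$ of real rank zero with $\mathbf K(A;n)\cong(G\to G_n\to H)$ as ordered complexes. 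Here $\mathbf K(A;n)$ is a complete invariant in the sense of Definition~\ref{E def k}, because $n\cdot\mathrm{tor}\,\mathrm K_1(A)=n H=0$.

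The second step is to cut down to a unital corner. Choose a projection $e\in A\otimes\mathcal K$ with $[e]=u$; this is possible because $u\in G^+$ and $A$ has real rank zero and stable rank one, so every positive class in $\mathrm K_0$ is realized by a projection. Set $D=e(A\otimes\mathcal K)e$. Since $G$ is simple, $u$ is an order unit. Hence $e$ is full in $A\otimes\mathcal K$, and by Brown's theorem $D$ is stably isomorphic to $A$. Consequently $D$ is a unital AD algebra of real rank zero (AD and real rank zero pass to hereditary subalgebras of stabilizations via the inductive limit, as in \cite{Eilers1996}), with $\mathrm K_*(D)\cong\mathrm K_*(A)$, and the total $\mathrm K$-theory together with its positive cone and the maps $\rho,\beta$ is unchanged. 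In particular $\mathbf K_n(D)\cong(G\to G_n\to H)$, $\mathrm K_1(D)\cong H$, and $[1_D]=u$, so $(\mathrm K_0(D),\mathrm K_0(D)_+,[1_D])\cong(G,G^+,u)$.

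The third step is simplicity, which I expect to be the main obstacle. Ideals of a real rank zero algebra of stable rank one correspond to order ideals of $\mathrm K_0$ through their $\mathrm K_0$-images, so simplicity of $G$ alone forces $D$ to have no nontrivial ideals. To see this, note that a nonzero ideal $I$ of $D$ has real rank zero and hence contains a nonzero projection $p$. By simplicity of $G$ and strictness of the order, $[p]$ is an order unit, so $n[p]\ge [1_D]$ for some $n$. By stable rank one (cancellation), $1_D$ is then equivalent to a subprojection of $p\otimes 1_n$, so $1_D\in I$. The delicate point is to make sure the realization from Theorem~\ref{thm:ET-range} genuinely has stable rank one and real rank zero in the corner. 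If that is hard to extract directly, I would instead rebuild $A$ from the inductive limit description in (ii) of Theorem~\ref{thm:ET-range}, telescoping so that the connecting maps are unital on the relevant corners and, using simplicity of $G$, have all partial multiplicities nonzero. This gives a simple limit directly, and it inherits real rank zero from the fact that $G$ is a dimension group with the given interpolation.
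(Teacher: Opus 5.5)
Your argument is correct, and it differs from the paper's in how simplicity is obtained; the unit step is the same. The paper reopens the Eilers--Toms construction (proof of their Theorem~5.4) and asserts, without checking, that the inductive system can be chosen so that every nonzero positive element eventually becomes full, giving a simple limit. It then passes to the full corner determined by $u$, exactly as in your second step.

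You instead use Theorem~\ref{thm:ET-range} as a black box and prove simplicity intrinsically. In an algebra of real rank zero and stable rank one, every nonzero ideal contains a nonzero projection, and by stable finiteness its class is nonzero. Simplicity of $(G,G^+)$ makes that class an order unit, and cancellation then puts $1_D$ into the ideal. This shows that \emph{any} real rank zero AD realization of a complex with simple $G$ is automatically simple. That is more robust than the paper's appeal to the internals of a construction it does not reproduce.

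The same reasoning, applied in $A\otimes\mathcal K$ with the fact that a real rank zero algebra is generated as an ideal by its projections, is what actually justifies your claim that $e$ is full. You assert that claim in step two before giving any argument, so you should reorder or add a line there.

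The ``delicate point'' you flag is not one. Real rank zero is part of conclusion (iii) of Theorem~\ref{thm:ET-range}. Stable rank one holds for every AD algebra, since $M_k(C(X))$ with $\dim X\le 1$ and $M_k(\widetilde{\mathbb I}_p)$ have stable rank one, and this passes to inductive limits. So drop the fallback via (ii), which is the weakest part of your write-up: real rank zero of such a limit does not follow merely from $G$ being a dimension group.

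Finally, the claim that a full corner of $A\otimes\mathcal K$ is again AD deserves one line rather than a vague citation. Projections over matrices of $C[0,1]$, $C(\mathbb T)$ and $\widetilde{\mathbb I}_p$ are unitarily equivalent to standard ones (for $\widetilde{\mathbb I}_p$, of the form $W(e_0\otimes 1_p)W^*$ with $W$ a unitary in the algebra). Hence corners of building blocks are building blocks.
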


\begin{proof}
By Theorem~\ref{thm:ET-range}, the coefficient complex is realized by an AD algebra of real rank zero.  In the large-denominator construction used in
the proof of \cite[Theorem~5.4]{EilersToms2008}, the inductive system may
be chosen so that every nonzero positive element eventually becomes full.
Since $(G,G^+)$ is simple, this gives a simple limit algebra.

The scale is chosen so that the prescribed order unit $u$ is represented
by the unit.  Equivalently, one may pass to the full corner corresponding
to $u$.  The resulting algebra is simple, unital, and has the asserted
ordered scaled invariant.
\end{proof}

\begin{remark}
For a C$^*$-algebra $D$ and $n\geq2$, finite-coefficient K-theory gives
the exact sequence
$$
 \K_i(D)
 \xrightarrow{\times n}
 \K_i(D)
 \xrightarrow{\rho_n^i}
 \K_i(D;\mathbb Z_n)
 \xrightarrow{\beta_n^i}
 \K_{i+1}(D)
 \xrightarrow{\times n}
 \K_{i+1}(D),
$$
where the index $i+1$ is interpreted modulo two.  In particular, there
is a natural short exact sequence
\begin{equation}
 0\longrightarrow
 \K_i(D)/n\K_i(D)
 \longrightarrow
 \K_i(D;\mathbb Z_n)
 \xrightarrow{\beta_n^i}
 \K_{i+1}(D)[n]
 \longrightarrow0.
\end{equation}

This short sequence  splits as a sequence
of abelian groups, but the splitting is generally not natural. 
In our construction we use a compatible system of split
coordinates. The existence of the required coefficient complex follows
from the range theorem for AD algebras of real rank zero.
\end{remark}


We shall also use the following standard facts from \cite{DadarlatEilers1999,DL1994aif}.
\begin{proposition}
\label{prop:inductive-limit-facts}
Let
$$
C=\varinjlim(C_r,\psi_r)
$$
be a sequential inductive limit of C$^*$-algebras.
\begin{enumerate}[label=\textup{(\roman*)}]
\item K-theory is continuous:
$$
\mathrm K_i(C)
\cong
\varinjlim
\bigl(
\mathrm K_i(C_r),\mathrm K_i(\psi_r)
\bigr),
\qquad i=0,1.
$$
The same holds for finite-coefficient K-theory.
\item If every $C_r$ has real rank zero, then $C$ has real rank zero.
\item If every $C_r$ is an AD algebra, then $C$ is an AD algebra.
\item If every $C_r$ is simple and every $\psi_r$ is unital, then every
$\psi_r$ is injective and full, and $C$ is simple.
\end{enumerate}
\end{proposition}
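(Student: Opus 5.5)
The proposition collects four standard facts about sequential inductive limits $C=\varinjlim(C_r,\psi_r)$. I will treat each item separately, since each rests on a different standard principle.

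\textbf{Item (i).} I use that $\mathrm K_0$ and $\mathrm K_1$ commute with sequential inductive limits. Every projection or unitary in a matrix algebra over $\widetilde C$ is close to the image of one over some $\widetilde C_r$. Close projections are equivalent, and close unitaries are homotopic. The same closeness arguments show that equivalences, homotopies and relations lift to some finite stage, which gives surjectivity and injectivity of the canonical map $\varinjlim \mathrm K_i(C_r)\to\mathrm K_i(C)$.

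For finite coefficients, recall $\mathrm K_i(C;\mathbb Z_n)=\mathrm K_i(C\otimes C_0(W_n))$. Since $C\otimes C_0(W_n)=\varinjlim (C_r\otimes C_0(W_n),\psi_r\otimes\mathrm{id})$ (the minimal tensor product with a nuclear algebra preserves inductive limits), the first part of the argument applies verbatim.

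\textbf{Items (ii) and (iii).} For (ii), let $a=a^*\in\widetilde C$ and $\varepsilon>0$. I approximate $a$ within $\varepsilon$ by the image of a self-adjoint $b\in\widetilde C_r$; this is possible by taking real parts of approximants. Since $C_r$ has real rank zero, $b$ is within $\varepsilon$ of an invertible self-adjoint $b'$. The image of $b'$ is invertible and self-adjoint, and it lies within $2\varepsilon$ of $a$. Hence $C$ has real rank zero.

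For (iii), each $C_r$ is itself an inductive limit of finite sums of the building blocks $M_k(\widetilde{\mathbb I}_p)$ and $M_k(C(X))$. A diagonal argument then realizes $C$ as such a limit. The building blocks are semiprojective, or at least weakly semiprojective in the form proved in \cite{DL1994aif}. So finite subsets of $C$ can be approximated by images of building blocks from some $C_r$, and maps between these blocks can be perturbed to exact homomorphisms intertwining the limit up to $\varepsilon_r$. The Elliott approximate intertwining then yields $C$ as a genuine sequential limit of such blocks. This step is the main obstacle; in practice I would cite \cite{DadarlatEilers1999,DL1994aif} for the required (weak) semiprojectivity rather than reprove it.

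\textbf{Item (iv).} Since $\psi_r$ is unital, it is nonzero. The kernel of $\psi_r$ is an ideal of the simple algebra $C_r$, and it is proper because $\psi_r(1)=1\neq0$. So the kernel is zero and $\psi_r$ is injective. Fullness follows because the image contains the unit of $C_{r+1}$, which generates $C_{r+1}$ as an ideal.

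For simplicity of $C$, let $J\neq0$ be a closed ideal of $C$. Then $J=\overline{\bigcup_r \psi_{r,\infty}^{-1}(J)}$ in the usual sense, so some $J\cap\psi_{r,\infty}(C_r)\neq0$. By simplicity of $C_r$ this intersection is the whole image, so it contains the unit. Therefore $J=C$.
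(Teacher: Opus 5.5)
Your proposal is correct, but it takes a different route from the paper: the paper gives no argument for this proposition and simply records (i)--(iv) as standard facts from the work of Dadarlat--Eilers and Dadarlat--Loring. For (i), (ii) and (iv) you give the usual self-contained proofs: perturbation of projections and unitaries together with exactness of $-\otimes C_0(W_n)$; pushing an invertible self-adjoint approximant forward along the unital map $\widetilde{C_r}\to\widetilde{C}$; and the fact that a closed ideal of a limit is the closure of its traces $J\cap\psi_{r,\infty}(C_r)$. So those items no longer depend on the literature.

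There are two small points to tighten. In (iv), ``the image contains the unit'' only shows that $\psi_r(C_r)$ as a whole generates $C_{r+1}$ as an ideal. Fullness in the usual sense, namely that each $\psi_r(a)$ with $a\neq0$ is full, follows instead from injectivity of $\psi_r$ together with simplicity of $C_{r+1}$.

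Item (iii) is the only one with real content, and you, like the paper, ultimately reduce it to (weak) semiprojectivity of the building blocks. What you actually need there is a \emph{one-sided} intertwining $\nu_k:D_k\to D_{k+1}$, $\phi_k:D_k\to C$. There are no maps from the $C_r$ back into the blocks, so a two-sided Elliott intertwining is not available. A one-sided intertwining automatically gives a surjection $\varinjlim D_k\to C$. Injectivity, however, requires choosing the next stage far enough out, both the index $r_{k+1}$ in the system for $C$ and the index $s_{k+1}$ in the system for $C_{r_{k+1}}$, so that $\|\nu_k(y)\|\le\|\phi_k(y)\|+\delta_k$ on the relevant finite subsets of $D_k$. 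This is possible because norms in an inductive limit are limits of finite-stage norms, and it is exactly the bookkeeping the cited references carry out.
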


\section{Construct the ordered K-groups and algebras}\label{sec:ordered-groups}
\subsection{The ordered K-groups}
\begin{definition}\rm
Fix a prime number $p$.  Let
$$
 T=\mathbb Z(p^\infty)\cong\mathbb Z\left[\frac{1}{p}\right]/\mathbb{Z}
$$
be the Pr$\ddot{\rm u}$fer $p$-group, where $\mathbb Z\left[\frac{1}{p}\right]=\{\frac{l}{p^k}\mid k\in\mathbb N, l\in\mathbb{Z}\}$. 

Let $\{e_j\}_{j=1}^\infty$ and $\{f_j\}_{j=1}^\infty$ be two linearly independent systems.
Denote groups $G_A$ and $G_B$ as follows:
$$
G_A=\{\sum_{j\in J} a_je_j \mid  a_j\in \mathbb{Z} \,\,{\rm and}\ \, J\subset \mathbb{N}^+ \,\,{\rm is\,\, a\,\, finite\,\, subset}\},
$$
$$
G_B=\{\sum_{j\in J} a_jf_j \mid  a_j\in \mathbb{Z} \,\,{\rm and}\ \, J\subset \mathbb{N}^+ \,\,{\rm is\,\, a\,\, finite\,\, subset}\}.
$$
Now choose positive real numbers $\lambda_j$ ($j\geq 1$) such that
$
 1,\lambda_1,\lambda_2,\ldots
$
are linearly independent over $\mathbb Q$.  Define injective
homomorphisms
$$
 \tau_A:G_A\longrightarrow\mathbb R,
 \quad
 \tau_B:G_B\longrightarrow\mathbb R
$$
by
$$\tau_A(e_0)=1,
 \quad
 \tau_A(e_j)=\lambda_j,
 \quad j\geq1,
$$
and
$$
 \tau_B(f_0)=1,
 \quad
 \tau_B(f_j)=\frac{\lambda_j}{p^j},
 \quad j\geq1.
$$
Equip the two groups with the strict positive cones
$$
 G_A^+ = \{0\}\cup\{g\in G_A:\tau_A(g)>0\},
$$
and
$$
 G_B^+=
 \{0\}\cup\{g\in G_B:\tau_B(g)>0\}.
$$
We use $e_0$ and $f_0$ as order units.
\end{definition}
\begin{lemma}\label{lem:rigid-dimension-groups}
The triples
$$
 (G_A,G_A^+,e_0)
 \quad\text{and}\quad
 (G_B,G_B^+,f_0)
$$
are simple scaled dimension groups.  Moreover,
$$
 \Aut(G_A,G_A^+,e_0)
 =
 \Aut(G_B,G_B^+,f_0)
 =
 \{\id\}.
$$
\end{lemma}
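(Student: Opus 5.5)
The plan is to reduce everything to the fact that the order on each group comes from an injective real-valued homomorphism. I read the indexing so that $e_0$ and $f_0$ are among the generators: the values $\tau_A(e_0)=1$ and $\tau_B(f_0)=1$ are prescribed and $e_0,f_0$ are used as order units. I treat $G_A$ and $G_B$ simultaneously, writing $G$, $\tau$, $u$ for either triple. For $G_B$ the generators have values $1,\lambda_1/p,\lambda_2/p^2,\ldots$, which are still linearly independent over $\mathbb Q$. So in both cases $\tau$ is injective on the free abelian group $G$, and $\tau(G)$ is a subgroup of $\mathbb R$ of rank at least two, hence dense.

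\emph{Step 1: $(G,G^+)$ is an ordered group, in fact totally ordered.} Since $\tau$ is injective, for $g\neq 0$ exactly one of $\tau(g)>0$ or $\tau(-g)>0$ holds. Hence $G^+\cap(-G^+)=\{0\}$, $G^++G^+\subseteq G^+$, and $G=G^+\cup(-G^+)$, so the order is total.
\begin{itemize}
\item Directedness and Riesz interpolation are automatic for a total order: given $a_1,a_2\le b_1,b_2$, take $c=\max(a_1,a_2)$.
\item Unperforation holds: if $ng\in G^+$ with $n\ge1$, then $\tau(g)=\tau(ng)/n\ge0$.
\end{itemize}
Since $G$ is countable, the Effros--Handelman--Shen characterization makes $(G,G^+)$ a dimension group.

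\emph{Step 2: simplicity and the order unit.} Let $g>0$ and $h\in G$. Choose $n$ with $n\tau(g)>\tau(h)$; then $ng-h\in G^+$. So every nonzero positive element is an order unit, which gives simplicity. In particular $u$ is an order unit, because $\tau(u)=1>0$.

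\emph{Step 3: trivial automorphism group.} Let $\alpha$ be an order automorphism with $\alpha(u)=u$. Then $s:=\tau\circ\alpha$ is a positive homomorphism $G\to\mathbb R$ with $s(u)=1$, i.e.\ a state. The key claim is that $\tau$ is the unique state. Suppose $s$ is any state, $g\in G$, and $m\in\mathbb Z$, $n\ge1$ with $m/n<\tau(g)$. Then $\tau(ng-mu)>0$, so $ng-mu\in G^+$ and $s(g)\ge m/n$. Symmetrically, $m/n>\tau(g)$ forces $s(g)\le m/n$. Letting $m/n\to\tau(g)$ gives $s(g)=\tau(g)$. Hence $\tau\circ\alpha=\tau$, and injectivity of $\tau$ yields $\alpha=\id$.

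None of these steps seems to be a real obstacle. The only point needing care is noticing that the strict order is total, which makes interpolation trivial, and that injectivity of $\tau$ comes from the rational independence of the generator values (rescaled by $p^{-j}$ in the $B$ case).
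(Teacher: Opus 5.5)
Your proof is correct and follows essentially the same route as the paper. Interpolation and simplicity come from the order being induced by the injective map $\tau$ into $\mathbb R$. Uniqueness of the normalized state follows by rational approximation via $ng-mu\in G^+$. Then $\tau\circ\alpha=\tau$ together with injectivity of $\tau$ forces $\alpha=\id$. The differences are cosmetic: you check unperforation and directedness explicitly, and you get interpolation from the order being total rather than from density, as the paper does.
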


\begin{proof}
Both $\tau_A(G_A)$ and $\tau_B(G_B)$ are dense ordered subgroups of
$\mathbb R$.  A dense subgroup of $\mathbb R$, equipped with the induced
order, has the Riesz interpolation property.  Every nonzero positive
element is an order unit, so both ordered groups are simple.

We show that $\tau_A$ is the unique normalized state on $G_A$.  Let
$\sigma$ be a normalized state and let $g\in G_A$.  If
$$
 \frac{a}{b}<\tau_A(g)<\frac{c}{d},
$$
where $a,b,c,d$ are integers with $b,d>0$, then
$$
 bg-ae_0, ce_0-dg\in G_A^+.
$$
Applying $\sigma$ gives
$$
 \frac{a}{b}<\sigma(g)<\frac{c}{d}.
$$
Approximating $\tau_A(g)$ from below and above by rational numbers yields
$$
 \sigma(g)=\tau_A(g).
 $$
Thus $\tau_A$ is the unique normalized state.  The same proof applies
to $\tau_B$.

If $\gamma$ is an order-unit-preserving automorphism of $G_A$, then
uniqueness of the normalized state gives
$$
 \tau_A\circ\gamma=\tau_A.
$$
Since $\tau_A$ is injective, $\gamma=\id$.  The argument for $G_B$ is
similar.
\end{proof}
\begin{definition}\rm\label{def alpha}
Define
$$
 \alpha:G_A\longrightarrow G_B
$$
by
$$
 \alpha(e_0)=f_0,
 \quad
 \alpha(e_j)=p^jf_j,
 \quad j\geq1.
$$
Then
$$
 \tau_B\circ\alpha=\tau_A.
$$
Consequently, $\alpha$ is positive, unital, and injective.
\end{definition}
\subsection{Construction of the algebras} \label{subsec:construction-A}

We now construct the simple unital  AD algebra of real rank zero needed in
Section 5.

For each $r\geq1$, denote
$$
H_r
:=
G_A/p^rG_A\oplus \mathbb{Z}_{p^r}.
$$
Consider the sequence
\begin{equation}
\label{eq:finite-stage-complex}
G_A
\xrightarrow{\rho_r}
H_r
\xrightarrow{\beta_r}
\mathbb{Z}_{p^r},
\end{equation}
where
$$
\rho_r(g)=
(g+p^rG_A,0)\quad
{\rm and}\quad
\beta_r(x,t)=t.
$$

Equip the graded ordinary group
$
G_A\oplus \mathbb{Z}_{p^r}
$
with the strict order
$$
(G_A\oplus \mathbb{Z}_{p^r})^+
=
\{(0,0)\}
\cup
\left\{
(g,t):
g\in G_A^+\setminus\{0\},
\ t\in \mathbb{Z}_{p^r}
\right\}.
$$
Equip $G_A\oplus H_r$
with the corresponding strict order over its $G_A$-component.

\begin{lemma}
\label{lem:finite-complex}
For each $r\geq1$, the sequence
\eqref{eq:finite-stage-complex} is a $p^r$-coefficient complex.
\end{lemma}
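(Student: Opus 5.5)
We verify the conditions (i)--(vii) of Definition~\ref{def:n-coefficient-complex} with $n=p^r$, $G_0=G_A$, $G_n=H_r$, $G_1=\mathbb Z_{p^r}$. The approach is direct: the algebraic conditions reduce to the explicit formulas for $\rho_r$ and $\beta_r$, and the order conditions reduce to the fact that both orders are strict over $G_A$, so positivity is decided entirely by the $G_A$-coordinate.

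\emph{Algebraic conditions.} Exactness at $H_r$ holds because $\beta_r\circ\rho_r(g)=\beta_r(g+p^rG_A,0)=0$. Conversely, if $\beta_r(x,t)=t=0$, then $(x,0)=\rho_r(\tilde x)$ for any lift $\tilde x\in G_A$ of $x$. For (i), $p^rH_r=0$ is clear, since both summands are annihilated by $p^r$. For (ii), we have $\ker\rho_r=p^rG_A$ by definition. Also $\operatorname{im}\beta_r=\mathbb Z_{p^r}=\mathbb Z_{p^r}[p^r]$, since $\beta_r$ is surjective.

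\emph{Order conditions (iii), (v), (vi).} For (iii), the strict orders on $G_A\oplus\mathbb Z_{p^r}$ and on $G_A\oplus H_r$ both restrict on $G_A$ to $G_A^+$. For (v), let $(g,\rho_r(h))$ be positive in $G_A\oplus\rho_r(G_A)$. Either it is $(0,0)$, which lifts to $(0,0)$, or $g>0$, in which case $(g,h)$ is a positive lift because the order on $G_A\oplus G_A$ is taken as the image order, i.e.\ $g>0$ alone decides positivity. Thus the order on $G_A\oplus\rho_r(G_A)$ is exactly the quotient order. Condition (vi) works the same way: $(g,t)$ with $g>0$ lifts to $(g,(0,t))$, which is positive, and $(0,0)$ lifts to $(0,0)$. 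The one subtle point is an element $(0,x)$ with $x\neq0$ in either group. The strict order declares it non-positive in both the source and the target, so the quotient-order condition is consistent.

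\emph{Riesz interpolation and unperforation, (iv) and (vii).} By Lemma~\ref{lem:rigid-dimension-groups}, $G_A$ is a simple dimension group, so it is unperforated. It is a standard fact that $G\oplus T$ with the strict order over a simple dimension group $G$, for $T$ a torsion group, has Riesz interpolation. To check this, suppose $a_i\le b_j$ with $i,j\in\{1,2\}$. If any inequality is an equality, the common element interpolates. Otherwise all $G_A$-components satisfy strict inequalities, and we choose an interpolant in $G_A$ strictly between them, using density of $\tau_A(G_A)$; its torsion coordinate is arbitrary. For weak unperforation, $G_A\oplus\mathbb Z_{p^r}$ modulo torsion is $G_A$, which is unperforated, so condition (i) of weak unperforation holds. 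For condition (ii), suppose $ng+mt\ge0$ with $g\ge0$. If $g\neq0$, take $t'=0$ and $t''=t$; then $g+t$ is positive since $g>0$. If $g=0$, then $mt\ge0$ forces $mt=0$, so take $t'=t$ and $t''=0$.

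\emph{Main obstacle.} The delicate step is the equality cases in Riesz interpolation, together with the degenerate case $g=0$ in weak unperforation. Both are handled by the fact that strict orders admit no nonzero positive torsion.
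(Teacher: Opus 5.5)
Your proof is correct and takes the same route as the paper's: you check (i)--(vii) directly, and the strict order over $G_A$ reduces every order condition to the $G_A$-coordinate. Your version supplies details the paper only asserts (exactness at $H_r$, the equality cases in interpolation, the case $g=0$ in weak unperforation), though the ``standard fact'' you quote is false for $G=\mathbb Z$ (take $(0,0),(0,1)\le(1,0),(1,1)$ in $\mathbb Z\oplus\mathbb Z_2$ with the strict order), so it is the density of $\tau_A(G_A)$, which your argument does use, that makes interpolation work.
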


\begin{proof}
It is obvious that
$p^rH_r=0$ and $\ker(\rho_r)=p^rG_A.$
Since $\beta_r$ is the projection onto $\mathbb{Z}_{p^r}$,
$
\operatorname{im}(\beta_r)
=
\mathbb{Z}_{p^r}
=
\mathbb{Z}_{p^r}[p^r].
$
Thus the algebraic exactness requirements in
Definition~\ref{def:n-coefficient-complex} hold.

The ordinary graded group $G_A\oplus \mathbb{Z}_{p^r}$, equipped with the strict order
over $G_A$, has the Riesz interpolation property because $G_A$ is a
dimension group and the torsion component is dominated by every nonzero
positive $G_A$-component.  The quotient-order conditions follow directly
from the definitions of $\rho_r$ and $\beta_r$.  Finally, $G_A$ is
unperforated, and the strict graded order makes $G_A\oplus \mathbb{Z}_{p^r}$ weakly
unperforated.
\end{proof}
By Corollary~\ref{cor:simple-unital-range}, for each $r\geq1$, there exists
a simple unital AD algebra of real rank zero $A_r$ such that
\begin{equation}
\bigl(
\mathrm K_0(A_r),
\mathrm K_0(A_r)_+,
[1_{A_r}]
\bigr)
\cong
(G_A,G_A^+,e_0),
\end{equation}
$$
\mathrm K_1(A_r)\cong \mathbb{Z}_{p^r},
$$
and
$$
\mathbf K(A_r;{p^r})
\cong
\left(
G_A
\xrightarrow{\rho_r}
H_r
\xrightarrow{\beta_r}
\mathbb{Z}_{p^r}
\right).
$$

Now we construct the connecting homomorphisms. Recall that
$$
\kappa_{mn, m}: \mathbb{Z}_m \rightarrow \mathbb{Z}_{mn}, \quad \kappa_{m n, m}([1]_m)=n[1]_{mn}.
$$
Define a graded homomorphism
$$
\theta_r:
\mathrm K_0(A_r)\oplus\mathrm K_1(A_r)
\longrightarrow
\mathrm K_0(A_{r+1})\oplus\mathrm K_1(A_{r+1})
$$
by
$$
\theta_r=\operatorname{id}_{G_A}\oplus\kappa_{p^{r+1},p^r}.
$$

Since $A_r$ and $A_{r+1}$ are AD algebras, they are separable nuclear
C$^*$-algebras in the bootstrap category.  By
Theorem~\ref{thm:UCT}, there exists
$$
x_r\in\mathrm{KK}(A_r,A_{r+1})
$$
such that
$$
\mathrm{K}_0(x_r)\oplus \mathrm{K}_1(x_r)=\theta_r.
$$
The class $x_r$ induces a $\Lambda$-linear morphism
$$
\Psi_r
:=
\underline{\mathrm K}(x_r):
\underline{\mathrm K}(A_r)
\longrightarrow
\underline{\mathrm K}(A_{r+1}).
$$
In particular, here, we choose the $x_r$ satisfying
$$
(\Psi_r)_{p^{r+1}}^0={\rm id}_{G_A/p^{r+1}G_A}\oplus \kappa_{p^{r+1},{p^r}}:\, \mathrm K_0(A_{r};\mathbb{Z}_ {p^{r+1}})\to \mathrm K_0(A_{r+1};\mathbb{Z}_{p^{r+1}}).
$$
\begin{lemma}
\label{lem:Gamma-positive}
The morphism $\Psi_r$ is positive and satisfies
$$
\Psi_r([1_{A_r}])=[1_{A_{r+1}}].
$$
\end{lemma}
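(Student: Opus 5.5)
The plan is to exploit the fact that $A_{r+1}$ is simple and has real rank zero, so that the positive cone $\underline{\mathrm K}(A_{r+1})_+$ can be described very explicitly. Since $A_{r+1}$ is simple, every nonzero projection $e$ generates the whole algebra, so $I_e=A_{r+1}$ and $\underline{\mathrm K}(I_e\mid A_{r+1})=\underline{\mathrm K}(A_{r+1})$. Hence
$$
\underline{\mathrm K}(A_{r+1})_+=\{0\}\cup\bigl\{x\in\underline{\mathrm K}(A_{r+1}): x_0\in\mathrm K_0^+(A_{r+1})\setminus\{0\}\bigr\},
$$
where $x_0$ denotes the $\mathrm K_0$-component. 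The same description holds for $A_r$. Positivity of a map into the cone is therefore decided entirely by the $\mathrm K_0$-component: a $\Lambda$-linear map is positive as soon as its $\mathrm K_0$-part sends nonzero positive elements to nonzero positive elements. This cone description is the strict order in the construction, and it matches the strict order imposed in Lemma~\ref{lem:finite-complex} and realized via Corollary~\ref{cor:simple-unital-range}.

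First, take $x\in\underline{\mathrm K}(A_r)_+$. If $x=0$, then $\Psi_r(x)=0$ is positive. Otherwise $x_0\in G_A^+\setminus\{0\}$, that is, $\tau_A(x_0)>0$. Since $\Psi_r$ is graded, the $\mathrm K_0$-component of $\Psi_r(x)$ is $(\Psi_r)_0^0(x_0)$. By the choice of $x_r$ this equals $\theta_r$ restricted to $\mathrm K_0$, which is $\operatorname{id}_{G_A}$, so it is $x_0$. Thus the $\mathrm K_0$-component of $\Psi_r(x)$ is nonzero positive, and $\Psi_r(x)\in\underline{\mathrm K}(A_{r+1})_+$ no matter what its torsion and coefficient components are.

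Second, for the unit: under the identifications $(\mathrm K_0(A_r),\mathrm K_0(A_r)_+,[1_{A_r}])\cong(G_A,G_A^+,e_0)$ and likewise for $A_{r+1}$, we have $[1_{A_r}]=e_0$. Then $\Psi_r([1_{A_r}])=\operatorname{id}_{G_A}(e_0)=e_0=[1_{A_{r+1}}]$.

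The main point to check carefully is the first step, the cone description. One must confirm that for a simple algebra of stable rank one the Dadarlat--Gong cone really imposes no constraint on the non-$\mathrm K_0$ components once $[e]\neq0$. This holds because $\underline{\mathrm K}(I_e\mid A)$ is all of $\underline{\mathrm K}(A)$ when $I_e=A$. Stable rank one holds for simple real rank zero AD algebras, which is standard. One also has to handle $[e]=0$: it forces $e=0$, so $I_e=0$ and the element is $0$. With these observations the lemma is immediate.
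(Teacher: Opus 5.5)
Your proof is correct and takes the same route as the paper. Simplicity of $A_r$ and $A_{r+1}$ gives $\underline{\mathrm{K}}(I_e\mid A_{r+1})=\underline{\mathrm{K}}(A_{r+1})$ for every nonzero $[e]$, so positivity in the Dadarlat--Gong cone is decided by the $\mathrm{K}_0$-component, which for $\Psi_r$ is $\operatorname{id}_{G_A}$ and therefore also fixes $e_0=[1]$. You are simply more explicit than the paper, for instance in disposing of the case $[e]=0$ via cancellation from stable rank one.
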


\begin{proof}
Its ordinary $\mathrm K_0$-component is the identity map on
$$
(G_A,G_A^+,e_0).
$$
It is therefore strictly positive and preserves the distinguished order
unit.

Since $A_r$ and $A_{r+1}$ are simple, every nonzero positive
$\mathrm K_0$-class has full ideal support.  Hence positivity of the
$\Lambda$-morphism is determined by its ordinary $\mathrm K_0$-component.
It follows that $\Psi_r$ preserves the Dadarlat--Gong positive cone and
the scale.
\end{proof}

By Theorem~\ref{thm:DG-existence} (see also a one sided version of Theorem 5.3 of \cite{Eilers1996}), there exists a unital
$*$-homomorphism
$$
\psi_r:A_r\longrightarrow A_{r+1}
$$
such that
$
\underline{\mathrm K}(\psi_r)=\Gamma_r.
$
In particular,
$$
\mathrm K_0(\psi_r)\oplus \mathrm K_1(\psi_r)=\theta_r.
$$
Since $A_r$ is simple and $\psi_r$ is unital, $\psi_r$ is injective.

Now we define
$$
A=\varinjlim \left( A_r,\psi_r\right).
$$

\begin{proposition}
\label{prop:properties-A}
The algebra $A$ is a simple unital  AD algebra of real rank zero, and
$$
\bigl(\mathrm K_0(A),\mathrm K_0(A)_+,[1_A]\bigr)
\cong
(G_A,G_A^+,e_0),
$$
while
$$
\mathrm K_1(A)
\cong
\mathbb Z(p^\infty).
$$
\end{proposition}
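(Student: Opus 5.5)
Since $A=\varinjlim(A_r,\psi_r)$, Proposition~\ref{prop:inductive-limit-facts} reduces the structural claims to properties of the building blocks. Each $A_r$ is a simple unital AD algebra of real rank zero, by Corollary~\ref{cor:simple-unital-range} applied to the $p^r$-coefficient complex of Lemma~\ref{lem:finite-complex}. Each $\psi_r$ is unital. Items (ii), (iii) and (iv) of Proposition~\ref{prop:inductive-limit-facts} then show that $A$ has real rank zero, is an AD algebra, and is simple. Unitality follows because all connecting maps are unital, so $1_A$ is the common image of the units $1_{A_r}$. Separability is inherited from the sequential limit of separable algebras.

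For the K-theory I use continuity, item (i). On $\mathrm K_0$, the connecting maps are $\mathrm K_0(\psi_r)=\operatorname{id}_{G_A}$ under the fixed identifications $\mathrm K_0(A_r)\cong G_A$. Hence $\mathrm K_0(A)\cong\varinjlim(G_A,\operatorname{id})=G_A$ as groups. For the order, recall that the positive cone of an inductive limit is the union of the images of the positive cones. Each image is exactly $G_A^+$, so $\mathrm K_0(A)_+\cong G_A^+$. Since $\mathrm K_0(\psi_r)$ sends $[1_{A_r}]\mapsto[1_{A_{r+1}}]$ by Lemma~\ref{lem:Gamma-positive}, and $[1_{A_1}]$ corresponds to $e_0$, we get $[1_A]\leftrightarrow e_0$. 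This gives the scaled ordered isomorphism with $(G_A,G_A^+,e_0)$.

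On $\mathrm K_1$, the system is $\mathbb Z_{p}\to\mathbb Z_{p^2}\to\mathbb Z_{p^3}\to\cdots$, with connecting maps $\kappa_{p^{r+1},p^r}$, i.e. $[1]_{p^r}\mapsto p[1]_{p^{r+1}}$. Each of these maps is injective. I identify $\mathbb Z_{p^r}$ with $p^{-r}\mathbb Z/\mathbb Z\subset\mathbb Z[\frac1p]/\mathbb Z$ via $[1]_{p^r}\mapsto p^{-r}+\mathbb Z$. Under this identification the connecting map becomes the inclusion, since $p\cdot p^{-(r+1)}=p^{-r}$. The limit is therefore the increasing union $\bigcup_r p^{-r}\mathbb Z/\mathbb Z=\mathbb Z(p^\infty)$.

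The main point needing care is the compatibility of identifications. The isomorphisms $\mathrm K_*(A_r)\cong G_A\oplus\mathbb Z_{p^r}$ must be fixed once and for all, with $\theta_r$ defined with respect to them, so that $\mathrm K_*(\psi_r)=\theta_r$ holds on the nose. This was arranged in the construction, since $\psi_r$ realizes $\Psi_r$, whose ordinary part is $\theta_r$. I would verify this first and then carry out the two direct-limit computations above.
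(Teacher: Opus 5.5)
Your proposal is correct and follows the same route as the paper. You get the structural properties from Proposition~\ref{prop:inductive-limit-facts}, with simplicity coming from unital connecting maps between simple algebras; $\mathrm K_0$ from the identity connecting maps on $(G_A,G_A^+,e_0)$; and $\mathrm K_1$ as the limit of the $\mathbb Z_{p^r}$ under $\kappa_{p^{r+1},p^r}$. Your explicit identification $\mathbb Z_{p^r}\cong p^{-r}\mathbb Z/\mathbb Z$ and your description of the limit positive cone as the union of the images of the cones only spell out details the paper leaves implicit.
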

\begin{proof}
By Proposition~\ref{prop:inductive-limit-facts}, $A$ is a unital
 AD algebra of real rank zero.  Since every $A_r$ is simple and every
$\psi_r$ is unital, the connecting maps are injective and full.
Therefore $A$ is simple.

At the level of  $\mathrm K_0$-group, all connecting maps are the identity.  Hence
$$
\mathrm K_0(A)
\cong
\varinjlim
\left(
G_A,\operatorname{id}_{G_A}
\right)
\cong
G_A.
$$
The order and the distinguished unit are preserved, so
$$
\bigl(
\mathrm K_0(A),
\mathrm K_0(A)_+,
[1_A]
\bigr)
\cong
(G_A,G_A^+,e_0).
$$

At the level of  $\mathrm{K}_1$-group, the inductive system is
$$
\mathbb Z_p
\xrightarrow{\kappa_{p^2,p}}
\mathbb Z_{p^2}
\xrightarrow{\kappa_{p^3,p^2}}
\mathbb Z_{p^3}
\xrightarrow{}
\cdots\rightarrow \mathbb{Z}(p^\infty).
$$
 Therefore
$
\mathrm K_1(A)
\cong
\mathbb{Z}(p^\infty).
$
\end{proof}

Denoted by $t_r\in\mathrm K_1(A)$ the image of $[1]_{p^r}\in \mathbb{Z}_{p^r}$.  Then
$$
\operatorname{ord}(t_r)=p^r
\quad
{\rm and}\quad
pt_{r+1}=t_r.
$$
Consequently,
$$
\mathrm K_1(A)[p^r]
=
\langle t_r\rangle
\cong
\mathbb Z_{p^r}.
$$

For every $m\geq2$, the coefficient exact sequences give
$$
0
\longrightarrow
G_A/mG_A
\longrightarrow
\mathrm K_0(A;\mathbb Z_m)
\xrightarrow{\beta_m^0}
\mathbb Z(p^\infty)[m]
\longrightarrow
0
$$
and
$$
0
\longrightarrow
\mathbb Z(p^\infty)/m\mathbb Z(p^\infty)
\longrightarrow
\mathrm K_1(A;\mathbb Z_m)
\longrightarrow
G_A[m]
\longrightarrow
0.
$$
Since $G_A$ is torsion free and $\mathbb Z(p^\infty)$ is divisible, then
$$
G_A[m]=0 \quad
{\rm and}\quad
\mathbb Z(p^\infty)/m\mathbb Z(p^\infty)=0.
$$
It follows that
$$
\mathrm K_1(A;\mathbb Z_m)=0.
$$

In particular, for every $s\geq1$,
$$
0
\longrightarrow
G_A/p^sG_A
\longrightarrow
\mathrm K_0(A;\mathbb Z_{p^s})
\xrightarrow{\beta_{p^s}^0}
\langle t_s\rangle
\longrightarrow
0.
$$

\begin{remark}
\label{rem:no-canonical-splitting}
The final short exact sequence splits as a sequence of abelian groups, but
the splitting is not natural. Thus the construction above gives the algebra $A$ and its ordinary K-groups without choosing any splitting.  Suppose we have
$$
\mathrm K_0(A;\mathbb Z_{p^s})
\cong
G_A/p^sG_A\oplus\langle t_s\rangle,
$$
the compatibility of coefficient transformation can be established separately. But we need to go from every coefficient to whole compatibility.
\end{remark}

Finally, Theorem~\ref{thm:DG-classification} shows that the ordered scaled
total K-theory of $A$ determines $A$ up to $*$-isomorphism within the class
of  AD algebras of real rank zero.
\begin{remark}
By the Effros--Handelman--Shen range theorem
\cite{EffrosHandelmanShen1980}, there exists a unital AF algebra $B$
such that
$$
\bigl(
\mathrm{K}_0(B),\mathrm{K}_0(B)_+,[1_B]
\bigr)
\cong
(G_B,G_B^+,u_B).
$$
As $(G_B,G_B^+)$ is simple, our $B$ is chosen simple.
\end{remark}

\section{On the Bockstein operations}

\subsection{The coefficient system}
We first recall a simplified version of the total K-theory by Dadarlat and Loring.

\begin{definition}\rm
  Let $P \subset \mathbb{N}$ be the set consisting of the positive powers of all the primes. Define
  $$
  F_P \underline{\rm K}(A)={\rm K}_*(A) \oplus \bigoplus_{k \in P} {\rm K}_*(A ; \mathbb{Z}_n).$$
\end{definition}
\begin{proposition} [\cite{DadarlatLoring1996B}]\label{Prime K}
Let $A \in \mathcal{N}$ and let $B$ be a $\sigma$-unital $C^*$-algebra. Then the natural restriction map
$$\operatorname{Hom}_{\Lambda}(\underline{\rm K}(A), \underline{\rm K}(B)) \rightarrow \operatorname{Hom}_{\Lambda}\left(F_P \underline{\rm K}(A), F_P \underline{\rm K}(B)\right)$$
is an isomorphism of groups.
\end{proposition}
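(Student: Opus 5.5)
The plan is to reduce everything to the primary decomposition of the cyclic coefficient groups. For $n\geq 2$ write $n=\prod_{i=1}^{s}p_i^{k_i}$ with distinct primes $p_i$. By the Chinese remainder theorem, $\mathbb Z_n\cong\bigoplus_i\mathbb Z_{p_i^{k_i}}$, and this isomorphism is realized by coefficient maps of exactly the type listed in Definition~\ref{def k-total}. In one direction use the reductions $\kappa_{p_i^{k_i},n}$. In the other use the inclusions $\kappa_{n,p_i^{k_i}}$ (multiplication by $n/p_i^{k_i}$), twisted by integers $c_i$ with $\sum_i c_i\,n/p_i^{k_i}\equiv 1 \pmod n$.

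Since these maps come from maps of Moore spaces, they induce natural transformations. The first step is therefore to show that
$$
\bigoplus_i \kappa^j_{p_i^{k_i},n}:\ \K_j(A;\mathbb Z_n)\longrightarrow \bigoplus_i \K_j(A;\mathbb Z_{p_i^{k_i}})
$$
is a natural isomorphism. Its inverse is $\sum_i c_i\,\kappa^j_{n,p_i^{k_i}}$, so both are built only from $\kappa$'s and integer multiples. To verify this, check the identities
$$
\kappa_{p_i^{k_i},n}\,\kappa_{n,p_l^{k_l}}=0 \quad (i\neq l),\qquad \sum_i c_i\,\kappa_{n,p_i^{k_i}}\kappa_{p_i^{k_i},n}=\id
$$
at the level of coefficient groups. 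The first follows because $\mathbb Z_{p_l^{k_l}}\to\mathbb Z_{p_i^{k_i}}$ is zero for $i\ne l$. The identities then transfer to K-theory by functoriality of $\K_*(A;-)$ in the coefficient group, as in \cite{Schochet1984}.

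Injectivity of the restriction map follows at once. If $\eta$ is a $\Lambda$-morphism vanishing on $F_P\underline{\rm K}(A)$, then for every $x\in \K_j(A;\mathbb Z_n)$ we have $\kappa_{p_i^{k_i},n}\eta(x)=\eta(\kappa_{p_i^{k_i},n}x)=0$ for all $i$. By the first step this forces $\eta(x)=0$.

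For surjectivity, take a $\Lambda$-morphism $\eta$ on $F_P\underline{\rm K}$ and define $\tilde\eta$ on $\K_j(A;\mathbb Z_n)$ as the conjugate of $\bigoplus_i\eta_{p_i^{k_i}}^j$ by the isomorphisms of the first step. It remains to check that $\tilde\eta$ commutes with every $\rho$, $\beta$ and $\kappa$. The strategy is to express each operation with composite indices through prime-power operations.
\begin{enumerate}[label=\textup{(\alph*)}]
\item $\rho_n$ corresponds to $\bigoplus_i\rho_{p_i^{k_i}}$.
\item $\beta_n$ corresponds to $\sum_i c_i\,\beta_n\kappa_{n,p_i^{k_i}}$. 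Each term is an integer multiple of $\beta_{p_i^{k_i}}$, via the standard relation between Bocksteins and $\kappa$ coming from the map of cofiber sequences.
\item A general $\kappa_{m,n}$ splits into its $p$-primary components. Each component is a $\kappa$ between powers of the same prime, composed with multiplication by an integer prime to $p$.
\end{enumerate}
Integer multiples commute with any group homomorphism, and $\eta$ commutes with prime-power operations. Hence $\tilde\eta$ commutes with all of $\Lambda$.

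The main obstacle is step (b) together with the bookkeeping for mixed $\kappa$'s. One must pin down the exact integer factors in the relations $\beta_{mn}\kappa_{mn,m}=\beta_m$ versus $n\beta_m$ and the analogous relation for the reduction maps, and confirm that the CRT decomposition is compatible with all of them simultaneously. I would settle this by computing on the Moore spaces $W_n$ directly: the required identities are identities of maps $\mathbb Z\to\mathbb Z_n\to\ldots$ of coefficient groups, lifted to KK-classes between $C_0(W_n)$'s. This is where I would rely on the explicit list of relations in \cite{DadarlatLoring1996B}.
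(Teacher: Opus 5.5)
Your argument is correct, and it is the primary-decomposition idea that the paper only sketches in the remark following the statement; the paper's own justification is just the citation to Dadarlat--Loring, and your version does not even use $A\in\mathcal N$ or $\sigma$-unitality of $B$. The obstacle you flag is harmless, and there is one small omission to fix:
\begin{itemize}
\item Only the integrality of the factors matters. In fact $\beta_n\kappa_{n,p^k}=\beta_{p^k}$ and $\kappa_{p^k,n}\rho_n=\rho_{p^k}$ hold by naturality of the Bockstein sequence.
\item The $\kappa$-identities transfer from coefficient groups to K-theory because the $\mathrm{KK}_0$-groups between the Moore-type algebras have no Ext term.
\item You should also record the diagonal identity $c_i\,\kappa_{p_i^{k_i},n}\kappa_{n,p_i^{k_i}}=\id$. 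Together with the off-diagonal vanishing, it is what makes $\sum_i c_i\kappa_{n,p_i^{k_i}}$ a two-sided inverse, and your construction of $\tilde\eta$ needs that.
\end{itemize}
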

\begin{remark}
Since every positive integer has a unique prime factorization, by Proposition \ref{Prime K}, suppose we have defined group homomorphisms between finite-coefficient K-theories  at all the coefficients in $P$, then their primary decompositions will give the homomorphisms at all the coefficients. Similarly, if all the primary decompositions is compatible with all coefficient transformations, then implies compatibility with every coefficient-change map.
\end{remark}
We have constructed the simple unital  AD algebra of real rank zero
$A$ and the simple unital AF algebra $B$ satisfying
$$
 \bigl(
 \K_0(A),\K_0(A)_+,[1_A]
 \bigr)
 \cong
 (G_A,G_A^+,e_0),
 \quad
 \K_1(A)\cong T,
$$
and
$$
 \bigl(
 \K_0(B),\K_0(B)_+,[1_B]
 \bigr)
 \cong
 (G_B,G_B^+,f_0),
 \quad
 \K_1(B)=0.
$$

For $r\geq1$, the coefficient exact sequence gives
$$
0
\longrightarrow
G_A/p^rG_A
\xrightarrow{(\rho_A)_{p^r}^0}
\mathrm K_0(A;\mathbb Z_{p^r})
\xrightarrow{(\beta_A)_{p^r}^0}
\langle t_r\rangle
\longrightarrow
0,
$$
and
$$
0
\longrightarrow
G_B/p^rG_B
\xrightarrow{(\rho_B)_{p^r}^0}
\mathrm K_0(B;\mathbb Z_{p^r})
\xrightarrow{(\beta_B)_{p^r}^0}
0.
$$
For convenience, we simply require the algebra $A$ satisfy that
$$
 \K_0(A;\mathbb Z_{p^r})\cong
  G_A/p^rG_A\oplus T[p^r], \quad
 \K_1(A;\mathbb Z_{p^r})=0,
$$
 the natrual Bockstein maps such that
$$
 (\rho_A)_{p^r}^0(a)=(a+p^rG_A,0),\quad
 (\beta_A)_{p^r}^0(x,t)=t,$$
and
$$
 (\rho_B)_{p^r}^0(b)=b+p^rG_B,
 \quad
 (\beta_B)_{p^r}^0=0.
$$

Now the natural maps $\kappa$ are as follows.

At the coefficients with $p$-power, we have
\begin{eqnarray*}
  (\kappa_A)^0_{p^{r+1},p^r}\,:\,\K_0(A;\mathbb Z_{p^r}) &
 \rightarrow &
 \K_0(A;\mathbb Z_{p^{r+1}}) \\
  (a+p^rG_A,t_r) &\mapsto & (pa+p^{r+1}G_A,pt_{r+1})
\end{eqnarray*}
\begin{eqnarray*}
  (\kappa_A)^0_{p^{r},p^{r+1}}\,:\,\K_0(A;\mathbb Z_{p^{r+1}}) &
 \rightarrow &
 \K_0(A;\mathbb Z_{p^{r}}) \\
(a+p^{r+1}G_A,t_{r+1}) &\mapsto & (a+p^{r}G_A,t_r)
\end{eqnarray*}
\begin{eqnarray*}
  (\kappa_B)^0_{p^{r+1},p^r}\,:\,\K_0(B;\mathbb Z_{p^r}) &
 \rightarrow &
 \K_0(B;\mathbb Z_{p^{r+1}}) \\
  b+p^rG_B &\mapsto & pb+p^{r+1}G_B
\end{eqnarray*}
\begin{eqnarray*}
  (\kappa_B)^0_{p^{r},p^{r+1}}\,:\,\K_0(B;\mathbb Z_{p^{r+1}}) &
 \rightarrow &
 \K_0(B;\mathbb Z_{p^{r}}) \\
b+p^{r+1}G_B &\mapsto & b+p^{r}G_A
\end{eqnarray*}
For every prime $q\neq p$, multiplication by $q$ is an automorphism of
$T$.
$$
\xymatrixcolsep{3pc}
\xymatrix{
{\mathrm{K}_0(A)}  \ar[r]^-{\times q^r}
& {\mathrm{K}_0(A)}  \ar[r]^-{\rho_{q^r}^{0}}
& {\mathrm{K}_0(A; \mathbb{Z}_{q^r})} \ar[d]_-{\beta_{q^r}^{0}}
 \\
{\mathrm{K}_1(A; \mathbb{Z}_{q^r})} \ar[u]_-{\beta_{q^r}^{1}}
& \mathbb{Z}(p^\infty) \ar[l]_-{\rho_{q^r}^{1}}
& \mathbb{Z}(p^\infty) \ar[l]_-{\times q^r}
}
$$
Hence
$$
 \K_0(A;\mathbb Z_{q^r})
 \cong
 G_A/q^rG_A,
 \quad
 \K_1(A;\mathbb Z_{q^r})=0.
$$
Similarly, we have
$$
 \K_0(B;\mathbb Z_{q^r})
 \cong
 G_B/q^rG_B,
 \quad
 \K_1(B;\mathbb Z_{q^r})=0.
$$

At the coefficients with  $q$-power ($q\neq p$), we have
\begin{eqnarray*}
  (\kappa_A)^0_{q^{r+1},q^r}\,:\,\K_0(A;\mathbb Z_{q^r}) &
 \rightarrow &
 \K_0(A;\mathbb Z_{q^{r+1}}) \\
  a+q^rG_A &\mapsto & qa+q^{r+1}G_A
\end{eqnarray*}
\begin{eqnarray*}
  (\kappa_A)^0_{q^{r},q^{r+1}}\,:\,\K_0(A;\mathbb Z_{q^{r+1}}) &
 \rightarrow &
 \K_0(A;\mathbb Z_{q^{r}}) \\
a+q^{r+1}G_A, &\mapsto & a+q^{r}G_A
\end{eqnarray*}
\begin{eqnarray*}
  (\kappa_B)^0_{q^{r+1},q^r}\,:\,\K_0(B;\mathbb Z_{q^r}) &
 \rightarrow &
 \K_0(B;\mathbb Z_{q^{r+1}}) \\
  b+q^rG_B &\mapsto & qb+q^{r+1}G_B
\end{eqnarray*}
\begin{eqnarray*}
  (\kappa_B)^0_{q^{r},q^{r+1}}\,:\,\K_0(B;\mathbb Z_{q^{r+1}}) &
 \rightarrow &
 \K_0(B;\mathbb Z_{q^{r}}) \\
b+q^{r+1}G_B &\mapsto & b+q^{r}G_A
\end{eqnarray*}

Note that for any $q\in P$, all the maps
$$
(\kappa_A)^1_{q^{r},q^{r+1}}, (\kappa_A)^1_{q^{r+1},q^{r}}, (\kappa_B)^1_{q^{r},q^{r+1}}, (\kappa_B)^1_{q^{r+1},q^{r}}
$$
are the zero maps.

\subsection{Two total K-theory maps}
\label{Alpha and Phi}
Recall the group homomorphism $\alpha$ defined in \ref{def alpha}. Let
$$
 \overline{\alpha}_{n}:
 G_A/p^rG_A\longrightarrow G_B/nG_B
$$
be the homomorphism induced by $\alpha$. 

For every $r\geq 1$, define
\begin{eqnarray*}
  d_r:T[p^r]&\rightarrow & G_B/p^rG_B \\
  t_r &\mapsto&  \sum_{j=1}^{r-1}p^jf_j+p^rG_B.
\end{eqnarray*}

Now we construct two graded maps
$
 \Phi_0,\Phi_1:
 \underline{\rm K}(A)\longrightarrow\underline{\rm K}(B)
$ such that
$$
(\Phi_0)_{p^r}^0((x,t))=\overline{\alpha}_{p^r}(x), \,\,(\Phi_1)_{p^r}^0((x,t))=\overline{\alpha}_{p^r}(x)+d_r(t)
$$
and
$$
(\Phi_0)_n^j=
(\Phi_1)_n^j=\begin{cases}
               \alpha, & \mbox{if } n=0,\,j=0 \\
               0, & \mbox{if } n=0,\, j=1 \\
               0, & \mbox{if } n=p^r,\,j=1 \\
               0, &  \mbox{if } n=q^r,\,j=1 \\
               \overline{\alpha}_{q^r}, &  \mbox{if } n=q^r,\,j=0,
             \end{cases},
$$
where $q$ is a prime with $q\neq p$.

By Proposition \ref{Prime K}, $\Phi_0$ and $\Phi_1$ can be extended to the map between total K-groups
 through the primary decompositions.

\begin{lemma}\label{lem:kappa-compatibility}
For every $r\geq1$ and $i=0,1$, the following diagrams
$$
\xymatrixcolsep{2pc}
\xymatrix{
{\,\,\mathrm{K}_0(A;\mathbb{Z}_{p^r})\cong G_A/p^rG_A\oplus T[p^r]\,\,}
\ar[r]^-{(\Phi_i)_{p^r}^0}\ar[d]_-{(\kappa_A)_{p^{r+1},p^r}^0}
& {\,\,\mathrm{K}_0(B;\mathbb{Z}_{p^r})\cong G_B/p^rG_B\,\,} \ar[d]_-{{(\kappa_B)_{p^{r+1},p^r}^0}}
  \\
{\,\,\mathrm{K}_0(A;\mathbb{Z}_{p^{r+1}})\cong G_A/p^{r+1}G_A\oplus T[p^{r+1}] \,\,}
\ar[r]_-{(\Phi_i)_{p^{r+1}}^0}
& {\,\,\mathrm{K}_0(B;\mathbb{Z}_{p^{r+1}})\cong G_B/p^{r+1}G_B\,\,}}
$$
and
$$
\xymatrixcolsep{2pc}
\xymatrix{
{\,\,\mathrm{K}_0(A;\mathbb{Z}_{p^r})\cong G_A/p^rG_A\oplus T[p^r]\,\,}
\ar[r]^-{(\Phi_i)_{p^r}^0}
& {\,\,\mathrm{K}_0(B;\mathbb{Z}_{p^r})\cong G_B/p^rG_B\,\,}
  \\
{\,\,\mathrm{K}_0(A;\mathbb{Z}_{p^{r+1}})\cong G_A/p^{r+1}G_A\oplus T[p^{r+1}] \,\,}
\ar[r]_-{(\Phi_i)_{p^{r+1}}^0} \ar[u]_-{(\kappa_A)_{p^{r+1},p^{r}}^0}
& {\,\,\mathrm{K}_0(B;\mathbb{Z}_{p^{r+1}})\cong G_B/p^{r+1}G_B\,\,}\ar[u]_-{{(\kappa_B)_{p^{r+1},p^{r}}^0}}}
$$
are commutative.
\end{lemma}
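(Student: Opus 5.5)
The plan is a direct verification on generators. Every element of $\mathrm K_0(A;\mathbb Z_{p^r})\cong G_A/p^rG_A\oplus T[p^r]$ has the form $(a+p^rG_A,\,m t_r)$ with $a\in G_A$ and $m\in\mathbb Z$. All maps in both squares are group homomorphisms, so it suffices to chase such an element around each square, separately for $i=0$ and $i=1$.

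First I will record that $d_r$ is a well-defined homomorphism. It is defined on the generator $t_r$ of the cyclic group $T[p^r]\cong\mathbb Z_{p^r}$, and $p^r\sum_{j=1}^{r-1}p^jf_j\in p^rG_B$, so it extends linearly. The same holds for $\overline{\alpha}_{p^r}$, which is well defined because $\alpha(p^rG_A)\subseteq p^rG_B$. I will also use the identity $pt_{r+1}=t_r$ in $T$, so that the upward map $\kappa_A$ sends $(a+p^rG_A,\,m t_r)$ to $(pa+p^{r+1}G_A,\,m p t_{r+1})$.

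For $i=0$ only the $G_A/p^rG_A$ components matter, since $\Phi_0$ ignores the torsion coordinate. Commutativity then reduces to two facts: $\alpha$ commutes with multiplication by $p$, and $\alpha$ commutes with reduction modulo $p^{r}$. Both are immediate because $\alpha$ is a homomorphism. For $i=1$ the $\overline{\alpha}$ parts commute in the same way, so by additivity only the torsion coordinate $t=m t_r$ (resp. $m t_{r+1}$) needs checking.

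In the first square, the path through $\mathrm K_0(B;\mathbb Z_{p^r})$ gives
$$m\,p\sum_{j=1}^{r-1}p^jf_j=m\sum_{j=1}^{r-1}p^{j+1}f_j \pmod{p^{r+1}G_B}.$$
The other path gives
$$d_{r+1}(m p t_{r+1})=m\,p\sum_{j=1}^{r}p^jf_j=m\sum_{j=1}^{r-1}p^{j+1}f_j+m\,p^{r+1}f_r.$$
The extra term $m\,p^{r+1}f_r$ lies in $p^{r+1}G_B$, so the two paths agree.

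In the second square, start from $(a+p^{r+1}G_A,\,m t_{r+1})$. The path through $\mathrm K_0(B;\mathbb Z_{p^{r+1}})$ gives $m\sum_{j=1}^{r}p^jf_j$ reduced modulo $p^r$. The other path gives $d_r(m t_r)=m\sum_{j=1}^{r-1}p^jf_j$. Again the only discrepancy is the term $m\,p^rf_r\in p^rG_B$, which vanishes after reduction.

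The only delicate point is bookkeeping in the first square. One must remember that the image of $t_r$ under the upward $\kappa_A$ is $pt_{r+1}$, and evaluate $d_{r+1}$ on it by linearity, not by the defining formula at level $r$. The key observation is that the top summand $p^{r}f_r$ of $d_{r+1}(t_{r+1})$, after multiplication by $p$ (first square) or reduction modulo $p^r$ (second square), always lands in the subgroup being quotiented out. Once this is noted, both squares commute for $i=0,1$ and all $r\geq1$.
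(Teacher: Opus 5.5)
Your proposal is correct and is the same direct element chase the paper uses: the $i=0$ case follows from additivity of $\alpha$, and for $i=1$ you compare $d_r$ with $d_{r+1}$ on the torsion coordinate. Your bookkeeping is more careful than the paper's. You correctly write $p\sum_{j=1}^{r-1}p^jf_j=\sum_{j=1}^{r-1}p^{j+1}f_j$ (the paper's displayed $\sum_{j=2}^{r}p^jf_j$ has an index slip), you explicitly discard the leftover term $p^{r+1}f_r\in p^{r+1}G_B$ (resp.\ $p^rf_r\in p^rG_B$), and you also carry out the second square, which the paper leaves as routine.
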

\begin{proof}
If $i=0$, the commutativity of the diagrams comes from the property of $\alpha$.

If $i=1$, in the first diagram, we choose  $(a+p^rG_A,t_r)\in {\rm K}(A; \mathbb{Z}_{p^r})$, then
\begin{eqnarray*}
  (\kappa_B)_{p^{r+1},p^r}^0\circ (\Phi_1)_{p^r}^0 ((a+p^rG_A,t_r)) &=&   (\kappa_B)_{p^{r+1},p^r}^0 (\overline{\alpha}_{p^r}(a+p^rG_A)+d_r(t_r)) \\
   &=&  (\kappa_B)_{p^{r+1},p^r}^0 (\alpha(a)+ \sum_{j=1}^{r-1}p^{j}f_j+p^rG_B)   \\
   &=& p\alpha(a)+ \sum_{j=2}^{r}p^{j}f_j+p^{r+1}G_B
\end{eqnarray*}
and
\begin{eqnarray*}
  (\Phi_1)_{p^{r+1}}^0 \circ (\kappa_A)_{p^{r+1},p^r}^0(a+p^rG_A,t_r) &=& (\Phi_1)_{p^{r+1}}^0 (pa+p^{r+1}G_A,pt_{r+1}) \\
   &=&  \overline{\alpha}_{p^{r+1}}(pa+p^{r+1}G_A)+d_{r+1}(pt_{r+1})   \\
   &=& p\alpha(a)+ \sum_{j=2}^{r}p^{j}f_j+p^{r+1}G_B.
\end{eqnarray*}
Then the first diagram commutes. Similarly, it is routine to check the commutativity of the second diagram.
\end{proof}

\begin{proposition}\label{prop:Phi-Lambda}
The maps $\Phi_0$ and $\Phi_1$ are positive, unital
$\Lambda$-homomorphisms.
\end{proposition}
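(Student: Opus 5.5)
We prove the proposition by checking, one ingredient at a time, everything needed for $\Phi_0$ and $\Phi_1$ to be positive, unital $\Lambda$-homomorphisms. By Proposition~\ref{Prime K} and the remark following it, a $\Lambda$-homomorphism is determined by its restriction to $F_P\underline{\mathrm K}$. So it suffices to verify compatibility with $\rho$, $\beta$ and $\kappa$ at the prime-power coefficients only. After that we check that the ordinary $\mathrm K_0$-part preserves order and unit, and transport positivity to the whole Dadarlat--Gong cone.

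\textbf{Step 1: $\rho$ and $\beta$ at coefficient $p^r$.} The $\mathrm K_1$-components vanish, and $\mathrm K_*(B;\mathbb Z_{p^r})$ has only the $\mathrm K_0$-part $G_B/p^rG_B$.
\begin{itemize}
\item For $\rho$: we have $(\Phi_i)^0_{p^r}((\rho_A)^0_{p^r}(a))=(\Phi_i)^0_{p^r}(a+p^rG_A,0)=\overline\alpha_{p^r}(a)$, because $d_r(0)=0$. This equals $(\rho_B)^0_{p^r}(\alpha(a))$.
\item For $\beta$: $(\beta_B)^0_{p^r}=0$ and $\mathrm K_1(B)=0$, so $\beta$-compatibility reduces to $0=0$.
\item The $\rho^1$ and $\beta^1$ maps start from or land in zero groups, or are sent to the zero group $\mathrm K_1(B)$, so these squares commute trivially.
\end{itemize}

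\textbf{Step 2: $\rho$, $\beta$ and $\kappa$ at coefficients $q^r$ with $q\neq p$.} Here $(\Phi_i)^0_{q^r}=\overline\alpha_{q^r}$ and all torsion data vanish. Every required square is therefore the reduction of $\alpha$ modulo $q^r$, and it commutes because $\alpha$ is a group homomorphism.

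\textbf{Step 3: $\kappa$ at coefficient $p^r$.} The $\kappa^0$ squares are exactly Lemma~\ref{lem:kappa-compatibility}. We only need to supply the second diagram, which is the computation we expect to be the delicate point. Start from $(a+p^{r+1}G_A,t_{r+1})$. One way round gives $\alpha(a)+\sum_{j=1}^{r}p^jf_j+p^rG_B$. Since $p^rf_r\in p^rG_B$, this equals $\alpha(a)+\sum_{j=1}^{r-1}p^jf_j+p^rG_B$, which is the other way round, $\overline\alpha_{p^r}(a)+d_r(t_r)$. We also check that $d_r$ is well defined, i.e.\ that $p^r d_r(t_r)=0$ in $G_B/p^rG_B$, which is clear. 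Finally, the $\kappa^1$ maps are zero on both sides.

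\textbf{Step 4: mixed coefficients $p^rq^s$.} These are handled by the primary decomposition as in the remark after Proposition~\ref{Prime K}.

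\textbf{Positivity and unitality.} On ordinary $\mathrm K_0$ both maps equal $\alpha$. By Definition~\ref{def alpha}, $\tau_B\circ\alpha=\tau_A$, so $\alpha$ is positive and $\alpha(e_0)=f_0$, which gives unitality. For the full cone $\underline{\mathrm K}(A)_+$ we use simplicity of $A$ and $B$, arguing as in Lemma~\ref{lem:Gamma-positive}.
\begin{itemize}
\item A positive element either has $[e]=0$, hence is $0$, or has $[e]$ a nonzero positive class.
\item In the second case $I_e=A$ and the element is arbitrary in the other coordinates. Its image has $\mathrm K_0$-part $\alpha([e])$, which is nonzero positive because $\alpha$ is injective and $\tau_B\alpha([e])>0$.
\item Since $B$ is simple, $I_{\alpha([e])}=B$, so the image lies in $\underline{\mathrm K}(B)_+$.
\end{itemize}
The main obstacle is Step 3, the $\kappa$-compatibility of the twisted map $\Phi_1$. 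Everything else is formal.
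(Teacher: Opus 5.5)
Your proposal is correct and follows essentially the same route as the paper: reduce to prime-power coefficients via Proposition~\ref{Prime K}, check the $\rho$/$\beta$ ladders at $p^r$ and $q^r$, and obtain $\kappa$-compatibility at $p^r$ from Lemma~\ref{lem:kappa-compatibility} and trivially at $q^r$; you also usefully write out the second square, which the paper only calls routine. Positivity and unitality then come from $\tau_B\circ\alpha=\tau_A$ together with simplicity of $A$ and $B$, exactly as in the paper, with your handling of the case $[e]=0$ in the Dadarlat--Gong cone being slightly more explicit.
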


\begin{proof}
By Proposition \ref{Prime K}, we only need to check the commutativity of all the coefficients in $P$.

For $i=0,1$ and $q^r\in P$,  consider the  following two kinds of diagrams
$$
\xymatrixcolsep{2.5pc}
\xymatrix{
{\mathrm{K}_0(A)} \ar[d]^-{(\Phi_i)_0^0} \ar[r]^-{\times {q^r}}
& {\mathrm{K}_0(A)} \ar[d]^-{(\Phi_i)_0^0} \ar[r]^-{(\rho_A)_{{q^r}}^0}
& {\mathrm{K}_0(A;\mathbb{Z}_{q^r})}  \ar[r]^-{(\beta_A)_{{q^r}}^0}
\ar[d]^-{(\Phi_i)_{q^r}^0}
& 
 {\mathrm{K}_1(A)} \ar[d]^-{(\Phi_i)_{0}^1} \ar[r]^-{\times {q^r}}
& {\mathrm{K}_1(A)} \ar[d]^-{(\Phi_i)_{0}^1}
 \\
{\mathrm{K}_0(B)} \ar[r]_-{\times {q^r}}
& {\mathrm{K}_0(B)} \ar[r]_-{(\rho_B)_{{q^r}}^0}
& {\mathrm{K}_0(B; \mathbb{Z}_{q^r})} \ar[r]_-{(\beta_B)_{{q^r}}^0}
& 
{\mathrm{K}_{1}(B)} \ar[r]_-{\times {q^r}}
& {\mathrm{K}_{1}(B)}
}
$$
\begin{center}
  ($q_r,0$)-diagram
\end{center}
and
$$
\xymatrixcolsep{2.5pc}
\xymatrix{
{\mathrm{K}_1(A)} \ar[d]^-{(\Phi_i)_0^1} \ar[r]^-{\times {q^r}}
& {\mathrm{K}_1(A)} \ar[d]^-{(\Phi_i)_0^1} \ar[r]^-{(\rho_A)_{{q^r}}^1}
& {\mathrm{K}_1(A;\mathbb{Z}_{q^r})}  \ar[r]^-{(\beta_A)_{{q^r}}^1}
\ar[d]^-{(\Phi_i)_{q^r}^1}
& 
 {\mathrm{K}_0(A)} \ar[d]^-{(\Phi_i)_{0}^0} \ar[r]^-{\times {q^r}}
& {\mathrm{K}_0(A)} \ar[d]^-{(\Phi_i)_{0}^0}
 \\
{\mathrm{K}_1(B)} \ar[r]_-{\times {q^r}}
& {\mathrm{K}_1(B)} \ar[r]_-{(\rho_B)_{{q^r}}^1}
& {\mathrm{K}_1(B; \mathbb{Z}_{q^r})} \ar[r]_-{(\beta_B)_{{q^r}}^1}
& 
{\mathrm{K}_{0}(B)} \ar[r]_-{\times {q^r}}
& {\mathrm{K}_{0}(B)}.
}
$$
\vskip -2mm
\begin{center}
  ($q_r,1$)-diagram
\end{center}
The compatibility of $\Phi_i$ and the operations $\rho, \beta$ follows directly from the commutativity of all these diagrams.

If $q=p$, the ($p_r,0$)-diagram is
$$
\xymatrixcolsep{2pc}
\xymatrix{
{G_A} \ar[d]^-{\alpha} \ar[r]^-{\times {p^r}}
& {G_A} \ar[d]^-{\alpha} \ar[r]^-{(\rho_A)_{{p^r}}^0}
& {G_A/p^rG_A\oplus T[p^r]}  \ar[r]^-{(\beta_A)_{{p^r}}^0}
\ar[d]^-{(\Phi_i)_{p^r}^0}
& 
 {\mathbb{Z}(p^{\infty})} \ar[d]^-{0} \ar[r]^-{\times {p^r}}
& {\mathbb{Z}(p^{\infty})} \ar[d]^-{0}
 \\
{G_B} \ar[r]_-{\times {p^r}}
& {G_B} \ar[r]_-{(\rho_B)_{{p^r}}^0}
& {G_A/p^rG_A\oplus T[p^r]} \ar[r]_-{(\beta_B)_{{p^r}}^0}
& 
{0} \ar[r]_-{\times {p^r}}
& {0}.
}
$$
It is easily seen that
$$
 (\Phi_i)_{p^r}^0\circ(\rho_A)_{p^r}^0(g)
=\overline{\alpha}_{p^r}(g+p^rG_A)=\alpha(g)+p^rG_B,
$$
and
$$
(\rho_B)_{p^r}^0\circ(\Phi_i)_{p^r}^0(g)= (\rho_B)_{p^r}^0(\alpha(g))=\alpha(g)+p^rG_B.
$$
So the ($p_r,0$)-diagram commutes.

Now the ($p_r,1$)-diagram is
$$
\xymatrixcolsep{3pc}
\xymatrix{
{\mathbb{Z}(p^{\infty})} \ar[d]^-{0} \ar[r]^-{\times {p^r}}
& {\mathbb{Z}(p^{\infty})} \ar[d]^-{0} \ar[r]^-{(\rho_A)_{{p^r}}^1}
& {0}  \ar[r]^-{(\beta_A)_{{p^r}}^1}
\ar[d]^-{0}
& 
 {G_A} \ar[d]^-{\alpha} \ar[r]^-{\times {p^r}}
& {G_A} \ar[d]^-{\alpha}
 \\
{0} \ar[r]_-{\times {p^r}}
& {0} \ar[r]_-{(\rho_B)_{{p^r}}^1}
& {0} \ar[r]_-{(\beta_B)_{{p^r}}^1}
& 
{G_B} \ar[r]_-{\times {p^r}}
& {G_B},
}
$$
and it commutes naturally.

If $q\neq p$, the ($q^r,0$)-diagram is
$$
\xymatrixcolsep{3pc}
\xymatrix{
{G_A} \ar[d]^-{\alpha} \ar[r]^-{\times {q^r}}
& {G_A} \ar[d]^-{\alpha} \ar[r]^-{(\rho_A)_{{q^r}}^0}
& {G_A/q^rG_A}  \ar[r]^-{(\beta_A)_{{q^r}}^0}
\ar[d]^-{\overline{\alpha}_{q^r}}
& 
 {\mathbb{Z}(q^{\infty})} \ar[d]^-{0} \ar[r]^-{\times {q^r}}
& {\mathbb{Z}(q^{\infty})} \ar[d]^-{0}
 \\
{G_B} \ar[r]_-{\times {q^r}}
& {G_B} \ar[r]_-{(\rho_B)_{{q^r}}^0}
& {G_B/q^rG_B} \ar[r]_-{(\beta_B)_{{q^r}}^0}
& 
{0} \ar[r]_-{\times {q^r}}
& {0}.
}
$$
Then the fact  ensures the commutativity of the ($q^r,0$)-diagram.

Now the ($q_r,1$)-diagram is
$$
\xymatrixcolsep{3pc}
\xymatrix{
{\mathbb{Z}(p^{\infty})} \ar[d]^-{0} \ar[r]^-{\times {q^r}}
& {\mathbb{Z}(p^{\infty})} \ar[d]^-{0} \ar[r]^-{(\rho_A)_{{q^r}}^1}
& {0}  \ar[r]^-{(\beta_A)_{{q^r}}^1}
\ar[d]^-{0}
& 
 {G_A} \ar[d]^-{\alpha} \ar[r]^-{\times {q^r}}
& {G_A} \ar[d]^-{\alpha}
 \\
{0} \ar[r]_-{\times {q^r}}
& {0} \ar[r]_-{(\rho_B)_{{q^r}}^1}
& {0} \ar[r]_-{(\beta_B)_{{q^r}}^1}
& 
{G_B} \ar[r]_-{\times {q^r}}
& {G_B},
}
$$
and it commutes naturally.

Then we obtain the compatibility with $\rho$ and $\beta$. Now we check
the compatibility with $\kappa$.

Consider the following two kinds of diagrams:
$$
\xymatrixcolsep{2pc}
\xymatrix{
{\,\,\mathrm{K}_0(A;\mathbb{Z}_{q^r})\,\,} \ar[r]^-{(\kappa_A)_{q^{r+1},q^r}^{0}}\ar[d]_-{(\Phi_i)_{q^r}^0}
& {\,\,\mathrm{K}_0(A;\mathbb{Z}_{q^{r+1}})\,\,} \ar[d]_-{ (\Phi_i)_{q^{r+1}}^0}
 \\
{\,\,\mathrm{K}_0(B;\mathbb{Z}_{q^r}) \,\,}\ar[r]_-{(\kappa_B)_{q^{r+1},q^r}^{0}}
& {\,\,\mathrm{K}_0(B;\mathbb{Z}_{q^{r+1}})\,\,}}
\,\,\,\,
{\rm and}
\,\,\,\,
\xymatrixcolsep{2pc}
\xymatrix{
{\,\,\mathrm{K}_0(A;\mathbb{Z}_{q^{r+1}})\,\,} \ar[r]^-{(\kappa_A)_{q^{r},q^{r+1}}^{0}}\ar[d]_-{(\Phi_i)_{q^{r+1}}^0}
& {\,\,\mathrm{K}_0(A;\mathbb{Z}_{q^{r}}).\,\,} \ar[d]_-{ (\Phi_i)_{q^{r}}^0}
 \\
{\,\,\mathrm{K}_0(B;\mathbb{Z}_{q^{r+1}}) \,\,}\ar[r]_-{(\kappa_B)_{q^{r},q^{r+1}}^{0}}
& {\,\,\mathrm{K}_0(B;\mathbb{Z}_{q^{r}})\,\,}}
$$

If $q=p$, then the commutativity of the above diagrams follows from Lemma \ref{lem:kappa-compatibility}.

If $q\neq p$, the above two diagrams become
$$
\xymatrixcolsep{2pc}
\xymatrix{
{\,\,G_A/q^rG_A\,\,} \ar[r]^-{\rm }\ar[d]_-{\overline{\alpha}_{q^r}}
& {\,\,G_A/q^{r+1}G_A\,\,} \ar[d]_-{ \overline{\alpha}_{q^{r+1}}}
 \\
{\,\,G_B/q^rG_B \,\,}\ar[r]_-{\rm }
& {\,\,G_B/q^{r+1}G_B\,\,}}
\,\,\,\,
{\rm and}
\,\,\,\,
\xymatrixcolsep{2pc}
\xymatrix{
{\,\,G_A/q^{r+1}G_A\,\,} \ar[r]^-{\rm }\ar[d]_-{\overline{\alpha}_{q^{r+1}}}
& {\,\,G_A/q^rG_A,\,\,} \ar[d]_-{\overline{\alpha}_{q^r}}
 \\
{\,\,G_B/q^{r+1}G_B\,\,}\ar[r]_-{\rm }
& {\,\,G_B/qG_B\,\,}}
$$
which commute naturally.

Then we obtain the comparability with $\kappa_{q^r}^0$. Since all the maps
$$
(\kappa_A)^1_{q^{r},q^{r+1}}, (\kappa_A)^1_{q^{r+1},q^{r}}, (\kappa_B)^1_{q^{r},q^{r+1}}, (\kappa_B)^1_{q^{r+1},q^{r}}
$$
are the zero maps, it is obvious that the following diagrams commute.
$$
\xymatrixcolsep{2pc}
\xymatrix{
{\,\,\mathrm{K}_1(A;\mathbb{Z}_{q^r})\,\,} \ar[r]^-{0}\ar[d]_-{0}
& {\,\,\mathrm{K}_1(A;\mathbb{Z}_{q^{r+1}})\,\,} \ar[d]_-{ 0}
 \\
{\,\,\mathrm{K}_1(B;\mathbb{Z}_{q^r}) \,\,}\ar[r]_-{0}
& {\,\,\mathrm{K}_1(B;\mathbb{Z}_{q^{r+1}})\,\,}}
,\,
{\rm }
\,\,\,\,
\xymatrixcolsep{2pc}
\xymatrix{
{\,\,\mathrm{K}_1(A;\mathbb{Z}_{q^{r+1}})\,\,} \ar[r]^-{0}\ar[d]_-{0}
& {\,\,\mathrm{K}_1(A;\mathbb{Z}_{q^{r}}).\,\,} \ar[d]_-{0}
 \\
{\,\,\mathrm{K}_1(B;\mathbb{Z}_{q^{r+1}}) \,\,}\ar[r]_-{0}
& {\,\,\mathrm{K}_1(B;\mathbb{Z}_{q^{r}})\,\,}}
$$

Now we conclude that $\Phi_0,\Phi_1$ are $\Lambda$-homomorphism.

Both maps have the same $\K_0$-component map $\alpha$, which is strictly positive and unital. Since $A$ and $B$ are simple, every nonzero positive $\K_0$-class has full ideal support. Hence they both preserve the Dadarlat--Gong order. By the way, the maps also preserve the distinguished unit.
\end{proof}

\subsection{Conjugacy of the morphisms}
\label{sec:local-conjugacy}

For $r\geq1$, define
\begin{eqnarray*}
\ell_r:T[p^r]&\rightarrow& G_A/p^rG_A.\\
  t_r &\mapsto&  \sum_{j=1}^{r-1}e_j+p^rG_A
\end{eqnarray*}
Then
$$
 \overline{\alpha}_r\circ \ell_r(t_r)
 =
 \sum_{j=1}^{r-1}p^jf_j+p^rG_B
 =
 d_r(t_r).
$$

Define a shear map
\begin{eqnarray*}
 U_r:
 G_A/p^rG_A\oplus T[p^r]
 &
 \rightarrow
 &
 G_A/p^rG_A\oplus T[p^r]\\
  (x,t) &\mapsto&  (x-\ell_r(t),t).
\end{eqnarray*}
\begin{lemma}\label{lem:local-shear}
For every $r\geq1$, the map $U_r$ is an automorphism satisfying
$$
 U_r\circ(\rho_A)_{p^r}^0=(\rho_A)_{p^r}^0,
$$
$$
 (\beta_A)_{p^r}^0\circ U_r=(\beta_A)_{p^r}^0,
$$
and
$$
 (\Phi_1)_{p^r}^0\circ U_r=(\Phi_0)_{p^r}^0.
$$
\end{lemma}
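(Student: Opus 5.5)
The plan is a direct verification in the split coordinates $\K_0(A;\mathbb Z_{p^r})\cong G_A/p^rG_A\oplus T[p^r]$ fixed in the coefficient-system subsection. In those coordinates $(\rho_A)_{p^r}^0(a)=(a+p^rG_A,0)$, $(\beta_A)_{p^r}^0(x,t)=t$, and $(\Phi_i)_{p^r}^0$ are given explicitly. The map $U_r$ is a shear, so everything reduces to three short computations plus a well-definedness check.

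First I check that $\ell_r$ is a well-defined homomorphism. Since $T[p^r]=\langle t_r\rangle$ is cyclic of order $p^r$, it suffices that $p^r\ell_r(t_r)=p^r\sum_{j=1}^{r-1}e_j\in p^rG_A$, which is clear. Then $U_r$ is additive, being $\id$ minus a homomorphism composed with the projection. Its inverse is $(x,t)\mapsto(x+\ell_r(t),t)$, so $U_r$ is an automorphism.

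Next I verify the first two identities.
\begin{itemize}
\item For the $\rho$ identity: $U_r(\rho_A)_{p^r}^0(a)=U_r(a+p^rG_A,0)=(a+p^rG_A-\ell_r(0),0)=(\rho_A)_{p^r}^0(a)$.
\item For the $\beta$ identity: $(\beta_A)_{p^r}^0U_r(x,t)=t=(\beta_A)_{p^r}^0(x,t)$, since $U_r$ leaves the torsion coordinate unchanged.
\end{itemize}

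For the third identity, take $(x,t)$ and compute
\[
(\Phi_1)_{p^r}^0U_r(x,t)=\overline{\alpha}_{p^r}(x)-\overline{\alpha}_{p^r}(\ell_r(t))+d_r(t).
\]
So it suffices to show $\overline{\alpha}_{p^r}\circ\ell_r=d_r$ on $T[p^r]$. By additivity, it is enough to check this on the generator $t_r$. There it is the displayed computation
\[
\overline{\alpha}_{p^r}\Bigl(\sum_{j=1}^{r-1}e_j\Bigr)=\sum_{j=1}^{r-1}p^jf_j+p^rG_B,
\]
which uses $\alpha(e_j)=p^jf_j$ from Definition~\ref{def alpha}. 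The result is $\overline{\alpha}_{p^r}(x)=(\Phi_0)_{p^r}^0(x,t)$.

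The only point needing care is that $\overline{\alpha}_{p^r}$ is well defined. This holds because $\alpha(p^rG_A)\subseteq p^rG_B$. I also read the notation $\overline{\alpha}_r$ in the preceding display as $\overline{\alpha}_{p^r}$. No other obstacle is expected; the lemma is purely algebraic at a fixed coefficient $p^r$.
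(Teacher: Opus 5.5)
Your proposal is correct and follows the paper's proof essentially verbatim: you write down the explicit inverse of $U_r$, check the $\rho$ and $\beta$ identities directly in coordinates, and derive the third identity from $\overline{\alpha}_{p^r}\circ\ell_r=d_r$ on the generator $t_r$. Your well-definedness checks for $\ell_r$ and $\overline{\alpha}_{p^r}$, and your reading of $\overline{\alpha}_r$ as $\overline{\alpha}_{p^r}$, are details the paper leaves implicit.
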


\begin{proof}
The inverse of $U_r$ is
$$
 U_r^{-1}(x,t)=(x+\ell_r(t),t).
$$
Moreover,
$$
 U_r(g+p^rG_A,0)
 =
 (g+p^rG_A,0),
$$
so $U_r$ preserves $(\rho_A)_{p^r}^0$.

For the second formula, we see that
$$
 (\beta_A)_{p^r}^0\circ U_r(x,t)
 =
 t
 =
 (\beta_A)_{p^r}^0(x,t).
$$
Finally,
\begin{align*}
 (\Phi_1)_{p^r}^0\circ U_r(x,t)
 &=
 \overline{\alpha}_r(x-\ell_r(t))+d_r(t) \\
 &=
 \overline{\alpha}_r(x)
 -\overline{\alpha}_r(\ell_r(t))
 +d_r(t) \\
 &=
 \overline{\alpha}_r(x) \\
 &=
 (\Phi_0)_{p^r}^0(x,t).
\end{align*}
\end{proof}
\begin{notion}\rm \label{S and kappa}
For  a general positive integer $n$, set
$$
S_n=\begin{cases}
               U_r, & \mbox{if }  p^r\mid n, p^{r+1}\nmid n \\
               id, & otherwise
             \end{cases},
$$
 Then $(\Phi_1)_n^0\circ S_n=(\Phi_0)_n^0$ for every $n\geq2$.  Each $S_n$ preserves the maps $\rho_n^0$ and
$\beta_n^0$.

We note that the family $(S_n)$ does not preserve the operation $\kappa$.  Consider the inclusion
$$
 (\kappa_A)_{p^{r+1},p^r}^0:
 \K_0(A;\mathbb Z_{p^r})
 \longrightarrow
 \K_0(A;\mathbb Z_{p^{r+1}}),
$$
we have
\begin{align*}
 (\kappa_A)_{p^{r+1},p^r}^0\circ S_{p^r}(0,t_r)
 &=
 (\kappa_A)_{p^{r+1},p^r}^0(
 -\sum_{j=1}^{r-1}e_j+p^rG_A,t_r
 ) \\
 &=
(
 -p\sum_{j=1}^{r-1}e_j+p^{r+1}G_A,t_r
),
\end{align*}
whereas
\begin{align*}
 S_{p^{r+1}}\circ(\kappa_A)_{p^{r+1},p^r}^0(0,t_r)
 &=
 S_{p^{r+1}}(0,pt_{r+1}) \\
 &=
(
 -p\sum_{j=1}^{r}e_j+p^{r+1}G_A,t_r
 ).
\end{align*}
Note that
$$
 (pe_r+p^{r+1}G_A,0)\neq 0\,\,\,\, {\rm in}\,\,\,\,G_A/p^{r+1}G_A\oplus T[p^{r+1}].
$$
Therefore,
$$ S_{p^{r+1}}\circ(\kappa_A)_{p^{r+1},p^r}^0\neq (\kappa_A)_{p^{r+1},p^r}^0\circ S_{p^r}.$$
\end{notion}
\subsection{No coherent conjugacy exists}
\label{sec:no-global-conjugacy}

We now prove that no alternative coherent family exists.

\begin{theorem}\label{thm:no-global}
There does not exist ordered scaled $\Lambda$-automorphism
$$
 \Gamma:\underline{\mathrm{K}}(A)\longrightarrow\underline{\mathrm{K}}(A)
$$
such that
$$
 \Phi_1\circ\Gamma=\Phi_0.
$$
\end{theorem}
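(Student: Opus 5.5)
The strategy is to use the rigidity of the ordinary $\mathrm K_0$-group to pin down $\Gamma$ almost completely. What remains of $\Gamma$ is a family of ``correction terms'' $c_r\colon T[p^r]\to G_A/p^rG_A$. The equation $\Phi_1\circ\Gamma=\Phi_0$ forces these terms to contain $-u\sum_{j<r}e_j$ modulo $\ker\overline{\alpha}_{p^r}$, for a $p$-adic unit $u$. The $\kappa$-compatibility then forces the $c_r(t_r)$ to stabilise modulo $p$. These two requirements are incompatible because elements of $G_A$ have finite support.

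\emph{Step 1: normal form of $\Gamma$.} Suppose such a $\Gamma$ exists. Its restriction to $\mathrm K_0(A)=G_A$ is an order-unit-preserving automorphism, so it is $\id$ by Lemma~\ref{lem:rigid-dimension-groups}. On $\mathrm K_1(A)=T$, $\Gamma$ is an automorphism of $\mathbb Z(p^\infty)$, so $\Gamma(t_r)=u\,t_r$ for all $r$, where $u$ is a $p$-adic unit; in particular $u\not\equiv0 \pmod p$. On $\mathrm K_0(A;\mathbb Z_{p^r})\cong G_A/p^rG_A\oplus T[p^r]$ we use the split coordinates fixed above. Compatibility with $\rho$ shows that $\Gamma$ is the identity on the first summand. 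Compatibility with $\beta$ shows that the second coordinate of the image is $ut$. Hence
$$\Gamma_{p^r}^0(x,t)=(x+c_r(t),\,ut)$$
for some homomorphism $c_r\colon T[p^r]\to G_A/p^rG_A$.

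\emph{Step 2: consequence of $\Phi_1\Gamma=\Phi_0$.} Evaluate both sides on $(0,t_r)$. This gives
$$\overline{\alpha}_{p^r}(c_r(t_r))=-u\,d_r(t_r)=-u\sum_{j=1}^{r-1}p^jf_j+p^rG_B.$$
Since $\alpha(e_j)=p^jf_j$, the kernel of $\overline{\alpha}_{p^r}$ consists of the classes $\sum a_je_j$ with $p^{r-j}\mid a_j$ for $j<r$ (and $a_j$ arbitrary for $j\ge r$). Consequently, for every $j<r$, the $e_j$-coefficient of any lift of $c_r(t_r)$ is congruent to $-u$ modulo $p^{r-j}$. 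In particular it is nonzero modulo $p$.

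\emph{Step 3: consequence of $\kappa$-compatibility.} Apply $\Gamma$ to both sides of $(\kappa_A)^0_{p^{r+1},p^r}(0,t_r)=(0,pt_{r+1})$, using the formula for $\kappa_A$ given earlier. This gives
$$p\,c_{r+1}(t_{r+1})\equiv p\,c_r(t_r)\pmod{p^{r+1}G_A}.$$
Since $G_A$ is torsion free, it follows that $c_{r+1}(t_{r+1})\equiv c_r(t_r)\pmod{p^r}$. Chaining these congruences, all $c_R(t_R)$ with $R\ge1$ have the same reduction modulo $pG_A$, namely $c_1(t_1)\in G_A/pG_A$. Lift this reduction to an element of $G_A$ with finite support $J$. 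Choose $j\notin J$ and then $R>j$. The $e_j$-coefficient of $c_R(t_R)$ is $\equiv0\pmod p$ by Step 3, but $\equiv-u\not\equiv0\pmod p$ by Step 2. This is a contradiction, so no such $\Gamma$ exists.

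The main obstacle I expect is Step 1. There one has to be careful that the abstract $\Lambda$-automorphism really has the shear form in the chosen split coordinates, with $\kappa$ acting on the $T$-coordinate as stated. One also has to check that working with the coefficients in $P$ suffices, by Proposition~\ref{Prime K}. Once the normal form is established, Steps 2 and 3 are direct computations.
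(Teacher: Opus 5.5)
Your proposal is correct and follows essentially the same route as the paper. It uses the same normal form $\Gamma_{p^r}^0(x,t)=(x+h_r(t),u\,t)$, derives the same congruences $c_{r,j}\equiv -u \pmod{p^{r-j}}$ for $1\le j\le r-1$ from $\Phi_1\Gamma=\Phi_0$, and ends with the same finite-support contradiction. The only difference is in how you get stabilisation modulo $p$: you use the coefficient inclusion $(\kappa_A)^0_{p^{r+1},p^r}$ and cancel a factor $p$ using torsion-freeness of $G_A$, whereas the paper applies the reduction $(\kappa_A)^0_{p,p^r}$ directly to obtain $h_r(t_r)\equiv h_1(t_1)\pmod p$. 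One small fix: the final index $j\notin J$ must be chosen with $j\ge 1$, because for $j=0$ the constraint is that the $e_0$-coefficient vanishes modulo $p^r$, not that it equals $-u$.
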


\begin{proof}
Note that
$$
{\mathrm{K}}_0(A;\mathbb{Z}_m)=
\begin{cases}
G_A, & \mbox{if } m=0 \\
 G_A/q^sG_A\oplus T[p^r], & \mbox{if } m=p^r \\
 G_A/q^sG_A, & \mbox{if } m=q^s, q\neq p
\end{cases}.
$$
and
$$
{\mathrm{K}}_1(A;\mathbb{Z}_m)=
\begin{cases}
\mathbb{Z}(p^{\infty}), & \mbox{if } m=0 \\
 0, & \mbox{if } m\geq 1
\end{cases}.
$$

Suppose that there exists a $\Lambda$-isomorphism  $\Gamma$.
 By Lemma~\ref{lem:rigid-dimension-groups},
$$
 \Gamma_0^0=\id_{G_A}.
 $$
The restriction of $\Gamma$ to
$
 \K_1(A)
$
is an automorphism.  For each $r\geq1$, write
$$
 \Gamma_0^1(t_r)=u_rt_r,
 \qquad
 u_r\in\{1,2,\cdots,p^r-1\}.
$$
Since $pt_{r+1}=t_r$, then we must have
$$u_{r}\equiv u_{r+1} \pmod{p^{r}}.
$$
In particular, $p\nmid u_r$.

Since $\Gamma$ is compatible with  $(\rho_A)_{p^r}^0$ and $(\beta_A)_{p^r}^0$, the following diagram commutes.
\begin{displaymath}
\xymatrixcolsep{4pc}
\xymatrix{
 \mathrm{K}_0(A)  \ar[r]^-{(\rho_A)_{p^r}^0}\ar[d]^-{\rm id}&
 \mathrm{K}_0(A;\mathbb{Z}_{p^r})  \ar[d]^-{\Gamma_{p^r}^0}\ar[r]^-{(\beta_A)_{p^r}^0} & \mathrm{K}_1(A)  \ar[d]^-{\Gamma_0^1} \\
 \mathrm{K}_0(A)  \ar[r]^-{(\rho_A)_{p^r}^0}&
 \mathrm{K}_0(A;\mathbb{Z}_{p^r})  \ar[r]^-{(\beta_A)_{p^r}^0}& \mathrm{K}_1(A).
}
\end{displaymath}
Then for any $(x,t)\in G_A/p^rG_A\oplus T[p^r]$, we have
$$
\Gamma_{p^r}^0(x,t)=
\begin{cases}
  (x,0) & \mbox{if}\,  t=0 \\
  (x+h_r(t),u_rt) & \mbox{if} \, t\neq 0,
\end{cases}
$$
where $h_r$ is a homomorphism
$$
 h_r:T[p^r]\longrightarrow G_A/p^rG_A.
$$

Suppose that $h_r$ has the following form
$$
 h_r(t_r)
 =
 \sum_{j=0}^{\infty}c_{r,j}e_j+p^rG_A,
$$
where only finitely many coefficients $c_{r,j}$ are nonzero modulo
$p^r$.

Since  $\Phi_1\circ\Gamma=\Phi_0$, then the following diagram commutes.
\begin{displaymath}
\xymatrixcolsep{4pc}
\xymatrix{
 \mathrm{K}_0(A;\mathbb{Z}_{p^r})  \ar[d]^-{\Gamma_{p^r}^0}\ar[r]^-{(\Phi_0)_{p^r}^0} & \mathrm{K}_0(B;\mathbb{Z}_{p^r}) . \\
 \mathrm{K}_0(A;\mathbb{Z}_{p^r})  \ar[ru]_-{(\Phi_1)_{p^r}^0}&
}
\end{displaymath}
Recall that (see \ref{Alpha and Phi})
$$
(\Phi_0)_{p^r}^0((x,t))=\overline{\alpha}_r(x), \,\,(\Phi_1)_{p^r}^0((x,t))=\overline{\alpha}_r(x)+d_r(t)
$$
Then for $(0,t_r)\in G_A/p^rG_A\oplus T[p^r]$, we have
$$
   (\Phi_1)_{p^r}^0\circ \Gamma_{p^r}^0(0,t_r) = (\Phi_0)_{p^r}^0 (0,t_r),
$$
which is
$$
 \overline{\alpha}_r(h_r(t_r))+d_r(u_rt_r) = 0,
$$
i.e.,
$$
\sum_{j=0}^{\infty}p^jc_{r,j}e_j+\sum_{j=1}^{r-1}u_rp^jf_j+p^rG_B=0.
$$
Now we have
$$
 p^jc_{r,j}+u_rp^j
 \equiv0\pmod{p^r},\quad j=1,2,\cdots, r-1.
$$
Thus
$$
 c_{r,j}
 \equiv
 -u_r
 \pmod{p^{r-j}}.
$$
In particular,
$$
 c_{r,j}\not\equiv0\pmod p,
 \quad j=1,2,\cdots, r-1.
$$

Recall that
\begin{eqnarray*}
  (\kappa_A)^0_{p,p^{r}}\,:\,\K_0(A;\mathbb Z_{p^{r}}) &
 \rightarrow &
 \K_0(A;\mathbb Z_{p}). \\
(a+p^{r}G_A,t_{r}) &\mapsto & (a+pG_A,t_1)
\end{eqnarray*}
Since $\Gamma$ preserves the operation $\kappa$, the following diagram commutes.
\begin{displaymath}
\xymatrixcolsep{4pc}
\xymatrix{
 \mathrm{K}_0(A;\mathbb{Z}_{p^r})  \ar[d]^-{\Gamma_{p^r}^0}\ar[r]^-{(\kappa_A)^0_{p,p^{r}}} &  \mathrm{K}_0(A;\mathbb{Z}_{p})  \ar[d]^-{\Gamma_{p}^0} \\
 \mathrm{K}_0(A;\mathbb{Z}_{p^r})  \ar[r]^-{(\kappa_A)^0_{p,p^{r}}}&  \mathrm{K}_0(A;\mathbb{Z}_{p}) .
}
\end{displaymath}
Then for $(0,t_r)\in G_A/p^rG_A\oplus T[p^r]$, we have
$$
(\kappa_A)_{p,p^r}^0\circ\Gamma_{p^r}^0(0,t_r) = \Gamma_{p}^0\circ (\kappa_A)_{p,p^r}^0(0,t_r)
$$
$$
 (\kappa_A)_{p,p^r}^0(h_r(t_r),u_rt_r) =  \Gamma_{p}^0(0,t_1),
$$
$$
 (\kappa_A)_{p,p^r}^0(\sum_{j=0}^{\infty}c_{r,j}e_j+p^rG_A,u_rt_r) =  \Gamma_{p}^0(0,t_1),
$$
$$
  (\sum_{j=0}^{\infty}c_{r,j}e_j+pG_A,u_rt_1) = (\sum_{j=0}^{\infty}c_{1,j}e_j+pG_A,u_1t_1).
$$
Now we have
$$
 c_{r,j}
 \equiv
 c_{1,j}
 \pmod{p}.
$$
Since
$$
 c_{r,j}\not\equiv 0\pmod p,
 \quad j=1,2,\cdots, r-1.
$$
Then for any $j\geq 1$, let $r=2j$, then
$$
c_{1,j}\equiv c_{2j,j}
 \not\equiv 0
 \pmod{p}.
$$
This contradicts to the fact that $\sum_{j=0}^{\infty}c_{1,j}e_j$ contains only finite-support vectors.

Hence, no such $\Gamma$ exists.
\end{proof}

\section{On the necessity of $\kappa$}

\subsection{Continuous field ${C}^*$-algebras}

\begin{definition}\label{construction F}\rm
We write $\alpha X$ for the one point (Alexandroff) compactification of a locally compact space $X$,
and denote the point at infinity by $\infty_X$. 
Suppose that $\{\phi_m\}: A \to B$ is a family of *-homomorphisms, we denote by
$$\mathcal{F}[(\phi_m)]\quad {\rm or}\quad \mathcal{F}[\phi_1, \phi_2, \phi_3, \phi_4,\cdots]$$
the C*-algebra
$$\{(a,(b_m))\in A\oplus \prod_{m=1}^\infty B:\, \|b_m-\phi_m(a)\|\to 0\}.
$$

Note that $\mathcal{F}[(\phi_m)]$ is an inductive limit of $C^*$-algebras of the form $A\oplus \bigoplus_1^{m}B$ using bonding maps of the form
$$
\chi_m(a,b_1,b_2,\cdots,b_m)=(a,b_1,b_2,\cdots,b_m,\phi_{m+1}(a)).
$$
\end{definition}
The following contains some result of Dadarlat and Eilers.
\begin{lemma}[\cite{DadarlatEilers1998}]\label{lem:continuous-field}
Let $A$ and $B$ be separable $C^*$-algebras.
\begin{enumerate}[label=\textup{(\roman*)}]
\item There is a split exact sequence
$$
 0\longrightarrow
 \bigoplus_{m=1}^{\infty}B
 \longrightarrow
\mathcal{F}[(\psi_m)]
 \longrightarrow
 A
 \longrightarrow0.
$$

\item The algebra $\mathcal{F}[(\psi_m)]$ is the inductive limit of
$$
 A\oplus\bigoplus_{m=1}^{k}B,
 \qquad k\geq1,
$$
with connecting maps
$$
 (a,b_1,\ldots,b_k)
 \longmapsto
 (a,b_1,\ldots,b_k,\psi_{k+1}(a)).
$$

\item If $A$ and $B$ are  AD algebras of real rank zero, then
$[\mathcal{F}(\psi_m)]$ is an  AD algebra of real rank zero.

\item If $A$ and $B$ are simple, then
$$
 \Prim(\mathcal{F}[(\psi_m)])\cong\alpha\mathbb N.
$$
\end{enumerate}
\end{lemma}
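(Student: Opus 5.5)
The plan is to prove the four items in order, since (ii) will give (iii), and the extension from (i) will organize the computation of $\Prim$ in (iv). Throughout, write $\mathcal F=\mathcal F[(\psi_m)]$.

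For (i), consider the map $\pi:\mathcal F\to A$, $(a,(b_m))\mapsto a$. It is a $*$-homomorphism. Its kernel is $\{(0,(b_m)):\|b_m\|\to0\}$, which is exactly the $c_0$-direct sum $\bigoplus_{m=1}^\infty B$. The map $s(a)=(a,(\psi_m(a))_m)$ lands in $\mathcal F$ and is a $*$-homomorphism with $\pi\circ s=\id_A$. Hence $\pi$ is surjective and the sequence splits.

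For (ii), define $\iota_k:A\oplus\bigoplus_{m=1}^kB\to\mathcal F$ by
$$\iota_k(a,b_1,\ldots,b_k)=\bigl(a,b_1,\ldots,b_k,\psi_{k+1}(a),\psi_{k+2}(a),\ldots\bigr).$$
Each $\iota_k$ is an injective $*$-homomorphism, and $\iota_{k+1}\circ\chi_k=\iota_k$, where $\chi_k$ is the connecting map. The union of the images is dense: given $(a,(b_m))\in\mathcal F$, the element $\iota_k(a,b_1,\ldots,b_k)$ differs from it by $\sup_{m>k}\|b_m-\psi_m(a)\|$, which tends to $0$. The universal property of inductive limits then identifies $\mathcal F$ with $\varinjlim(A\oplus\bigoplus_1^kB,\chi_k)$.

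For (iii), each finite stage $A\oplus\bigoplus_1^kB$ is a finite direct sum of AD algebras of real rank zero. It is therefore itself an AD algebra of real rank zero, since it is an inductive limit of the direct sums of the building blocks. Proposition~\ref{prop:inductive-limit-facts}(ii),(iii) then gives the conclusion for $\mathcal F$. If needed, I would replace the approximating building-block systems by a single diagonal sequence, so that $\mathcal F$ is literally a limit of building blocks.

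For (iv), I would assume, as in the application, that each $\psi_m$ is nonzero (indeed unital). Since $A$ is simple, each $\psi_m$ is then injective and hence isometric. By (i), $\Prim(\mathcal F)$ is, as a set, $\Prim(\bigoplus B)\sqcup\Prim(A)$. Because $A$ and $B$ are simple, this is $\{P_m\}_{m\ge1}\cup\{P_\infty\}$, where $P_m=\{b_m=0\}$ and $P_\infty=\ker\pi$. The ideal $\bigoplus B$ is open in $\Prim(\mathcal F)$ and carries the discrete topology, so each $P_m$ is an isolated point.

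The main step is the topology at $P_\infty$. It suffices to show two things: the closed sets not containing $P_\infty$ are exactly the finite subsets of $\mathbb N$, and every set containing $P_\infty$ is closed.
\begin{itemize}
\item Let $S\subset\mathbb N$ be infinite and take $(a,(b_m))\in\bigcap_{m\in S}P_m$. Then $b_m=0$ for $m\in S$, so $\|\psi_m(a)\|\to0$ along $S$. Since $\|\psi_m(a)\|=\|a\|$, this forces $a=0$, so the intersection lies in $P_\infty$. Hence the closure of $S$ contains $P_\infty$.
\item Finite sets $S\subset\mathbb N$ are closed, since they are the hull of the ideal $\{b_m=0,\ m\in S\}$.
\item For any $S\ni\infty$, the set $S$ is the hull of the ideal $\{a=0,\ b_m=0\ \text{for } m\in S\setminus\{\infty\}\}$, so it is closed.
\end{itemize}
Consequently the open neighbourhoods of $P_\infty$ are exactly the cofinite sets containing it, which is the topology of $\alpha\mathbb N$. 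The only delicate point is the norm-preservation argument in the first item.
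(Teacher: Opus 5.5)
Your proof is correct. The paper gives no proof of this lemma; it attributes all four items to Dadarlat--Eilers. So your write-up is a self-contained verification rather than a variant of an argument in the text.

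Items (i) and (ii) are the expected routine checks: the splitting $a\mapsto (a,(\psi_m(a)))$, and density of $\bigcup_k \iota_k\bigl(A\oplus\bigoplus_{1}^{k}B\bigr)$ via $\sup_{m>k}\|b_m-\psi_m(a)\|\to 0$. For (iii), all the real content sits in the general fact, recorded as Proposition~\ref{prop:inductive-limit-facts}(iii), that a sequential limit of AD algebras is again AD. That fact is a diagonal argument resting on the semiprojectivity of the building blocks. Your remark about passing to a single diagonal sequence is exactly where that input enters, and relying on the paper's proposition there is legitimate.

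Your handling of (iv) is the most valuable part. You were right to add a hypothesis on the $\psi_m$: as stated in the paper, (iv) is false. If every $\psi_m=0$, then $\mathcal{F}[(\psi_m)]=A\oplus\bigoplus_m B$, the point $P_\infty$ is isolated, and the primitive ideal space is an infinite discrete space rather than $\alpha\mathbb{N}$.

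Your isometry argument actually identifies the sharp condition. It suffices that all but finitely many $\psi_m$ are nonzero, since for simple $A$ a nonzero $\psi_m$ is isometric. The condition is also necessary. If the set $S_0$ of indices with $\psi_m=0$ is infinite, your own hull computation shows $S_0$ is an infinite closed set of isolated points missing $P_\infty$, which cannot happen in $\alpha\mathbb{N}$. In the paper's application the maps $\varphi_i$ are unital, so the condition holds and nothing downstream is affected.

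Your hull computations are correct and pin down the topology as that of $\alpha\mathbb{N}$:
\begin{itemize}
\item finite subsets of $\mathbb{N}$ are closed;
\item every set containing $P_\infty$ is closed;
\item infinite subsets of $\mathbb{N}$ accumulate at $P_\infty$.
\end{itemize}
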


Recall that $\Phi_0,\Phi_1$ are $\Lambda$-homomorphisms (Proposition \ref{prop:Phi-Lambda}). By Theorem~\ref{thm:DG-existence}, there are unital
$*$-homomorphisms
$$
 \varphi_0,\varphi_1:A\longrightarrow B
$$
such that
$$
 \underline{\mathrm{K}}(\varphi_i)=\Phi_i,
 \qquad i=0,1.
$$
\begin{definition}\rm
By Theorem~\ref{thm:DG-existence}, there are unital
$*$-homomorphisms
$$
 \varphi_0,\varphi_1:A\longrightarrow B
$$
such that
$$
 \underline{\mathrm{K}}(\varphi_i)=\Phi_i,
 \qquad i=0,1.
$$
Define the $C^*$-algebras
$$
 E_0
 =
 \mathcal{F}[\varphi_0,\varphi_0,\varphi_0,\ldots]
\quad{\rm and}\quad
 E_1
 =
 \mathcal{F}[\varphi_1,\varphi_1,\varphi_1,\ldots].
$$
\end{definition}
By Lemma~\ref{lem:continuous-field}, both $E_0$ and $E_1$ are unital
 AD algebras of real rank zero. They fit into split extensions
$$
 0\longrightarrow
 \bigoplus_{m=1}^{\infty}B
 \longrightarrow
 E_i
 \longrightarrow
 A
 \longrightarrow0.
$$

For $k\geq1$, set
$$
 E_i^{(k)}
 =
 A\oplus\bigoplus_{m=1}^{k}B.
$$
The connecting map
$$
 \chi_k^{(i)}:
 E_i^{(k)}
 \longrightarrow
 E_i^{(k+1)}
$$
is
$$
 \chi_k^{(i)}(a,b_1,\ldots,b_k)
 =
 (a,b_1,\ldots,b_k,\varphi_i(a)).
$$
Then
$$
 E_i
 \cong
 \varinjlim
 \left(
 E_i^{(k)},\chi_k^{(i)}
 \right).
$$
\begin{definition}\rm
Let us define a  family of group homomorphisms
$$
(\Theta^{(k)})^{*}_n:
{\rm K}_*(E_0^{(k)};\mathbb Z_n)\longrightarrow{\rm K}_*(E_1^{(k)};\mathbb Z_n).
$$
 such that
$$
(\Theta^{(k)})^{*}_n=\begin{cases}
                 \id_{\K_*(A)}\oplus \bigoplus_{m=1}^k \id_{\K_*(B)}, & \mbox{if } n=0 \\
               0, & \mbox{if } n=1 \\
                S_n\oplus
\bigoplus_{m=1}^k  \id_{\K_*(B;\mathbb Z_n)} & \mbox{if } n\geq 2
             \end{cases}.
             $$
Passing to inductive limits, we have a family of isomorphisms
$$
(\Theta)^{*}_n:
 \K_*(E_0;\mathbb Z_n)
 \longrightarrow
 \K_*(E_1;\mathbb Z_n).
$$

Let
$$
\Theta^{(k)}:=\bigoplus_{n=0}^{\infty}(\Theta^{(k)})^{*}_n\quad
{\rm and}\quad
\Theta:=\bigoplus_{n=0}^{\infty}(\Theta)^{*}_n,$$ then we have graded homomorphsims
$$
 \Theta^{(k)}:
 \underline{\rm K}(E_0^{(k)})\longrightarrow\underline{\rm K}(E_1^{(k)})
\quad{\rm and }\quad
 \Theta:
 \underline{\mathrm{K}}(E_0)
 \longrightarrow
 \underline{\mathrm{K}}(E_1).
$$

\end{definition}
\begin{lemma}\label{lem:stage-commutativity}
For every $k\geq1$, we have
$$
(\Theta^{(k+1)})^{*}_n\circ
 \K_*(\chi_k^{(0)};\mathbb Z_n)
 =
 \K_*(\chi_k^{(1)};\mathbb Z_n)
 \circ(\Theta^{(k)})^{*}_n.
$$
\end{lemma}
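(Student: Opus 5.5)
The identity is checked separately in each coefficient degree $n$, using additivity of $\K_*(-;\mathbb Z_n)$ over finite direct sums. By additivity,
\[
\K_*(E_i^{(k)};\mathbb Z_n)\cong \K_*(A;\mathbb Z_n)\oplus\bigoplus_{m=1}^k\K_*(B;\mathbb Z_n).
\]
Under this identification $\chi_k^{(i)}$ is the direct sum of the inclusion of the first $k+1$ summands with the extra map $\K_*(\varphi_i;\mathbb Z_n)=(\Phi_i)_n^*$. Explicitly, in matrix form,
\[
\K_*(\chi_k^{(i)};\mathbb Z_n)(x,y_1,\ldots,y_k)=\bigl(x,y_1,\ldots,y_k,(\Phi_i)_n^*(x)\bigr),
\]
because $\chi_k^{(i)}=(\id_A,\ldots)\oplus\varphi_i\circ\pi_A$ on the new coordinate and $\underline{\K}(\varphi_i)=\Phi_i$ by construction.

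The degree $n=1$ is trivial, since both sides are $0$. For $n=0$, $\Theta$ is the identity at every stage. The identity then reduces to $(\Phi_0)_0^*=(\Phi_1)_0^*$, which holds because both equal $\alpha\oplus 0$ by the definition in Subsection~\ref{Alpha and Phi}.

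For $n\ge2$, I will apply both sides to $(x,y_1,\ldots,y_k)$ and compare coordinates. The left side gives
\[
\bigl(S_n x,\,y_1,\ldots,y_k,\,(\Phi_0)_n^*(x)\bigr).
\]
Here the last coordinate is $\id$ applied to $(\Phi_0)_n^*(x)$, since $\Theta^{(k+1)}$ acts as the identity on the $B$-summands. The right side gives
\[
\bigl(S_n x,\,y_1,\ldots,y_k,\,(\Phi_1)_n^*(S_n x)\bigr).
\]
So everything reduces to the relation $(\Phi_1)_n^*\circ S_n=(\Phi_0)_n^*$.

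On the degree-$0$ part this relation is Lemma~\ref{lem:local-shear} when $n=p^r$, and it is stated for general $n$ in \ref{S and kappa}. On the degree-$1$ part both $\Phi_i$ vanish, since $\K_1(B;\mathbb Z_n)=0$, so the relation holds whatever $S_n$ is; I take $S_n$ to be the identity there.

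I expect the main obstacle to be the bookkeeping for $n$ not a prime power. When $p^r\,\|\,n$, $S_n$ is defined as $U_r$, and this has to be read through the primary decomposition $\K_0(A;\mathbb Z_n)\cong\bigoplus_{q}\K_0(A;\mathbb Z_{q^{s_q}})$ from Proposition~\ref{Prime K}. Concretely, $S_n$ acts by $U_r$ on the $p$-primary summand and by the identity on the others. Likewise $(\Phi_i)_n^0$ is defined summandwise, and on the summands with $q\neq p$ the two maps already agree, both being $\overline{\alpha}_{q^s}$.

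With this convention the relation holds summand by summand, and the lemma follows. No compatibility with $\kappa$ is needed here, since the statement is made one coefficient $n$ at a time.
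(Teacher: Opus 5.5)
Your proposal is correct and takes the same route as the paper. Both reduce the identity, coordinate by coordinate, to the relation $(\Phi_1)_n\circ S_n=(\Phi_0)_n$ on the newly added $B$-summand, and handle $n=0$ by noting that $\Phi_0$ and $\Phi_1$ agree on ordinary K-theory. Your extra bookkeeping (the case $n=1$, the degree-one part where $\mathrm{K}_1(B;\mathbb{Z}_n)=0$, and reading $S_n$ through the primary decomposition for composite $n$) makes explicit what the paper's terse proof leaves implicit.
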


\begin{proof}
The maps agree on the first $k$ copies of the $B$-summand.  On the
new $B$-summand, the desired equality is precisely
$$
 (\Phi_1)_n^0\circ S_n=(\Phi_0)_n^0.
$$
If $n=0$, then $\Phi_0$ and $\Phi_1$ agree, and all stage maps
are the identity on their existing summands.
\end{proof}

\begin{proposition}\label{prop:reduced-invariant-iso}
The map $\Theta$ is an ordered scaled
$\underline{\mathrm{K}}_{\langle\kappa\rangle}$-isomorphism:
$$
 \bigl(
 \underline{\mathrm{K}}(E_0),\underline{\mathrm{K}}(E_0)_+,[1_{E_0}]
 \bigr)_{ \underline{\mathrm{K}}_{\langle\kappa\rangle}}
 \cong
 \bigl(
 \underline{\mathrm{K}}(E_1),\underline{\mathrm{K}}(E_1)_+,[1_{E_1}]
 \bigr)_{ \underline{\mathrm{K}}_{\langle\kappa\rangle}}.
$$
\end{proposition}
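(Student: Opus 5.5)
The plan is to check the required properties of $\Theta$ stage by stage on the inductive systems $(E_i^{(k)},\chi_k^{(i)})$ and then pass to the limit using continuity of total K-theory (Proposition~\ref{prop:inductive-limit-facts}(i)). The approach has four steps.

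\textbf{Step 1: $\Theta$ is a well-defined isomorphism.} By Lemma~\ref{lem:stage-commutativity}, the maps $\Theta^{(k)}$ form a morphism of inductive systems of graded groups. Hence they induce a graded homomorphism $\Theta$ on the limits. Each $\Theta^{(k)}$ is bijective, because each $S_n$ is an automorphism by Lemma~\ref{lem:local-shear}. Its inverse is $\Theta^{(k)}$ with $S_n^{-1}$ in place of $S_n$, and these inverses are again compatible with the connecting maps. So $\Theta$ is a graded group isomorphism.

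\textbf{Step 2: $\Theta$ preserves $\rho$ and $\beta$.} On $\underline{\mathrm K}(E_i^{(k)})=\underline{\mathrm K}(A)\oplus\bigoplus_{m=1}^k\underline{\mathrm K}(B)$, the operations $\rho_n^j$ and $\beta_n^j$ act summandwise. On the $B$-summands, $\Theta^{(k)}$ is the identity, so the compatibility there is trivial. On the $A$-summand, I would check in cases:
\begin{itemize}
\item for $n=p^r$, $S_n=U_r$, and Lemma~\ref{lem:local-shear} gives $U_r\circ\rho=\rho$ and $\beta\circ U_r=\beta$ in degree $0$;
\item degree $1$ is trivial, since $\K_1(A;\mathbb Z_n)=0$;
\item for the remaining $n$, $S_n=\mathrm{id}$.
\end{itemize}
For general composite $n$ I would invoke Proposition~\ref{Prime K} together with its following remark, and reduce to prime-power coefficients. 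Since the identities hold at every finite stage, they pass to the limit.

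\textbf{Step 3: $\Theta$ preserves order and unit.} The unit is easy: $[1_{E_i^{(k)}}]=([1_A],[1_B],\dots,[1_B])$ lies in $\K_0$, where $\Theta^{(k)}$ is the identity.

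For the positive cone I would argue at finite stages. $E_i^{(k)}$ is a finite direct sum of simple algebras, so its ideals are sums of summands. An element of $\underline{\mathrm K}(E_i^{(k)})_+$ therefore has nonzero $\K_0$-component exactly on the summands where the total class lives, with arbitrary other components there. Since $\Theta^{(k)}$ is the identity on $\K_0$ and preserves each summand, it maps $\underline{\mathrm K}(E_0^{(k)})_+$ onto $\underline{\mathrm K}(E_1^{(k)})_+$.

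To pass this to the limit, I would use that the Dadarlat--Gong positive cone of an inductive limit is the union of the images of the stage cones. This holds because every projection and every ideal generated by one comes from a finite stage. Equivalently, one can use the explicit description of $\Prim(E_i)\cong\alpha\mathbb N$ from Lemma~\ref{lem:continuous-field}.

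\textbf{Main obstacle.} I expect the order part of Step 3 to be the hardest, specifically justifying $\underline{\mathrm K}(E)_+=\bigcup_k\operatorname{im}\underline{\mathrm K}(E^{(k)})_+$. The concern is that an ideal $I_e$ in $E_i$ containing the $A$-part is not a finite-stage object: it contains infinitely many $B$-summands. I would handle this by noting that positivity of $x$ means $x$ lies in the image of $\underline{\mathrm K}(I_e)$, and $I_e$ is itself an inductive limit of the ideals $I_e\cap E_i^{(k)}$. By continuity, such an $x$ is then the image of a positive element at some finite stage, where Step 3 applies.

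Finally, I would note explicitly that $\kappa$ is not claimed to be preserved. Notion~\ref{S and kappa} shows $S_{p^{r+1}}\kappa\neq\kappa S_{p^r}$, which is consistent with the statement only asserting a $\underline{\mathrm K}_{\langle\kappa\rangle}$-isomorphism.
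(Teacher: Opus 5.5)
Your proposal is correct and follows essentially the same route as the paper. Both work at each finite stage $E_i^{(k)}=A\oplus B^{k}$: $S_n$ preserves $\rho$ and $\beta$, and $\Theta^{(k)}$, being the identity on $\K_0$ and summand-preserving, preserves the Dadarlat--Gong cone and the unit; the properties then pass to the limit. You supply more detail than the paper, which only asserts ``these properties pass to the inductive limit'': bijectivity via $S_n^{-1}$, and the fact that the limit cone is the union of the images of the stage cones. One small correction: for composite $n$, appeal directly to the natural primary decomposition of $\K_*(-;\mathbb Z_n)$, along which $\rho_n$ and $\beta_n$ split, rather than to Proposition~\ref{Prime K}, which concerns $\Lambda$-maps and so does not apply to $\Theta$.
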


\begin{proof}
Each $S_n$ preserves $\rho_n^0$ and $\beta_n^0$, and the identity maps on
the $B$-summands clearly do so.  Hence $\Theta$ preserves the operations  $\rho,\beta$ retained in $\underline{\mathrm{K}}_{\langle\kappa\rangle}$.

At every finite stage, $(\Theta^{(k)})_{n}^{*}$ fixes the ordinary $\K_0$-group.
The stage algebras are finite direct sums of the simple algebras $A$
and $B$.  Therefore the ideal generated by a projection is determined
by the direct summands on which its ordinary $\K_0$-class is nonzero.
Since $(\Theta^{(k)})_{n}^{*}$  fixes the ordinary $\K_0$-component and each direct
summand, it preserves the Dadarlat--Gong positive cone and the ideal
support of every element.

These properties pass to the inductive limit.  The unit is fixed on
ordinary $\K_0$, so $\Theta$ also preserves the scale.
\end{proof}

The isomorphism $\Theta$ is not a $\Lambda$-isomorphism, since its
$p^r$-components contain the shears $S_r$, and \ref{S and kappa} shows that these do not commute with the
coefficient inclusions.

\subsection{ $E_0$ and $E_1$ are not isomorphic}
\label{sec:nonisomorphism}

\begin{lemma}\label{lem:fiberwise-form}
Suppose that
$$
 \Xi:E_0\longrightarrow E_1
$$
is a $*$-isomorphism.  After composing $\Xi$ with a coordinate
permutation automorphism of $E_1$, there exist automorphisms
$$
 \xi\in\Aut(A),
 \qquad
 \xi_m\in\Aut(B),
$$
such that
$$
 \Xi(a,(b_m))
 =
 \bigl(\xi(a),(\xi_m(b_m))\bigr)
$$
and
\begin{equation}\label{eq:asymptotic-fiber-relation}
 \|
 \xi_m\varphi_0(a)-\varphi_1\xi(a)
 \|
 \longrightarrow0
\end{equation}
for every $a\in A$.
\end{lemma}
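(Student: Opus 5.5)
The plan is to exploit the ideal structure of $E_i$, as described in Lemma~\ref{lem:continuous-field}. Since $A$ and $B$ are simple, $\Prim(E_i)\cong\alpha\mathbb N$. The point $m\in\mathbb N$ corresponds to the primitive ideal $P_m=\ker(\pi_m)$, where $\pi_m(a,(b_k))=b_m$. The point $\infty$ corresponds to $P_\infty=\bigoplus_{m}B$, the kernel of $(a,(b_k))\mapsto a$.

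\emph{Step 1: the homeomorphism.} The isomorphism $\Xi$ induces a homeomorphism $\hat\Xi$ of $\alpha\mathbb N$. The point $\infty$ is the unique non-isolated point, so $\hat\Xi$ fixes $\infty$ and restricts to a bijection $\sigma$ of $\mathbb N$. We compose $\Xi$ with the coordinate permutation of $E_1$ induced by $\sigma^{-1}$. This permutation is an automorphism because all fibres of $E_1$ use the same map $\varphi_1$, so the defining condition $\|b_m-\varphi_1(a)\|\to0$ is permutation invariant. After this composition we may assume $\Xi(P_m^{(0)})=P_m^{(1)}$ for every $m$ and $\Xi(P_\infty^{(0)})=P_\infty^{(1)}$.

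\emph{Step 2: the fibre automorphisms.} Since $\Xi$ maps $P^{(0)}_\infty$ onto $P^{(1)}_\infty$, it induces an isomorphism $\xi$ of the quotient $E_0/P^{(0)}_\infty\cong A$ onto $E_1/P^{(1)}_\infty\cong A$. In the same way, $\Xi$ induces $\xi_m\in\Aut(B)$ on $E_i/P_m\cong B$. Hence, for every $x=(a,(b_k))\in E_0$, the coordinates of $\Xi(x)$ are exactly $\xi(a)$ and $\xi_m(b_m)$. This gives the formula $\Xi(a,(b_m))=(\xi(a),(\xi_m(b_m)))$.

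\emph{Step 3: the asymptotic relation.} Fix $a\in A$. The split exact sequence of Lemma~\ref{lem:continuous-field} gives the element $x=(a,(\varphi_0(a))_m)\in E_0$. Its image $\Xi(x)=(\xi(a),(\xi_m\varphi_0(a))_m)$ lies in $E_1$. By the definition of $\mathcal F[\varphi_1,\varphi_1,\ldots]$, this membership means
\[
\|\xi_m\varphi_0(a)-\varphi_1\xi(a)\|\to0,
\]
which is \eqref{eq:asymptotic-fiber-relation}.

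The main obstacle is Step 1. There one must check carefully that the $m$-th primitive ideal is $\ker\pi_m$ and that $\infty$ corresponds to $\bigoplus_m B$. This uses simplicity of $B$ and $A$, so that each quotient is simple and the listed ideals exhaust $\Prim$. One must also check that a homeomorphism of $\alpha\mathbb N$ necessarily fixes $\infty$. Once these identifications are in place, Steps 2 and 3 are formal.
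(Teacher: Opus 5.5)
Your proposal is correct and follows the paper's proof essentially step for step. You fix $\infty$ as the unique non-isolated point of $\Prim(E_i)\cong\alpha\mathbb N$, undo the induced permutation of the isolated points by a coordinate permutation (legitimate because the gluing map is constant), read off $\xi$ and $\xi_m$ from the maps induced on the quotients, and apply $\Xi$ to $(a,(\varphi_0(a))_m)$. Your explicit identification of the primitive ideals fills in detail that the paper leaves to Lemma~\ref{lem:continuous-field}(iv); note that it implicitly uses injectivity of $\varphi_0,\varphi_1$, which holds because they are unital maps on the simple algebra $A$.
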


\begin{proof}
The isomorphism $\Xi$ induces a homeomorphism of
$$
 \Prim(E_i)\cong\alpha\mathbb N.
$$
The point at infinity is the unique non-isolated point, so it must be
fixed.  The isolated points are permuted.  Since both fields use a
constant gluing map, every permutation $\sigma$ of the isolated points
is implemented by the coordinate-permutation automorphism
$$
 (a,(b_m))
 \longmapsto
 (a,(b_{\sigma^{-1}(m)})).
$$
After composing with such an automorphism, we may assume that $\Xi$
fixes every primitive ideal.

It follows that $\Xi$ induces an automorphism $\xi$ on the quotient
fiber $A$ and automorphisms $\xi_m$ on the isolated fibers $B$.  Applying
$\Xi$ to
$$
 (a,(\varphi_0(a))_{m=1}^{\infty})\in E_0
$$
and using the defining convergence condition for $E_1$ gives
\eqref{eq:asymptotic-fiber-relation}.
\end{proof}

\begin{lemma}\label{lem:B-automorphisms-trivial}
For every automorphism $\eta\in\Aut(B)$,
$$
 \underline{\mathrm{K}}(\eta)=\id_{\underline{\mathrm{K}}(B)}.
$$
\end{lemma}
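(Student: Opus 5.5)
Since $B$ is a simple unital AF algebra with $\K_1(B)=0$, the plan is to reduce the statement to the ordinary $\K_0$-group and then invoke the rigidity in Lemma~\ref{lem:rigid-dimension-groups}.

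\emph{Step 1 (the $\K_0$-component).} Since $\eta$ is a $*$-automorphism, it maps projections to projections and preserves the unit. Hence $\K_0(\eta)$ is an automorphism of the scaled ordered group
$$
\bigl(\K_0(B),\K_0(B)_+,[1_B]\bigr)\cong(G_B,G_B^+,f_0).
$$
By Lemma~\ref{lem:rigid-dimension-groups}, $\Aut(G_B,G_B^+,f_0)=\{\id\}$. Therefore $\K_0(\eta)=\id$. Since $\K_1(B)=0$, the map $\K_1(\eta)$ is also the identity.

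\emph{Step 2 (finite coefficients).} For each $n\ge2$, the coefficient sequence of Section~2 gives
$$
0\to\K_i(B)/n\K_i(B)\xrightarrow{\rho_n^i}\K_i(B;\mathbb Z_n)\xrightarrow{\beta_n^i}\K_{i+1}(B)[n]\to0.
$$
Because $\K_1(B)=0$ and $G_B$ is torsion free, we get $\K_1(B;\mathbb Z_n)=0$. In degree zero, $\rho_n^0$ is an isomorphism $G_B/nG_B\cong\K_0(B;\mathbb Z_n)$.

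By naturality of $\rho$, we have $\K_0(\eta;\mathbb Z_n)\circ\rho_n^0=\rho_n^0\circ\K_0(\eta)=\rho_n^0$. Since $\rho_n^0$ is surjective, this forces $\K_0(\eta;\mathbb Z_n)=\id$. The $\K_1(-;\mathbb Z_n)$ components are maps on the zero group, so they are trivially the identity.

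Summing over all $n$ gives $\underline{\K}(\eta)=\id_{\underline{\K}(B)}$.

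The only substantive point is Step~1. It uses uniqueness of the normalized state on $G_B$ together with injectivity of $\tau_B$, and both are already established in Lemma~\ref{lem:rigid-dimension-groups}. Everything after that is the vanishing of torsion and the surjectivity of $\rho$, so I do not expect any real obstacle.
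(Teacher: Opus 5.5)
Your proposal is correct and follows essentially the same route as the paper. Rigidity of $(G_B,G_B^+,f_0)$ from Lemma~\ref{lem:rigid-dimension-groups} gives $\K_0(\eta)=\id$. The vanishing of $\K_1(B)$ and of the torsion in $G_B$ kills the remaining groups and makes $\rho_n^0$ surjective, and naturality of $\rho_n^0$ then forces $\K_0(\eta;\mathbb Z_n)=\id$ for every $n$.
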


\begin{proof}
By Lemma~\ref{lem:rigid-dimension-groups},
$
 \K_0(\eta)=\id_{G_B}.
$
Since $B$ is AF, we have
$$
 \K_1(B)=0,\quad
 \K_0(B;\mathbb Z_n)
 \cong
 G_B/nG_B,
 \quad
 \K_1(B;\mathbb Z_n)=0.
$$
Naturality of $\rho_n^0$ and the surjectivity of
$$
 \rho_n^0:G_B\longrightarrow G_B/nG_B
$$
therefore imply
$$
 \K_0(\eta;\mathbb Z_n)=\id
$$
for every $n$.  Hence $\eta$ induces the identity on full total
K-theory.
\end{proof}
We list the following stable relation result, which we will use later.
\begin{theorem}{\rm (}\cite[Theorem 3.6]{Loring1993}{\rm)}\label{sr}
Let $R$ be a set of homotopy liftable relations. There exists
$\varepsilon_0>0$ such that if
$$
  \varphi,\psi:\,C^*\langle G\mid R\rangle\longrightarrow B
$$
are two homomorphisms to any $C^*$-algebra $B$ with
$$
  \lVert\varphi(g_i)-\psi(g_i)\rVert\leq\varepsilon_0,
  \qquad i=1,\ldots,l,
$$
then $\varphi$ and $\psi$ are homotopic. If $R$ is strongly stable, and
$\eta\geq0$ is given, $\varepsilon_0\geq0$ may be chosen so that the
homotopy $\theta_t$ from $\varphi$ to $\psi$ can satisfy
$$
  \lVert\theta_t(g_i)-\varphi(g_i)\rVert\leq\eta.
$$
\end{theorem}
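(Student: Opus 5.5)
The plan is to argue by contradiction, using a sequence-algebra (asymptotic) formulation. The idea is to turn ``$\varphi$ and $\psi$ are close on generators'' into an exact representation of $R$ in a corona-type quotient, and then use homotopy liftability of $R$ to lift it back to honest homotopies at finite stages. The main difficulty, treated in the third paragraph, is making the lifted homotopy start and end exactly at $\varphi$ and $\psi$.

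\emph{Setting up the contradiction.} Suppose no $\varepsilon_0$ works. Then there are $C^*$-algebras $B_k$ and homomorphisms
$$
\varphi_k,\psi_k:C^*\langle G\mid R\rangle\longrightarrow B_k
$$
with $\|\varphi_k(g_i)-\psi_k(g_i)\|\le 1/k$, such that $\varphi_k$ is not homotopic to $\psi_k$ for any $k$. Put $I_k=C([0,1],B_k)$, $\mathcal P=\prod_k I_k$, $\mathcal S=\bigoplus_k I_k$ and $\mathcal Q=\mathcal P/\mathcal S$. Consider the linear interpolations
$$
x_i=\bigl((1-t)\varphi_k(g_i)+t\psi_k(g_i)\bigr)_k\in\mathcal P .
$$
Each relation in $R$ is a norm condition on a noncommutative polynomial in the generators. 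For each fixed $t$, $x_i^{(k)}(t)$ lies within $1/k$ of $\varphi_k(g_i)$, which satisfies $R$ exactly. So the defect of each relation tends to $0$ uniformly in $t$ as $k\to\infty$, and the images $\pi(x_i)\in\mathcal Q$ satisfy $R$ exactly. This gives a homomorphism $\Theta:C^*\langle G\mid R\rangle\to\mathcal Q$.

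\emph{Lifting.} Homotopy liftable means: representations into a quotient $D/J$ that are homotopic to ones admitting lifts admit lifts themselves, compatibly with the homotopy. I will apply this to the relative (endpoint-pinned) algebra
$$
D=\Bigl\{\,f\in\mathcal P:\ f(0)\in \textstyle\prod_k\varphi_k\text{-range},\ \dots\Bigr\},
$$
or, more cleanly, to the pullback of $\mathcal P\to\mathcal Q$ along evaluation at the endpoints. The point is that $\Theta$, evaluated at $t=0$ and $t=1$, already lifts exactly, namely to $(\varphi_k)_k$ and $(\psi_k)_k$. Moreover $\Theta$ is a homotopy whose restriction to each endpoint is lifted. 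Homotopy lifting then yields a homomorphism
$$
\widetilde\Theta:C^*\langle G\mid R\rangle\to\mathcal P
$$
lifting $\Theta$, with $\widetilde\Theta(g_i)(0)=(\varphi_k(g_i))_k$ and $\widetilde\Theta(g_i)(1)=(\psi_k(g_i))_k$. Its $k$-th coordinate, for large $k$, is an honest homotopy from $\varphi_k$ to $\psi_k$, which contradicts the choice of $\varphi_k,\psi_k$.

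\emph{The main obstacle} is this step: extracting from Loring's definition of homotopy liftable relations a lifting that is pinned at both endpoints rather than merely existing somewhere in $\mathcal P$. My first attempt is to apply the lifting property on each half-interval $[0,\tfrac12]$ and $[\tfrac12,1]$. On each half the homotopy contracts back to an endpoint that is already lifted exactly. I would then glue the two halves at the midpoint, using the fact that the two lifts agree modulo $\mathcal S$ there, and correct the discrepancy by a short straight-line path in finitely many coordinates.

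\emph{The strongly stable case.} The refinement with the bound $\eta$ uses the same argument, now keeping track of norms. In $\mathcal Q$ we have $\|\pi(x_i)(t)-\pi(x_i)(0)\|=0$, so the lift can be chosen with $\|\widetilde\Theta(g_i)(t)-\varphi_k(g_i)\|\le\eta$ for large $k$. If such a choice did not exist, strong stability (approximate representations are uniformly close to exact ones) would be violated on the sequence algebra.
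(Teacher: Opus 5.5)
The paper gives no proof of this statement (it is quoted from Loring), so I compare your proposal with the standard argument. There is a genuine gap, at exactly the step you flag as the main obstacle, and your suggested repair does not work. The quotient $\mathcal Q=\prod_k C([0,1],B_k)/\bigoplus_k C([0,1],B_k)$ remembers the endpoint values only modulo $\bigoplus_k B_k$. So no lift of a homomorphism out of $\mathcal Q$ is forced to have endpoints exactly $(\varphi_k)_k$ and $(\psi_k)_k$. A homotopy-lifting property of the kind you describe pins only the initial point of the lifted path. Its terminal point is merely \emph{some} lift of $\pi\circ(\psi_k)_k$, differing from $(\psi_k)_k$ by an element of $\bigoplus_k B_k$, which is the original problem again. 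The midpoint gluing fails for two reasons. First, $\bigoplus_k B_k$ is the $c_0$-sum, so two lifts that agree modulo it differ by small amounts in \emph{infinitely} many coordinates, not finitely many. Second, a straight-line path between two homomorphisms is not a path of homomorphisms. Joining two nearby homomorphisms by a homotopy is precisely what the theorem asserts, so the step is circular. Note also that a lift into all of $\mathcal P$ with exact endpoints would give homotopies in \emph{every} coordinate, which is more than any semiprojectivity-type hypothesis can deliver.

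The missing idea is to divide by an ideal that vanishes at the endpoints, and to ask only for a lift modulo a finite stage. Put $Z=C([0,1],\prod_k B_k)$, $I_n=C_0((0,1),\bigoplus_{k\le n}B_k)$ and $I=C_0((0,1),\bigoplus_k B_k)=\overline{\bigcup_n I_n}$. Then $Z/I$ is the pullback of $C([0,1],\prod_kB_k/\bigoplus_kB_k)$ and two copies of $\prod_k B_k$ over the endpoint evaluations. With $\Phi=(\varphi_k)_k$ and $\Psi=(\psi_k)_k$, the formula $\Theta(a)=(\text{constant path }\pi\Phi(a),\,\Phi(a),\,\Psi(a))$ defines a $*$-homomorphism $C^*\langle G\mid R\rangle\to Z/I$, because $\pi\Phi=\pi\Psi$. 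Your interpolations $x_i$ are exactly lifts of $\Theta(g_i)$ to $Z$. The lifting property that does the work is partial liftability of $R$, i.e.\ semiprojectivity of $C^*\langle G\mid R\rangle$, applied to the increasing ideals $I_n$. It lifts $\Theta$ to $Z/I_n$ for some $n$. For $k>n$ the $k$-th coordinate is well defined, since $I_n$ has zero $k$-th coordinate, and it gives a homomorphism $h_k$ into $C([0,1],B_k)$. Evaluation at $0$ and $1$ factors through $Z/I$, so the endpoints of $h_k$ are exactly $\varphi_k$ and $\psi_k$, which is the desired contradiction. Moreover, the lift agrees with $[x_i]$ modulo $I/I_n$, so $\sup_t\|h_k(g_i)(t)-\varphi_k(g_i)\|\to0$ as $k\to\infty$. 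This gives the $\eta$-refinement directly, replacing the unsupported appeal to strong stability in your last paragraph.
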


\begin{theorem}\label{thm:E-not-isomorphic}
The algebras $E_0$ and $E_1$ are not isomorphic.
\end{theorem}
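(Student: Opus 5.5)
We argue by contradiction: suppose $\Xi:E_0\to E_1$ is a $*$-isomorphism. We first put $\Xi$ in fiberwise form, then turn the asymptotic relation between the fibers into an exact identity in total K-theory, and finally contradict Theorem~\ref{thm:no-global}.

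\emph{Step 1 (fiberwise form).} By Lemma~\ref{lem:fiberwise-form}, after composing with a coordinate permutation we may assume
$$\Xi(a,(b_m))=(\xi(a),(\xi_m(b_m)))$$
with $\xi\in\Aut(A)$ and $\xi_m\in\Aut(B)$. Moreover, $\|\xi_m\varphi_0(a)-\varphi_1\xi(a)\|\to0$ for every $a\in A$.

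\emph{Step 2 (from approximate to exact equality in $\underline{\mathrm K}$).} We aim to show that
$$\underline{\mathrm K}(\xi_m)\circ\Phi_0=\Phi_1\circ\underline{\mathrm K}(\xi)$$
for $m$ large. By Lemma~\ref{lem:B-automorphisms-trivial}, $\underline{\mathrm K}(\xi_m)=\id$, so this identity reads $\Phi_1\circ\underline{\mathrm K}(\xi)=\Phi_0$.

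To get it, fix an element $x\in \K_*(A;\mathbb Z_n)$. It is represented by a homomorphism from a finitely presented, semiprojective (homotopy liftable) algebra, namely one of the standard building blocks of the form $C_0(W_n)\otimes$ (a matrix algebra over a dimension drop algebra or a circle algebra), into $A\otimes\mathcal K$. Both compositions with $\xi_m\varphi_0$ and $\varphi_1\xi$ are maps from this finitely generated algebra. They are close on the generators once $m$ is large, and Theorem~\ref{sr} then makes them homotopic, so
$$\underline{\mathrm K}(\xi_m\varphi_0)(x)=\underline{\mathrm K}(\varphi_1\xi)(x).$$
The threshold $m$ depends on $x$, but we only need equality element by element. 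Since $\underline{\mathrm K}(\xi_m)=\id$ for every $m$, the left side equals $\Phi_0(x)$ for all $m$. Hence $\Phi_0(x)=\Phi_1(\underline{\mathrm K}(\xi)(x))$ for every $x\in\underline{\mathrm K}(A)$.

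This step is the main obstacle. We must check that each class in finite-coefficient K-theory is realized through a semiprojective finitely presented algebra, so that Theorem~\ref{sr} applies, and that homotopic maps induce equal maps on $\underline{\mathrm K}$. We would first try the standard Dadarlat--Loring description of $\K_*(A;\mathbb Z_n)$ via maps out of dimension drop algebras $\widetilde{\mathbb I}_n$ (and their suspensions), which are semiprojective. If that route is awkward, a fallback is to use that the asymptotic agreement gives equality in $\mathrm{KL}$, hence on $\underline{\mathrm K}$, by the UMCT.

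\emph{Step 3 (contradiction).} Set $\Gamma=\underline{\mathrm K}(\xi)$. Because $\Gamma$ comes from a $*$-automorphism of $A$, it is a $\Lambda$-automorphism of $\underline{\mathrm K}(A)$: it commutes with $\rho,\beta,\kappa$ by naturality, as noted after Theorem~\ref{thm:UCT}. It is also positive and unital, since $\xi$ is a unital automorphism and so preserves the Dadarlat--Gong cone and $[1_A]$. Step 2 gives $\Phi_1\circ\Gamma=\Phi_0$, which contradicts Theorem~\ref{thm:no-global}. Hence $E_0\not\cong E_1$.
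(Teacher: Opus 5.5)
Your proposal is correct and follows the paper's proof step for step. You use the fiberwise form from Lemma~\ref{lem:fiberwise-form}, Loring's stable-relations result (Theorem~\ref{sr}) to turn $\|\xi_m\varphi_0(a)-\varphi_1\xi(a)\|\to0$ into an equality on $\underline{\mathrm K}$, and the triviality $\underline{\mathrm K}(\xi_m)=\id$ from Lemma~\ref{lem:B-automorphisms-trivial}, so that $\Phi_1\circ\underline{\mathrm K}(\xi)=\Phi_0$ contradicts Theorem~\ref{thm:no-global}; your element-by-element choice of $m$ is slightly more careful than the paper's ``for all sufficiently large $m$'' at fixed $n$ (since $\K_*(A;\mathbb Z_n)$ is not finitely generated), and it suffices because $\underline{\mathrm K}(\xi_m)=\id$ for every $m$.
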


\begin{proof}
Suppose that $\Xi:E_0\longrightarrow E_1$
is an isomorphism.  By Lemma~\ref{lem:fiberwise-form}, there exist
automorphisms $\xi$ and $\xi_m$ satisfying
\eqref{eq:asymptotic-fiber-relation}.

Fix an integer $n$.  Consider the unsuspended E-theory in \cite{DL1994}, we have
$$\K_*(A;\mathbb Z_n)\cong \K\K(\mathbb{I}_n,A\otimes C(\mathbb T))=[\mathbb{I}_n,A\otimes C(\mathbb T)\otimes \mathcal{K}].$$
Then point-norm convergence \eqref{eq:asymptotic-fiber-relation} and Theorem \ref{sr} imply
$$
 \K_*(\xi_m;\mathbb Z_n)\circ(\Phi_0)_n^*
 =
 (\Phi_1)_n^*\circ
 \K_*(\xi;\mathbb Z_n).
$$
 for all sufficiently large $m$.
By Lemma~\ref{lem:B-automorphisms-trivial},
$$
 \K_*(\xi_m;\mathbb Z_n)=\id.
$$
Hence
$$
 (\Phi_1)_n^*\circ
 \K_*(\xi;\mathbb Z_n)
 =
 (\Phi_0)_n^*
$$
for every $n$.  The same argument on ordinary K-theory gives
$$
 (\Phi_1)_0^*\circ
 \K_*(\xi)
 =
 (\Phi_0)_0^*.
$$

Since $\xi$ is a $*$-automorphism, the map
$
 \Gamma=\underline{\mathrm{K}}(\xi)
$
is an ordered scaled $\Lambda$-automorphism of $\underline{\mathrm{K}}(A)$.
The preceding equalities give
$
 \Phi_1\Gamma=\Phi_0,
$
contradicting Theorem~\ref{thm:no-global}.  Therefore
$
 E_0\not\cong E_1.
$
\end{proof}

\begin{proof}[Proof of Theorem~\ref{thm:main}]
Proposition~\ref{prop:reduced-invariant-iso} gives
$$
 \bigl(
 \underline{\mathrm{K}}(E_0),\underline{\mathrm{K}}(E_0)_+,[1_{E_0}]
 \bigr)_{\underline{\mathrm{K}}_{\langle\kappa\rangle}}
 \cong
 \bigl(
 \underline{\mathrm{K}}(E_1),\underline{\mathrm{K}}(E_1)_+,[1_{E_1}]
 \bigr)_{\underline{\mathrm{K}}_{\langle\kappa\rangle}}.
$$
Theorem~\ref{thm:E-not-isomorphic} gives
$$
 E_0\not\cong E_1.
$$
Since ordered scaled total K-theory with the full $\Lambda$-module
structure classifies  AD algebras of real rank zero, the full invariants
cannot be isomorphic.  
\end{proof}

\section*{Acknowledgements}
The authors gratefully acknowledge ChatGPT 5.6 sol for its initial suggestion on constructing the shear. However, the arguments and proof outlines generated by the tool were incorrect. All mathematical content was therefore carefully reviewed and substantially revised by the authors, who bear full responsibility for the final version.

\end{document}